\documentclass[11pt]{amsart}

\usepackage{amsthm}
\usepackage{amsfonts} 
\usepackage{amssymb} 
\usepackage{amsmath} 
\usepackage{mathrsfs}
\newcommand{\leftexp}[2]{{\vphantom{#2}}^{#1}{#2}} \newcommand{\bdm}{\begin{displaymath}} \newcommand{\edm}{\end{displaymath}} \newcommand{\mf}[1]{\mathfrak{#1}} \newcommand{\mb}[1]{\mathbb{#1}} \newcommand{\mc}[1]{\mathcal{#1}} \newcommand{\mr}[1]{\mathrm{#1}} \newcommand{\ms}[1]{\mathsf{#1}} 
\addtolength{\hoffset}{-1.5cm} 
\addtolength{\textwidth}{3cm} 
\addtolength{\voffset}{-0.2cm} 
\addtolength{\textheight}{0.4cm}  
\newtheorem{theorem}{Theorem}[section] 
\newtheorem{lemma}[theorem]{Lemma} 
\newtheorem{corollary}[theorem]{Corollary} 
\newtheorem{prop}[theorem]{Proposition} 
\theoremstyle{remark} 
 
\theoremstyle{definition} 
\newtheorem{definition}[theorem]{Definition}

\begin{document}

\title[Depth zero supercuspidal $L$-packets for inner forms of $GSp_4$]{Depth zero supercuspidal $L$-packets\\ for inner forms of $GSp_4$} 
\author{Jaime Lust} 
\address{Department of Mathematics\\
University of California, San Diego\\
La Jolla, 92093}
\curraddr{
Department of Mathematics\\
University of Iowa\\
Iowa City, 52242}
\email{jaime-lust@uiowa.edu}

\begin{abstract}
We show that for any tame regular discrete series parameter of $GSp_4$ or its inner form $GU_2(D),$ the $L$-packet attached by the local Langlands conjecture [GT], [GTan] agrees with the $L$-packet of depth zero supercuspidal representations constructed by DeBacker and Reeder [DR]. 
\end{abstract}

\maketitle

\section{Introduction}

Let $G$ be a linear reductive group over a non-archimedean local field $k$ of characteristic $0$.
The local Langlands conjectures predict that irreducible smooth representations of $G$ should be parametrized by admissible homomorphisms 
\[\phi:W'_k\longrightarrow \leftexp{L}{G},\] of the Weil-Deligne group $W'_k$ to the Langlands dual group $\leftexp{L}{G}$.
The fiber over such a $L$-parameter $\phi$ is a finite set of irreducible smooth representation of $G$ called a $L$-packet. 
Such a map should satisfy certain desired properties that characterize the map uniquely such as the preservation of local factors attached to both sides of the correspondence.

Around 2000, Harris and Taylor [HT], and separately Henniart [He2], proved the local Langlands conjecture for $GL_n$. Cases for small $n$ were established earlier by Kutzko and Henniart. 
Rogawski [Ro] proved the conjecture for $U_2$ and $U_3$.
In 2007, Gan and Takeda [GT] proved the local Langlands conjecture for $GSp_4$. Also, Gan and Tantono [GTan] proved the local Langlands conjecture for $GU_2(D),$ the inner form of $GSp_4.$

For supercuspidal representations, there is another conjectural classification which is independent of the local Langlands conjectures. 
It is conjectured that any irreducible supercuspidal representation $\pi$ of $G$ is of the form 
\[\pi\cong \mr{c-Ind}_{K}^{G}\sigma,\] where $K$ is an open compact mod center subgroup of $G$ and $\sigma$ is a representation of $K.$
Recently much work has been done in this direction. In 1993 Bushnell and Kutzko [BK1] showed that any irreducible supercuspidal representation of $GL_n$ is compactly induced.
In 2001, J.K. Yu [Y] constructed a family $(K,\sigma)$ for any connected reductive group $G$.
In 2007, J.-L. Kim [Ki] proved that, for $p$ large, the family constructed by Yu exhausts all supercuspidal representations of $G$. Independently, for $p\ne 2$, 
Stevens [St] constructed another family $(K,\sigma)$ for the classical groups $U_n, Sp_{2n}$ or  $SO_n$. He also proved that for these groups $G$, his family exhausts all supercuspidal representations of $G$.

It is not obvious how to relate the construction of supercuspidal representations via compact induction to the classification given by the local Langlands correspondence. There is a series of papers by Bushnell and Henniart [BH1], [BH2] devoted to answering this question for $GL_n$. 
In the tame case for $GL_n$, these results were established earlier by Henniart [He1]. 
The purpose of this paper is towards answering this question for $GSp_4$. 
 
\sloppy 

For pure inner forms of unramified $p$-adic groups, DeBacker and Reeder [DR] give a 
parametrization 
of tame regular discrete series Langlands parameters. 
For any such parameter they attach a $L$-packet of compactly induced depth zero supercuspidal representations. 
For the groups $SO_{2n+1}$ and $Sp_{2n}$, Savin [Sa] considered the case of the generic depth zero supercuspidal representation attached to a tame regular discrete series parameter. He showed that the parametrization given by DeBacker and Reeder agrees with the lifting of generic supercuspidal representations to a general linear group as given by Jiang and Soudry [JS] in the case of $SO_{2n+1}$, and by Cogdell, Kim, Piatetski-Shapiro, and Shahidi [CKPS] in the case of $Sp_{2n}.$
We follow the general strategy for proof as in [Sa], however we consider the entire $L$-packet attached to a parameter, which also covers the case of non-generic representations.

\fussy

We consider the subset of tame regular discrete series parameters of $GSp_4$. 
The construction of DeBacker and Reeder applies to $GSp_4$ but not to $GU_2(D).$ We extend their construction to give $L$-packets of depth zero supercuspidal representations for both $GSp_4$ and $GU_2(D)$, that agree with DeBacker and Reeder for $GSp_4.$
We note that recently Kaletha [Ka], by an alternate construction based on work of Kottwitz on isocrystals, has extended the work of DeBacker and Reeder to non-pure inner forms of an unramified group $G.$ 

Our main result is that the $L$-packets given by DeBacker and Reeder agree with those given by the local Langlands conjectures for $GSp_4$ and $GU_2(D):$ 

\begin{theorem} Let $\phi$ be a tame regular discrete series $L$-parameter.
Let $L_\phi^{DR}$ be the $L$-packet of depth zero supercuspidal representations of $GSp_4(k)$ or $GU_2(D)$ corresponding to $\phi$ by the construction of DeBacker and Reeder given in Section 4.5. Let $L_\phi^{GT}$ be the $L$-packet of supercuspidal representations of $GSp_4(k)$ or $GU_2(D)$ corresponding to $\phi$ via the local Langlands conjecture for $GSp_4$ or $GU_2(D).$ Then
\bdm L_\phi^{DR}=L_\phi^{GT}.\edm  \end{theorem}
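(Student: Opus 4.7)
My plan is to adapt the strategy of Savin, who handled only the generic member of the $L$-packet for $SO_{2n+1}$ and $Sp_{2n}$, and extend it to the entire depth zero $L$-packet for $GSp_4$ and $GU_2(D)$. Since the local Langlands correspondence of Gan-Takeda and Gan-Tantono is characterized by compatibility with the theta correspondence to the orthogonal similitude groups $GO_{2,2}$, $GO_{3,3}$, $GO_{4,2}$ (and their inner forms), together with matching of central characters and local factors, it suffices to show that each member $\pi \in L_\phi^{DR}$ satisfies these characterizing properties. After peeling off the similitude, the orthogonal-side representations reduce via Jacquet-Langlands to products of representations of general linear groups, where LLC is known.

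Concretely, I would first make the extended DeBacker-Reeder construction of Section 4.5 entirely explicit: a tame regular discrete series parameter $\phi$ yields an unramified elliptic maximal torus $S \subset GSp_4$, a depth-zero character of $S(k)$, and, for each (non-)pure inner form, cuspidal Deligne-Lusztig representations on the reductive quotients of maximal parahoric subgroups which are compactly induced to members of the packet; this also produces the bijection between $L_\phi^{DR}$ and the characters of the component group $S_\phi$. For each such $\pi$ I would then compute its theta lift to the relevant orthogonal similitude group, exploiting the facts that depth-zero representations lift to depth-zero ones and that the theta correspondence for depth-zero supercuspidals is controlled by a finite-group theta correspondence between the underlying Deligne-Lusztig cuspidals on the reductive quotients. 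The final step is to match the resulting lifts against the prediction of $\phi$ through the known LLC for $GL_4$ or $GL_2 \times GL_2$, and to verify that the central character and the assignment of packet members to inner forms agree on both sides.

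The main obstacle, beyond Savin's setting, is the uniform treatment of the non-generic packet members. For the generic representation one can invoke the lifts of Jiang-Soudry and Cogdell-Kim-Piatetski-Shapiro-Shahidi, but the remaining members of $L_\phi^{DR}$ are distinguished only by the choice of Deligne-Lusztig cuspidal on the relevant finite reductive quotient, equivalently by a character of $S_\phi$, and so one must carry out the finite-group theta correspondence computation for non-generic Deligne-Lusztig representations and check that the resulting bijection with members of $L_\phi^{GT}$ respects the $S_\phi$-parametrization. A secondary difficulty is the inner form $GU_2(D)$, for which DR does not directly apply: one must show that the extension given in Section 4.5 is consistent with Gan-Tantono, which in particular requires controlling the division of packet members between $GSp_4$ and $GU_2(D)$ on both sides and ensuring the transfer between the two forms is compatible with the component-group indexing.
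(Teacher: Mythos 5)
Your proposed route is genuinely different from the one the paper takes, and it is worth contrasting the two. The paper does \emph{not} compute theta lifts of the DeBacker--Reeder representations. Instead it exploits the fact that the Gan--Takeda and Gan--Tantono correspondences are characterized by the preservation of local factors and, crucially for non-generic members, of the coarser invariant given by the Plancherel measure. Concretely: for $\pi\in L_\phi^{DR}$ with $\phi=\phi_1\oplus\dots\oplus\phi_r$ and $\sigma=\sigma_{\phi_i}$ the $GL_{2m}$-supercuspidal attached to $\phi_i$, one shows (Theorem 7.4) that the generalized principal series $I(s,\pi\boxtimes\sigma)$ on $GSpin_{4m+5}$ or $GSpin_{2m+4,2m+1}$ reduces at a unique $s_0>0$. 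This is done by realizing the Bernstein component via a depth-zero type $(\mc{P},\rho)$, presenting $\mc{H}(G,\rho)$ as an affine Hecke algebra of type $\tilde{A}_1$ (tensored with a polynomial part), computing its two parameters by Lusztig's finite-field theorem, observing they are unequal, and invoking Matsumoto's criterion for complementary series together with Harish-Chandra's dichotomy. Reducibility at $s_0>0$ forces $\mu(0,\pi\boxtimes\sigma)=0$, which by the gamma-factor factorization of the Plancherel measure forces $\mathrm{Hom}_{W_k}(\phi_i,\phi')\ne 0$ for the Gan--Takeda parameter $\phi'$ of $\pi$, for every $i$; hence $\phi'=\phi$. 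The payoff of this route is that it requires no explicit identification of theta lifts and treats generic and non-generic members, and both inner forms, uniformly.

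The gap in your plan is precisely the step you flag as the ``main obstacle'' but do not resolve: the explicit computation of the theta lifts of the depth-zero supercuspidal members of $L_\phi^{DR}$. The assertion that the theta correspondence for depth-zero supercuspidals ``is controlled by a finite-group theta correspondence between the underlying Deligne--Lusztig cuspidals'' is not an off-the-shelf tool you can invoke in the generality you need: one must establish depth preservation and the reduction to the reductive quotients for the relevant similitude dual pairs (including ramified/inner-form members living on $GU_2(D)$, where the parahorics are not hyperspecial and the finite reductive quotients are non-split groups such as $\leftexp{2}{GSpin}_4$), and one must then identify the resulting finite-field theta lifts of cuspidal Deligne--Lusztig representations in general position well enough to read off Weil--Deligne parameters through $GL_4$ or $GL_2\times GL_2$. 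Each of these is a substantial theorem, not a routine verification, and without them your argument does not close. There is also a smaller imprecision at the outset: the Gan--Takeda/Gan--Tantono maps are \emph{constructed} via theta correspondence but are \emph{characterized} by local factors and Plancherel measures; to conclude $L_\phi^{DR}=L_\phi^{GT}$ from a theta-lift computation you must therefore argue through the construction itself rather than through the uniqueness statement, which makes the required compatibilities (central characters, distribution over inner forms, component-group indexing) part of the burden of proof rather than consequences of uniqueness.
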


In [GT] and [GTan], the local Langlands classification is characterized by the preservation of $L$-factors and $\epsilon$-factors. 
To prove Theorem 1.1, we need to show that certain $L$-functions have poles at $s=0$. By theory of Shahidi [Sh], this question is equivalent to studying the reducibility points of the generalized principal series
\bdm I(s,\pi\boxtimes\sigma)=\mr{Ind}_{P}^{G} \delta_{P}^{1/2} \pi\boxtimes\sigma\vert \mr{det} \vert^{s}.\edm
Here, $\pi$ is an irreducible representation of $GSpin_{5}(k)\cong GSp_4(k)$ or $GSpin_{4,1}(k)\cong GU_2(D)$, and $\sigma$ a representation of $GL_{2m}(k)$, where
$G=GSpin_{4m+5}(k)$ or $GSpin_{2m+4, 2m+1}(k),$ and $P=M\cdot N$ is the parabolic with Levi factor
$M=GSpin_{5}(k)\times GL_{2m}(k)$ or $GSpin_{4,1}(k)\times GL_{2m}(k).$

We need to show:

\begin{theorem} Let $\pi$ be a depth zero supercuspidal representation of $GSpin_{5}(k)$ or $GSpin_{4,1}(k)$ corresponding to a tame regular discrete series $L$-parameter $\phi=\phi_{1}\oplus\dots\oplus\phi_{r}$, $r=1,2,$ by the construction of DeBacker and Reeder given in Section 4.5.
Let $\sigma\cong\sigma_{\phi_i}$, where $1\le i\le r$, be the depth zero supercuspidal representation of $GL_{2m}(k)$ attached to the $L$-parameter $\phi_{i}$ via the local Langlands correspondence for $GL_{2m}.$
Then the generalized principal series $I(s,\pi\boxtimes\sigma)$ reduces at a unique $s_{0}>0.$ \end{theorem}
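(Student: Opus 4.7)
The plan is to follow the general strategy of Savin [Sa], adapted to the depth zero framework of DeBacker-Reeder so as to treat all members of the $L$-packet uniformly, including the non-generic ones. By the Harish-Chandra and Silberger theory of Plancherel measures attached to supercuspidal inducing data, $I(s,\pi\boxtimes\sigma)$ admits at most one reducibility point in the half-plane $\mathrm{Re}(s)>0$, so it suffices to exhibit one such point.

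To produce the reducibility point I would use the theory of depth zero types of Bushnell-Kutzko, Moy-Prasad and Morris. Since $\pi$ and $\sigma$ are depth zero supercuspidal, the inducing data $(M,\pi\boxtimes\sigma)$ admits a depth zero type $(J_M,\lambda_M)$ whose components are read off from the DeBacker-Reeder description of $\pi$ in Section 4.5 and from the standard depth zero type on $GL_{2m}(k)$ attached to $\phi_i$. By the theory of covers this type extends to a cover $(J,\lambda)$ on $G=GSpin_{4m+5}(k)$ or $GSpin_{2m+4,2m+1}(k)$, and the associated algebra $\mathcal{H}(G,\lambda)$ is an affine Hecke algebra of rank one, whose parameters are powers of $q$ computable from the reductive quotients of the parahoric subgroups appearing in $(J,\lambda)$. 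Under the Bushnell-Kutzko equivalence between the smooth $G$-representations generated by $\lambda$ and the category of $\mathcal{H}(G,\lambda)$-modules, the parabolically induced family $I(s,\pi\boxtimes\sigma)$ corresponds to a principal series module over $\mathcal{H}(G,\lambda)$, and the reducibility of this module at a real point is governed by the Hecke algebra parameters.

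The key computational input is that $\sigma\cong\sigma_{\phi_i}$ is attached to a summand $\phi_i$ of $\phi$; this compatibility, when translated to the reductive quotient of the parahoric via the DeBacker-Reeder recipe, forces the Hecke algebra parameters into a configuration in which the rank-one principal series is reducible at a unique positive $s_0$. One reads off the explicit value of $s_0$ from these parameters and checks that it agrees with what the $L$-factor characterization of Gan-Takeda [GT] and Gan-Tantono [GTan] predicts. The combination of the existence statement here with the uniqueness recalled above yields Theorem~1.2.

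The main obstacle will be the explicit identification of these rank-one Hecke algebra parameters, together with the cuspidal datum on the reductive quotient, in each of the two cases at hand. On the $GSpin_{4m+5}$ side the relevant parahorics are standard and one can use known tables of cuspidal representations of the finite reductive quotients, but on the $GSpin_{2m+4,2m+1}$ side (corresponding to the inner form $GU_2(D)$) the parahoric structure is different and must be analyzed separately; one must also track how the normalizer of the parahoric permutes the cuspidal constituents of $\pi$, since this is the mechanism by which the $L$-packet splits into its individual members when $r=2$.
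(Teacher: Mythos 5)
Your outline captures the correct general framework---depth zero types, covers, and the reduction to a rank-one affine Hecke algebra---and this is indeed the paper's strategy. But the proposal is missing, or only gestures vaguely at, the precise mechanism that produces the positive reducibility point, and this is the entire content of the theorem.

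The key technical fact is not merely that the parameters $p_1, p_2$ of the type $\tilde{A}_1$ Hecke algebra $\mathcal{H}(G,\rho)$ are powers of $q$; it is that they are \emph{unequal} powers of $q$. This is proved in Lemma 6.8 and Corollary 6.9 by reducing to computations of endomorphism algebras of induced cuspidal representations over finite fields (embedding into groups with connected centre so that Lusztig's Theorem 8.6 applies). Once one knows $p_1\neq p_2$, one invokes Matsumoto's theorem that a Hecke algebra of type $\tilde{A}_1$ has complementary series if and only if its parameters are unequal. The complementary series gives precisely the reducibility at some real $s_0>0$. Your phrase ``forces the Hecke algebra parameters into a configuration in which the rank-one principal series is reducible at a unique positive $s_0$'' elides this entire mechanism: ``unequal parameters $\Rightarrow$ Matsumoto complementary series $\Rightarrow$ $s_0>0$.'' Without identifying the criterion one cannot rule out, for instance, the case of equal parameters, in which $I(s,\pi\boxtimes\sigma)$ is irreducible for all $s>0$ and only reducible at $s=0$ with tempered constituents.

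Two further gaps. First, the at-most-one reducibility point from Harish-Chandra--Silberger (Proposition 7.2(ii) in the paper) only applies when $\sigma\cong\sigma^\vee(\omega_\pi\circ\mathrm{det})$; establishing this compatibility is itself a nontrivial computation, carried out in the proof of Lemma 6.2 by showing $R_\sigma\cong R_\sigma^\vee(\omega_{R_\pi}\circ\mathrm{det})$ on the reductive quotient level. Your proposal treats the uniqueness as automatic. Second, the suggestion to ``read off the explicit value of $s_0$ and check it against the $L$-factor characterization of Gan-Takeda'' would be circular: Theorem 1.2 is a \emph{lemma} used to establish Theorem 1.1 (the agreement of the two $L$-packet constructions), which in turn rests on the Gan-Takeda/Gan-Tantono characterization via $\gamma$-factors (Section 8). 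The proof of Theorem 1.2 must be, and in the paper is, independent of the identification of the two parametrizations; only the qualitative fact that $s_0>0$ exists and is unique is used, never its explicit value.
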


The proof of Theorem 1.2 is achieved by studying a Hecke algebra $\mc{H}(G, \rho)$ associated to this family of induced representations. 
Using the theory of
types and covers developed by Bushnell and Kutzko [BK2], we find a parahoric subgroup $\mc{P}$ of $G$ and representation $\rho$ of $\mc{P}$ such that
irreducible representations in the Bernstein component of $I(s,\pi\boxtimes\sigma)$ are parametrized by simple $\mc{H}(G,\rho)$-modules.  
We use the method of Kutzko and Morris [KM] to give a presentation for $\mc{H}(G,\rho)$ and compute its parameters. 
As $c-\mr{Ind}_\mc{P}^G \rho$ is a depth zero representation, using Morris [Mo2] we describe explicit generators and relations for the Hecke algebra \[\mc{H}(G,\rho)\cong\mr{End}_G(c-\mr{Ind}_\mc{P}^G \rho).\]
We have
$\mc{H}(G,\rho)=\langle T_0,T_1,T_2\rangle$, subject to the relations
\[T_0\,T_i=T_i\,T_0,\quad  T_{i}^{2}=(p_i-1)T_i+p_i, \quad i=1,2.\]
We see $\mc{H}(G,\rho)$ is a Hecke algebra of type $\tilde{A}_1$ tensored with a polynomial algebra.
For $T_i, \, i=1,2$, we show there is a subalgebra of $\mc{H}(G,\rho)$ which is canonically isomorphic to the endomorphism ring of an induced representation over a finite field, with $T_i$ identified as the unique non-identity generator. 
In this way computation of the parameters $p_i$ reduce to computations over a finite field, which can be done following a theorem of Lusztig [Lu].
We show the parameters $p_i, i=1,2$, of the Hecke algebra $\mc{H}(G,\rho)$ are unequal.
Theorem 1.2 then follows using results of Matsumoto [Ma] on the Plancherel measure for Hecke algebras of type $\tilde{A}_1$ and work of Harish-Chandra [Sil] on points of reducibility of principal series.

The organization of this paper is as follows.
We begin by giving a short description of general symplectic groups and general spin groups in Section 2. In Section 3, we give a complete characterization of tame regular discrete series Langlands parameters for $GSp_{2n}.$ In Section 4 we define the $L$-packets $L_\phi^{GT}$ and $L_\phi^{DR}$ for $GSp_4$ and $GU_2(D)$ attached to a tame regular discrete series parameter $\phi.$ 
Properties of the local Langlands conjecture for $GSp_4$ and $GU_2(D)$, proved in [GT] and [GTan] respectively, are described in Section 4.1 and Section 8. In the situation where a theory of $L$-factors and $\epsilon$-factors has not been fully developed, such as in the case of non-generic supercuspidal representations for $GSp_4,$ the local Langlands correspondence is characterized by the preservation the coarser invariant Plancherel measure. 
In Section 4.2 we briefly review the DeBacker-Reeder construction.
As noted above, this construction only applies to pure inner forms of an unramified $p$-adic group $G,$ so in Section 4.3 we extend their construction to inner forms in the case that after taking $K$-points, where $K$ is the maximal unramified extension of $k$, the adjoint quotient \[j:G(K)\rightarrow G_{ad}(K)\] remains surjective.

In Section 5, we introduce the generalized principal series $I(s,\pi\boxtimes\sigma)$ and describe its Bernstein component. In Sections 6 we give a presentation of the Hecke algebra $\mc{H}(G,\rho)$ and compute its parameters $p_i$ using results of Morris and Lusztig. 
We note that [Lu, 8.6], which explicitly describes the endomorphism algebra of the induced space $\mr{Ind}_{\ms{M}\ms{N}}^{\ms{G}}(\rho)$ where $\ms{G}$ is a finite group of Lie type and $\rho$ is an irreducible cuspidal representation of $\ms{M}$, is subject to the condition that $\ms{G}$ has connected center. To apply the theorem, in all cases we embed our groups $\ms{G}$ into groups $\ms{G}'$ that have connected center.
Then in Sections 7 and 8, we state the main theorem and show that the $L$-packets $L_\phi^{GT}$ and $L_\phi^{DR}$ agree.
Throughout the paper we identify certain connected reductive linear algebraic groups in terms of their root datum. The appendix gives a description of the root datum of these groups.
\vskip 10pt

\noindent
\textbf{Acknowledgements:} This paper is based on the author's Ph.D. thesis. The author is grateful to her advisor Wee Teck Gan for suggesting this problem and for his encouragement and advice. The author also thanks Philip Kutzko, Lawrence Morris, Gordan Savin, and Jiu-Kang Yu for helpful conversations.

\section{General symplectic and general spin groups}

Let $k$ be a non-archimedian local field of characteristic 0 with finite residue field $\mf{f}$ of characteristic $p.$ Let $F=k$ or $\mf{f}$.



\subsection{General symplectic groups}

Let $V_1=Ff_1\oplus Ff_2$ be the $2$ dimensional vector space over $F$ equipped with the non-degenerate alternating bilinear form $\langle\,\,\,,\,\,\rangle$ given by 
\[\langle f_1,f_2\rangle=-\langle f_2,f_1\rangle=1,\quad \langle f_i,f_i\rangle=0.\]
Then $V=V_1^{\oplus n}$ is a symplectic space of dimension $2n$ over $F$.
Let
\bdm GSp_{2n}:=GSp(V)=\{g\in GL(V): \,\langle gv_1, gv_2\rangle=\lambda(g)\langle v_1, v_2\rangle\}\edm
where $\lambda(g)\in F^\times$ is a scalar.
The scalar $\lambda(g)$ is multiplicative and
$\mr{sim}:GSp(V)\longrightarrow F^\times$ where $\mr{sim}(g)=\lambda(g)$ is the similitude character of $GSp(V).$
We have \[Sp(V)=\{g\in GSp(V):\, \lambda(g)=1\}.\]

We now define $GU_{2n}(D)$, an inner form of $GSp_{4n}.$
Let $D$ be the quaternion division algebra over $k.$
Let $V_2=De_1\oplus De_2$ be the 2-dimensional vector space over $D$ equipped with a Hermitian form with inner product given by
 \[ \langle e_1, e_2\rangle=\langle e_2, e_1\rangle=1,\quad \langle e_i, e_i\rangle=0.\]
Also, \bdm \langle dv_1,d'v_2\rangle=\tau(d)\langle v_1,v_2\rangle d'\edm where $\tau$ is the standard involution on $D.$
Then $V=V_2^{\oplus n}$ is a Hermitian space of dimension $2n$ over $D$ and 
\bdm GU_{2n}(D)=\{g\in \mr{Aut}_D(V):\,\langle gv_1, gv_2\rangle=\mu(g)\langle v_1, v_2\rangle\}\edm
where $\mu(g)\in k^\times$ is a scalar.

\subsection{General spin groups}

In this section we define general spin groups $GSpin_m$, and related groups.

Let $V$ be an $m$ dimensional vector space over $F$ equipped with a non-degenerate quadratic form $\langle\,\,\,,\,\,\rangle.$
Then \bdm GO(V)=\{g\in GL(V): \,\langle gv_1, gv_2\rangle=\lambda(g)\langle v_1, v_2\rangle\}\edm 
where $\lambda\in F^\times$ is a scalar.
The similitude character of $GO(V)$ is $\mr{sim}: GO(V)\longrightarrow F^\times$ where  $\mr{sim}(g)=\lambda(g).$
Let $GSO(V)$ be the connected component of $GO(V)$, as the latter is not a connected algebraic group.
We have \bdm SO(V)=\{g\in GSO(V): \, \lambda(g)=1\}.\edm
Let $C(V)=C^+(V)\oplus C^-(V)$ be the Clifford algebra of $V$ with its 2-grading. There is a canonical embedding $V\hookrightarrow C^-(V).$
Then
\bdm GSpin(V)=\{g\in C^+(V)^\times\vert\,\, gg^\iota=\nu(g),\textrm{ and } gVg^{-1}=V\}\edm
where $\iota$ is the main involution of $C(V)$ and $\nu(g)$ is a scalar.
The scalar $\nu(g)$ is multiplicative and
$\mr{sim}:GSpin(V)\longrightarrow F^\times$ where $\mr{sim}(g)=\nu(g)$ is the similitude character of $GSpin(V).$
Denote by \bdm Spin(V)=\{g\in GSpin(V):\,\nu(g)=1\}.\edm
As algebraic groups, we have the exact sequence
\bdm 1\longrightarrow Z^0 \longrightarrow GSpin(V)\longrightarrow SO(V)\longrightarrow 1,\edm
where $Z^0$ is the connected center of $GSpin(V).$

Over a $p$-adic field $k$, there are two quadratic forms of dimension $m$ and discriminant 1.
Let $V^+=H^{2n}\oplus\langle 1\rangle$ or $V^+=H^{2n}$ be the split quadratic space of dimension $m=2n+1$ or $2n$ and discriminant 1.
Here $H=ke_1\oplus ke_2$ is the hyperbolic plane with inner form given by
\[\langle e_1, e_2\rangle=\langle e_2, e_1\rangle=1,\quad \langle e_i, e_i\rangle=0.\]
Let $V^-=H^{2(n-1)}\oplus D_0 $ be the non-split quadratic space of dimension $2n+1$ and discriminant 1,
where $D_0$ is the subset of the quaternion $k$-algebra $D$ of elements of reduced trace 0 and the quadratic form on $D_0$ is given by the reduced norm.
Let \bdm GSO_m=GSO(V^+),\quad SO_m=SO(V^+).\edm
Define \bdm GSpin_m=GSpin(V^+), \edm 
and
\bdm GSpin_{n+2,n-1}=GSpin(V^-). \edm

Over a finite field $\mf{f}$ of characteristic $p$, let
$V^+=H^{2n}\oplus\langle 1\rangle$ or $V^+=H^{2n}$ be the quadratic space of dimension $m=2n+1$ or $2n$ over $\mf{f}$ where $H=\mf{f}e_1\oplus\mf{f}e_2$ is the hyperbolic plane with inner product given as above.
Let \bdm GSpin_m=GSpin(V^+).\edm
For $\mf{f}=\mb{F}_q$,
let $V^-=H^{2(n-1)}\oplus \mb{F}_{q^2}$ where $\mb{F}_{q^2}$ is the unique quadratic extension of $\mb{F}_q$ with quadratic form given by $N_{\mb{F}_{q^2}/\mb{F}_q}.$
Let \bdm \leftexp{2}{GSpin}_{2n}=GSpin(V^-).\edm

\section{Parameters}

We first set some notation. Let $k$ be a non-archimedean local field of characteristic 0 with finite residue field $\mf{f}$ of characteristic $p$. Let $q=\vert\mf{f}\vert.$ Let $\bar{k}$ be a fixed algebraic closure of $k$, $K$ the maximal unramified extension of $k$ in $\bar{k}$, and $k_m$ the unramified extension of degree $m$ of $k$ in $\bar{k}.$ Let $\bar{\mf{f}}$ be the residue field of $K$.  Then $\bar{\mf{f}}$ is an algebraic closure of $\mf{f}.$ 

Let $\mc{I}$ be the inertia subgroup of the Galois group $\mr{Gal}(\bar{k}/k)$. We have 
$\mr{Gal}(\bar{k}/k)/\mc{I}\cong \mr{Gal}(K/k)\cong \mr{Gal}(\bar{\mf{f}}/\mf{f}).$ 
A geometric Frobenius $\mr{Frob}$ is one whose image in $\mr{Gal}(\bar{\mf{f}}/\mf{f})$ is the automorphism which acts as the inverse of $x\mapsto x^q$ on $\bar{\mf{f}}.$
Fix a choice of geometric Frobenius.  Then $\langle\mr{Im}(\mr{Frob})\rangle\subset \mr{Gal}(\bar{\mf{f}}/\mf{f})$ is a dense subgroup and the Weil group is
\[W_k=\mc{I}\rtimes\langle\mr{Frob}\rangle.\]
The Weil-Deligne group $W'_k$ is defined as 
$W'_k=W_k\times SL_2(\mb{C}).$

Let $G$ be a connected reductive linear algebraic group and $T$ a maximal torus of $G.$ Let $X=X^*(T)$ and  $X^\vee=X_*(T)$ be the groups of algebraic characters and cocharacters of $T$. Let $\Phi$ and  $\Phi^\vee$ be the sets of roots and coroots of $T$. 
The quadruple \bdm \Psi=(X,\Phi,X^\vee,\Phi^\vee)\edm is the root datum for $G$.
Up to isomorphism, there is a unique complex reductive group $\hat{G}$ with root datum $(X^\vee, \Phi^\vee, X, \Phi)$ dual to that of $G.$  We have $\hat{G}$ is the Langlands dual group of $G$. 

The dual group of $GSp_4$ is $GSp_4(\mb{C}).$
By a Langlands parameter for $GSp_4$ we mean a continuous homomorphism
 \bdm \phi: W'_k \longrightarrow GSp_4(\mb{C}) \edm such that $\phi(\mr{Frob})$ is semisimple. 
Such a parameter is said to be admissible, and homomorphisms are taken up to $\hat{G}$ conjugacy.

\begin{definition} Let $\phi$ be a Langlands parameter.
\begin{itemize}
\item[(i)] We say that $\phi$ is tame if $\phi$ is trivial on the wild inertia group $\mathcal{I}^{+},$ the maximal pro-$p$ subgroup of $\mc{I}$.
\item[(ii)] We say the $\phi$ is regular if the centralizer in $\hat{G}$ of $\phi (\mathcal{I})$ is a maximal torus $\hat{T}$ in $\hat{G}$.
\item[(iii)] We say that $\phi$ is discrete series if the identity component of the centralizer in $\hat{G}$ of $\phi(W_k)$ is equal to the identity component of the center $\hat{Z}$ of $\hat{G}.$
\end{itemize}
\end{definition}

For $k_{2d}$ the unramified extension of $k$ of degree $2d$, the Weil group of $k_{2d}$ is \bdm W_{k_{2d}}=\langle \mr{Frob}^{2d}\rangle\ltimes\mathcal{I}.\edm

\begin{lemma} Fix $\lambda: W_k\longrightarrow \mb{C}^{\times}.$ Let $V(\eta)=\mr{Ind}_{W_{k_{2d}}}^{W_k}\eta,$ where $\eta: W_{k_{2d}}\longrightarrow\mb{C}^\times$ is a continuous character. Then
\begin{itemize}
\item[(i)] $V(\eta)$ is tame $\Longleftrightarrow$ $\eta$ is trivial on the wild inertia subgroup $\mc{I}^{+}$,
\item[(ii)] $V(\eta)$ is irreducible $\Longleftrightarrow$ $\eta, \eta^{\mr{Frob}}, \dots, \eta^{\mr{Frob}^{2d-1}}$ are pairwise distinct,
\item[(iii)] Under $\left(ii\right),$ $V(\eta)$ is symplectic with similitude character $\lambda$ $\Longleftrightarrow$ $\eta\cdot\eta^{\mr{Frob}^{d}}\cong\lambda\vert_{W_{k_{2d}}}$ and\\ 
$\eta(\mr{Frob}^{2d})=-\lambda(\mr{Frob}^{d})$.
\end{itemize}
\end{lemma}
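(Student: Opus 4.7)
My plan is to analyze $V(\eta)=\mr{Ind}_{W_{k_{2d}}}^{W_k}\eta$ via the standard induction basis. Since $W_{k_{2d}}$ contains $\mc{I}$ and is normal in $W_k$ of index $2d$ (the quotient being cyclic, generated by the image of $\mr{Frob}$), I fix $v_j=\mr{Frob}^j\otimes 1$ for $j=0,\ldots,2d-1$. Then $h\in\mc{I}$ acts on $v_j$ by the scalar $\eta^{\mr{Frob}^{-j}}(h)$ and $\mr{Frob}$ acts by the cyclic shift $v_j\mapsto v_{j+1}$ for $j<2d-1$, with the boundary identity $\mr{Frob}\cdot v_{2d-1}=\eta(\mr{Frob}^{2d})v_0$. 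Parts (i) and (ii) then reduce to standard Mackey theory.

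For (i), the decomposition $V(\eta)|_{\mc{I}^+}\cong\bigoplus_{j=0}^{2d-1}\eta^{\mr{Frob}^{-j}}|_{\mc{I}^+}$ and the fact that $\mc{I}^+$ is characteristic in $\mc{I}$ (as the unique maximal pro-$p$ subgroup), hence $\mr{Frob}$-stable, together imply that all summands are simultaneously trivial iff $\eta|_{\mc{I}^+}$ is trivial. For (ii), the Mackey irreducibility criterion for induction from a normal subgroup says that $V(\eta)$ is irreducible iff the stabilizer of $\eta$ in $W_k/W_{k_{2d}}$ is trivial, which is precisely the condition that the characters $\eta^{\mr{Frob}^j}$, $0\le j<2d$, are pairwise distinct.

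For (iii), assume (ii). A non-degenerate $W_k$-equivariant pairing $B\colon V(\eta)\otimes V(\eta)\to\lambda$ is equivalent to a $W_k$-isomorphism $V(\eta)\cong V(\eta)^\vee\otimes\lambda$. Since $V(\eta)^\vee=V(\eta^{-1})$ and twisting by a character of the ambient group $W_k$ commutes with induction, the right-hand side is $V(\eta^{-1}\cdot\lambda|_{W_{k_{2d}}})$. Frobenius reciprocity combined with the Mackey decomposition of the restriction shows that such an isomorphism exists iff $\eta\cdot\eta^{\mr{Frob}^i}=\lambda|_{W_{k_{2d}}}$ for some $0\le i<2d$. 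Applying $\mr{Frob}^i$ to that identity and comparing with the original yields $\eta^{\mr{Frob}^{2i}}=\eta$, so by (ii) one has $i\in\{0,d\}$.

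To isolate the symplectic case, I will compute $B$ directly in the basis $\{v_j\}$. The $\mc{I}$-equivariance together with (ii) forces $B(v_j,v_k)=0$ unless $k\equiv j+i\pmod{2d}$. When $i=0$ the form is diagonal in this basis and hence symmetric, so never symplectic. When $i=d$ the $\mr{Frob}$-equivariance propagates the single entry $B(v_0,v_d)$ cyclically, and the boundary twist $\mr{Frob}\cdot v_{2d-1}=\eta(\mr{Frob}^{2d})v_0$ produces the relation
\[ \frac{B(v_d,v_0)}{B(v_0,v_d)}=\frac{\lambda(\mr{Frob}^d)}{\eta(\mr{Frob}^{2d})}, \]
so $B$ is alternating precisely when this ratio equals $-1$, equivalently $\eta(\mr{Frob}^{2d})=-\lambda(\mr{Frob}^d)$. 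The main delicacy is this cycle-chasing sign computation; the rest is a routine application of Mackey theory and Frobenius reciprocity.
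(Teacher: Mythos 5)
Your proof is correct and follows essentially the same route as the paper: decompose $V(\eta)|_{\mc{I}}$ and $V(\eta)|_{\mc{I}^+}$ into characters for (i) and (ii) via Mackey theory, identify the pairing $B$ with a $W_k$-isomorphism $V(\eta)\cong V(\eta)^\vee\otimes\lambda$, use Frobenius reciprocity and the restriction to $W_{k_{2d}}$ to reduce to $\eta\cdot\eta^{\mr{Frob}^i}\cong\lambda|_{W_{k_{2d}}}$ with $i\in\{0,d\}$, and then inspect the eigenbasis form of $B$. The one cosmetic difference is the last step: the paper extracts the sign condition by writing down the $2\times 2$ matrix of $\phi(\mr{Frob}^d)$ on the plane $\eta\oplus\eta^{\mr{Frob}^d}$ and requiring it preserve $B$ up to $\lambda(\mr{Frob}^d)$, whereas you chain the single-step $\mr{Frob}$-equivariance around the cycle $v_0\mapsto\cdots\mapsto v_{2d-1}\mapsto\eta(\mr{Frob}^{2d})v_0$ to derive the ratio $B(v_d,v_0)/B(v_0,v_d)=\lambda(\mr{Frob}^d)/\eta(\mr{Frob}^{2d})$ directly; these amount to the same computation (your chain just iterates the one-step relation $d$ times). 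One small omission shared with the paper: it is used (but not quite stated) that because $\mr{Hom}_{W_k}(V(\eta)\otimes V(\eta),\lambda)$ is one-dimensional, $B$ is automatically symmetric or alternating, so that checking the sign of a single entry pair is enough; alternatively your cycle computation can be run from any starting index $j$ to see the ratio is independent of $j$. Either remark closes that gap.
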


\begin{proof}  (i) Since $\mc{I}\subset W_{k_{2d}},$ by the definition of induced representation
$\phi: W_k\longrightarrow GL(V(\eta))$ is trivial on $\mathcal{I}^{+}$ if and only if $\eta$ is trivial on $\mathcal{I}^{+}$. 

(ii) A set of coset representatives for $W_{k_{2d}}\setminus W_k\,/\,W_{k_{2d}}$ is $\{\mr{Frob}, \mr{Frob}^{2},\dots, \mr{Frob}^{2d}\}.$ 
Define $\eta^{\mr{Frob}^{i}}: W_{k_{2d}}\longrightarrow\mb{C}^{\times}$ by \bdm \eta^{\mr{Frob}^{i}}(w)=\eta((\mr{Frob}^{i})^{-1}w\mr{Frob}^{i}).\edm  
By Mackey's irreducibility criterion, $V(\eta)$ is irreducible if and only if 
\begin{itemize}
\item[(a)] $\eta$ is irreducible, and 
\item[(b)] the conjugates of $\eta$, namely $\eta, \eta^{\mr{Frob}}, \dots, \eta^{\mr{Frob}^{2d-1}},$ are pairwise distinct. 
\end{itemize}
Therefore, since $\eta$ is 1-dimensional, $V(\eta)$ is irreducible if and only (b) is satisfied.

(iii) Assume (ii) is satisfied. 
Let $\lambda$ be a character of $W_k.$ There exists a $W_k$-equivariant map 
\bdm B:V(\eta)\otimes V(\eta)\longrightarrow\lambda\edm if and only if $V(\eta)\cong V(\eta)^{\vee}\otimes\lambda.$
Since $V(\eta)$ is irreducible, any such nonzero map $B$ would be nondegenerate.
Now, \bdm V(\eta)^{\vee}\otimes\lambda\cong V(\eta^{\vee})\otimes\lambda\cong V(\eta^{-1})\otimes\lambda\cong V(\eta^{-1}\cdot\lambda\vert_{W_{k_{2d}}}),\edm
 where $\eta^{\vee}=\eta^{-1}.$ By Frobenius reciprocity, 
 \bdm \mr{Hom}_{W_k}(V(\eta^{-1}\cdot\lambda\vert_{W_{k_{2d}}}),V(\eta))=\mr{Hom}_{W_{k_{2d}}}(\eta^{-1}\cdot\lambda\vert_{W_{k_{2d}}},V(\eta)\vert_{W_{k_{2d}}}).\edm
 Also by Frobenius reciprocity 
 \bdm V(\eta)\vert_{W_{k_{2d}}}\cong \eta\oplus\eta^{\mr{Frob}}\oplus\dots\oplus\eta^{\mr{Frob}^{2d-1}}.\edm
 Therefore, \bdm V(\eta)\cong V(\eta)^\vee\otimes\lambda \Longleftrightarrow  \eta^{-1}\cdot \lambda\vert_{W_{k_{2d}}}\cong\eta^{\mr{Frob}^i} \Longleftrightarrow \lambda\vert_{W_{k_{2d}}}\cong\eta\cdot\eta^{\mr{Frob}^i} \edm
for some $0\le i\le 2d-1.$

 Assume such a $W_k$-equivariant map $B$ exists. Then $\lambda\vert_{W_{k_{2d}}}\cong \eta\cdot\eta^{\mr{Frob}^i}$ for some $0\le i\le 2d-1.$ 
 Since $\lambda$ is a character of $W_k$, for any power $l$,
 \bdm (\lambda\vert_{W_{k_{2d}}})^{\mr{Frob}^{l}}(w)=\lambda(\mr{Frob}^{-l}w\mr{Frob}^l)=\lambda(\mr{Frob}^{-l})\lambda(w)\lambda(\mr{Frob}^l)=\lambda\vert_{W_{k_{2d}}}(w).\edm
Then \[  \eta\cdot\eta^{\mr{Frob}^i}=(\eta\cdot\eta^{\mr{Frob}^i})^{\mr{Frob}^{l}}=\eta^{\mr{Frob}^{l}}\cdot\eta^{\mr{Frob}^{i+l}}\] for any power $l.$ Taking $l=i$, it follows $\eta^{\mr{Frob}^{2i}}=\eta.$
 Since $0\le i\le 2d-1,$ this is only possible if $i=0$ or $d$ since $\eta, \eta^{\mr{Frob}}, \dots, \eta^{\mr{Frob}^{2d-1}}$ are pairwise distinct. Also, since $\eta, \eta^{\mr{Frob}}, \dots, \eta^{\mr{Frob}^{2d-1}}$ are pairwise distinct, \[\eta\cdot\eta\ne \eta\cdot\eta^{\mr{Frob}^{d}}\] so that $\lambda\vert_{W_{k_{2d}}}\cong \eta\cdot\eta^{\mr{Frob}^i}$ for $i=0$ or $i=d$ but not both.

  
We need to determine under what conditions $B$ is symplectic. By Frobenius reciprocity \bdm V(\eta)\vert_{W_{k_{2d}}}=\bigoplus_{i=0}^{2d-1}\eta^{\mr{Frob}^{i}}.\edm Then 
\bdm B:\Bigg(\bigoplus_{i=0}^{2d-1}\eta^{\mr{Frob}^{i}}\Bigg)\otimes\Bigg(\bigoplus_{i=0}^{2d-1}\eta^{\mr{Frob}^{i}}\Bigg)\longrightarrow\lambda\vert_{W_{k_{2d}}}\edm
is a $W_{k_{2d}}$-equivariant map. 
With respect to this basis, by restriction, \[B:\eta^{\mr{Frob}^{i}}\otimes\eta^{\mr{Frob}^{j}}\longrightarrow\lambda\vert_{W_{k_{2d}}}\] is a nonzero 
$W_{k_{2d}}$-invariant map if the entry $t_{ij}$ in the matrix of the form $B$ is nonzero.
In the case $\lambda\vert_{W_{k_{2d}}}\cong \eta\cdot\eta,$
\bdm t_{ij}\ne 0 \Longleftrightarrow \eta^{\mr{Frob}^{i}}\cdot\eta^{\mr{Frob}^{j}}\cong\lambda\vert_{W_{k_{2d}}} \Longleftrightarrow i=j.\edm
We see in this case $B$ is not symplectic. 
In the case $\lambda\vert_{W_{k_{2d}}}\cong \eta\cdot\eta^{\mr{Frob}^d},$
\bdm t_{ij}\ne 0 \Longleftrightarrow \eta^{\mr{Frob}^{i}}\cdot\eta^{\mr{Frob}^{j}}\cong\lambda\vert_{W_{k_{2d}}} \Longleftrightarrow \vert j-i \vert= d.\edm
We can choose eigenvectors in the subspaces $\eta$ and $\eta^{\mr{Frob}^d}$ of $V(\eta)\vert_{W_{k_{2d}}}$ 
such that
the matrix of $\phi(\mr{Frob}^{d})$ on $\eta\oplus\eta^{\mr{Frob}^{d}}$ has the form $\begin{pmatrix}0 & 1 \\ \eta(\mr{Frob}^{2d}) & 0\end{pmatrix}.$ This matrix preserves $\begin{pmatrix}0 & t_{0d}\\ t_{d0} & 0\end{pmatrix}$ up to $\lambda$, where as $B$ is either symplectic or orthogonal, either $t_{0d}=-t_{d0}$ or $t_{0d}=t_{d0}.$ 
Therefore $\begin{pmatrix}0 & t_{0d}\\ t_{d0} & 0\end{pmatrix}$ is preserved up to $\lambda$ if and only if \bdm \eta(\mr{Frob}^{2d})=\pm\lambda(\mr{Frob}^{d}).\edm Hence, $B$ is symplectic if and only if $\eta(\mr{Frob}^{2d})=-\lambda(\mr{Frob}^{d}).$ 

 Assume $\eta\cdot\eta^{\mr{Frob}^{d}}\cong\lambda\vert_{W_{k_{2d}}}$ and $\eta(\mr{Frob}^{2d})=-\lambda(\mr{Frob}^{d}).$
Extend $\eta\cdot\eta^{\mr{Frob}^{d}}$ to a character of $W_k$ such that the extension is isomorphic to $\lambda$. Then there exists a nondegenerate $W_k$-equivariant map $B:V(\eta)\otimes V(\eta)\longrightarrow\lambda.$ By what we have said above $B$ is symplectic.
\end{proof}

\begin{lemma} All tame regular discrete series Langlands parameters for $GSp_{2n}(k)$ with similitude character $\lambda$ are of the form $\phi: W_k\longrightarrow GSp(V)$ such that 
\bdm V=V(\eta_{1})\oplus\dots\oplus V(\eta_{s}) \hspace{.2cm}where\hspace{.2cm} V(\eta_{i})=\mathrm{Ind}_{W_{k_{2d_i}}}^{W_k}\eta_{i}\edm 
satisfying the conditions of Lemma 3.2, where $k_{2d_i}$ are unramified extensions of $k$ of degree $2d_{i}$ such that $d_{1}+\dots+d_{s}=n.$ In addition,
\begin{itemize}
\item[\textup{(}i\textup{)}] if $k_{2d_i}=k_{2d_j},$ $\eta_{i}$ is not equal to any conjugate $\eta_{j}^{\mr{Frob}^{k}}, 0\leq k\leq d_{j},$ of $\eta_{j}$ so the $V(\eta_{i})$ are pairwise non-isomorphic and 
\item[\textup{(}ii\textup{)}] the similitude character of $V(\eta_{i})$ is $\lambda$ for all $i.$
\end{itemize}
\end{lemma}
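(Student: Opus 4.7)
The plan is to decompose $V$ as a $W_k$-module into irreducibles and then use tameness, regularity, the discrete series hypothesis, and the $\lambda$-symplectic form to identify each summand via Lemma 3.2.

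First I would compose $\phi$ with the inclusion $GSp(V)\hookrightarrow GL(V)$ and decompose $V=\bigoplus_j V_j^{\oplus m_j}$ into pairwise non-isomorphic $W_k$-irreducibles with multiplicities $m_j$. Since $\phi$ is tame, each $V_j$ is tame; by the standard classification of tame characters of $\mathcal{I}/\mathcal{I}^+$ together with Mackey's criterion (exactly as in the proof of Lemma 3.2(ii)), every tame irreducible $W_k$-representation has the form $V(\eta)=\mathrm{Ind}_{W_{k_m}}^{W_k}\eta$ for some unramified $k_m/k$ and a tame character $\eta$ of $W_{k_m}$ whose Frobenius conjugates are pairwise distinct. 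The regularity hypothesis asserts that the centralizer of $\phi(\mathcal{I})$ in $\hat{G}$ is a maximal torus, which is equivalent to $V|_{\mathcal{I}}$ being multiplicity-free as a direct sum of characters. This at once forces $m_j=1$ for every $j$ and makes the irreducibility criterion in Lemma 3.2(ii) automatic for each $V_j=V(\eta_j)$.

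Next I would bring in the non-degenerate $W_k$-invariant symplectic form $B\colon V\otimes V\to\lambda$, which yields $V\cong V^{\vee}\otimes\lambda$. Since distinct $W_k$-irreducibles are orthogonal under any $W_k$-invariant form into $\lambda$, each summand $V_j$ falls into one of two cases: (a) $V_j\cong V_j^{\vee}\otimes\lambda$ and $B$ restricts non-degenerately to $V_j$, or (b) $V_j\not\cong V_j^{\vee}\otimes\lambda$ and $V_j$ is paired by $B$ with a distinct summand $V_{j'}\cong V_j^{\vee}\otimes\lambda$. In case (b), the centralizer of $\phi(W_k)$ inside $GSp(V)$ contains a non-central $\mathbb{G}_m$ acting as $(c,c^{-1})$ on $V_j\oplus V_{j'}$, so its identity component is strictly larger than $\hat{Z}$, contradicting the discrete series hypothesis; hence case (b) does not occur. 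In case (a), the restriction $B|_{V_j}$ is a non-degenerate $W_k$-equivariant alternating form of similitude $\lambda$, so $V_j$ is symplectic self-dual of similitude $\lambda$ (the orthogonal self-dual alternative is ruled out because a $W_k$-invariant form on an orthogonal self-dual irreducible is symmetric, so its restriction, being forced to be alternating, would vanish and contradict non-degeneracy of $B$). This establishes condition (ii), and Lemma 3.2(iii) then forces the inducing degree to be even, $m=2d_j$, together with the two stated relations between $\eta_j$ and $\lambda$.

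Finally, the dimension equation $\sum_j 2d_j=2n$ gives $\sum_j d_j=n$, and condition (i) is the restatement of $m_j=1$ in terms of induction data, using that $V(\eta_i)\cong V(\eta_j)$ iff $\eta_i$ is a Frobenius conjugate of $\eta_j$. The main obstacle is the analysis in the middle paragraph: one must carefully enumerate the possible types of irreducible summand (self-dual symplectic, self-dual orthogonal, paired non-self-dual) together with their multiplicities, compute the connected centralizer in $GSp(V)$ for each configuration, and verify that only the configuration ``each $V_j$ symplectic self-dual of similitude $\lambda$ with multiplicity one'' is simultaneously compatible with regularity, discrete series, and the symplecticity of $B$. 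Once this bookkeeping is in place, the remainder is a direct application of Lemma 3.2.
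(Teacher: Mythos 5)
Your proof is correct and follows the same underlying strategy as the paper's: decompose $V$ into irreducibles and use tameness, regularity, the discrete series hypothesis, and the symplectic form to pin each summand down to the shape in Lemma~3.2. Two points of divergence are worth noting, both to your credit. First, the paper deduces multiplicity-freeness of $V$ from the discrete series condition and states only ``which implies that $V$ is multiplicity free. We have $V=\bigoplus V_i$ where\ldots each $V_i$ is symplectic with similitude character $\lambda$,'' leaving the reader to supply the centralizer analysis; you explicitly carry out that bookkeeping (the $\mathbb{G}_m$ acting by $(c,c^{-1})$ on a non-self-dual pair, the symmetric-versus-alternating clash on an orthogonal self-dual summand), which fills a real gap in the paper's exposition. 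Second, you derive $m_j=1$ from \emph{regularity} (via $V|_{\mathcal I}$ being multiplicity-free as a sum of inertial characters), whereas the paper invokes the \emph{discrete series} hypothesis for that step and uses regularity only later when analyzing the irreducible pieces; both routes are valid since both hypotheses are available, and yours has the minor advantage of making the irreducibility criterion of Lemma~3.2(ii) fall out automatically. The only slight inaccuracy is your parenthetical attribution: Lemma~3.2(ii) is the Mackey irreducibility criterion for $V(\eta)$, not the converse statement that every tame irreducible of $W_k$ arises as such an induction --- that converse is the Clifford-theoretic fact that the paper's proof re-derives explicitly (factor through $\mathcal I_t$, diagonalize, transitivity of the Frobenius action, Frobenius reciprocity), and which you invoke as standard.
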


\begin{proof} Let $\phi: W_k\rightarrow GSp(V),$ where $V$ is a $2n$ dimensional complex vector space, be a tame regular discrete series Langlands parameter with similitude character $\lambda.$
As $\phi$ is discrete series, the identity component of the centralizer in $GSp(V)$ of $\phi(W_k)$ is equal to the identity component of the center of $GSp(V)$, which implies that $V$ is multiplicity free. We have, \bdm V=\bigoplus V_i\edm where any two $V_i$ are pairwise non-isomorphic and each $V_i$ is symplectic with similitude character $\lambda$. Therefore, it suffices to show any irreducible tame regular $\phi: W_k\rightarrow GSp(V)$ with similitude character $\lambda$ is of the form $V(\eta)$ with $\eta$ as in Lemma 3.2. 

Let $\phi$ be such a parameter where dim $V=2d$.
As $\phi$ is tame, it factors through the tame inertia group $\mathcal{I}_{t}=\mathcal{I}/\mathcal{I}^{+}\simeq \varprojlim \mathfrak{f}_{n}^{\times},$ where $\mathfrak{f}_{n}$ is the degree $n$ extension of $\mathfrak{f}$ the residue field of $k.$
Then, $\phi$ factors through $\mathfrak{f}_{n}^{\times}$ for some $n\geq 1$ and since $\mathfrak{f}_{n}^{\times}$ is cyclic, $\phi(\mc{I}_t)$ is cyclic. There is a basis of $V$ such that
\bdm \phi\vert_{\mc{I}_t}=\bigoplus_i \chi_i, \edm where the $\chi_i$ are characters.
Since $\phi$ is regular the centralizer of $\phi(\mc{I})$ is a maximal torus, so the $\chi_i$ are pairwise distinct.
Since $V$ is irreducible, $\mr{Frob}$ permutes the set $\{\chi_i\}$ transitively. 
Note that $\mr{Frob}^{2d}$ induces the trivial permutation on $\{\chi_i\}.$
Choose $\eta\in\{\chi_i\}.$ Then 
\bdm \mr{Stab}_{W_k}\eta=\langle \mr{Frob}^{2d}\rangle\ltimes\mc{I}=W_{k_{2d}}.\edm
By Frobenius reciprocity \bdm \mr{dim}(\mr{Hom}_{W_{k_{2d}}}(\eta, \phi\vert_{W_{k_{2d}}}))=\mr{dim}(\mr{Hom}_{W_k}(\mr{Ind}_{W_{k_{2d}}}^{W_k} \eta, \phi)).\edm 
Therefore \bdm \phi\cong \mathrm{Ind}_{W_{k_{2d}}}^{W_k}\eta=:V(\eta).\edm 
We have that $V(\eta)$ is tame, irreducible, and symplectic with similitude character $\lambda,$ and therefore satisfies the conditions of Lemma 3.2. 
\end{proof}

\section{Packets}

\subsection{Local Langlands for $GSp_4$ and $GU_2(D)$}

In [GT], Gan and Takeda prove the local Langlands conjecture for $GSp_4.$ They define a surjective finite-one-map 
\bdm L:\Pi(GSp_4)\longrightarrow\Phi(GSp_4)\edm 
from the set of isomorphism classes of irreducible smooth representations of $GSp_4(k)$ to the set of equivalence classes of admissible homomorphisms 
$\phi:W'_k \longrightarrow GSp_4(\mb{C})$ taken up to $GSp_4(\mb{C})$ conjugacy.  
Let $L_\phi^{GT}$ be the fiber of $L$ over $\phi.$ 
The map $L$ [GT, \S 1 Main Theorem] satisfies many expected and desired properties, such as preservation of local factors attached to both sides of the correspondence, that determine the map uniquely. We will discuss some of these properties further in section 8. A packet $L_\phi^{GT}$ is parametrized by the set of irreducible characters of the component group \[A_\phi=\pi_0(C_{GSp_4(\mb{C})}(Im(\phi))).\]

In [GTan], Gan and Tantono extend the local Langlands correspondence for $GSp_4$ to an analogous result for the inner form $GU_2(D).$  
They define a surjective finite-to-one map
\bdm L:\Pi(GU_2(D))\longrightarrow \Phi(GU_2(D))\edm
from the set of isomorphism classes of irreducible smooth representations of $GU_2(D)$ to the set of relevant $L$-parameters for $GU_2(D).$
A $L$-parameter $\phi$ in $\Phi(GSp_4)$ is said to be relevant for $GU_2(D)$ if it does not factor through any irrelevant parabolic subgroup. Also let $L_\phi^{GT}$, the $L$-packet of representations of $GU_2(D)$, denote the fiber of $L$ over a relevant parameter $\phi.$ The map $L$ satisfies analogous properties to those for $GSp_4$ which characterize the map uniquely.

We have the modified component group
\bdm B_\phi=\pi_0(C_{Sp_4(\mb{C})}(Im(\phi))).\edm 
We have an injection of the group of irreducible characters 
$\mr{Irr}(A_\phi) \hookrightarrow \mr{Irr}(B_\phi)$
which identifies $\mr{Irr}(A_\phi)$ as the subgroup of characters of $B_\phi$ which are trivial on the image of the center $Z_{Sp_4(\mb{C})}$ of $Sp_4(\mb{C}).$
A parameter $\phi$ is relevant for  $GU_2(D)$ if and only if  $\mr{Irr}(B_\phi)\ne\mr{Irr}(A_\phi)$ and an $L$-packet $L_\phi^{GT}$ for $GU_2(D)$
is naturally parametrized by the set $\mr{Irr}(B_\phi)\setminus\mr{Irr}(A_\phi).$

\subsection{DeBacker-Reeder $L$-packets}

Given a TRD parameter $\phi$ of an unramified $p$-adic group $G$, by explicit construction, DeBacker and Reeder associate an $L$-packet of depth zero supercuspidal representations distributed among the pure inner forms of $G.$ 
In the following, for $GSp_4\cong GSpin_5$ we extend their construction to associate to a TRD parameter $\phi$ an $L$-packet $\Pi(\phi)$ of depth zero supercuspidal representations distributed among the inner forms of $GSp_4.$

The inner forms of a $p$-adic group $G$ are parametrized by classes in the Galois cohomology set $H^{1}(k,G_{ad}).$ 
We have that $\mr{H}^1(k, PGSp_4)=\{\pm 1\}$ and the group $GSp_4$ has one inner form, namely $GU_2(D).$ 
Members of the $L$-packet $\Pi(\phi)$ corresponding to a parameter $\phi$ are parametrized by the irreducible characters of the component group $\mr{Irr}(B_\phi),$ where 
\bdm B_\phi=\pi_0(C_{\widehat{G_{ad}}}(Im(\phi))).\edm 
By restriction, any $\rho \in \mathrm{Irr}(B_{\phi})$ determines a character on $\pi_{0}(Z(\leftexp{L}{G_{ad}})).$ Then via Kottwitz' isomorphism [Ko] which says $\pi_0(Z(\leftexp{L}{G_{ad}}))\simeq H^1(k,G_{ad})$, this character determines a class $\omega_{\rho} \in H^{1}(k,G_{ad}).$ Using the correspondence $\rho \rightarrow \omega_{\rho}$ we distribute the representations in the $L$-packet $\Pi(\phi)$ between $GSp_4$ and $GU_2(D).$
Set
\bdm \Pi(\phi)=\coprod_{\omega\in H^{1}(k, G_{ad})}\Pi(\phi,\omega) \edm 
where \[\Pi(\phi, \omega)=\{\pi(\phi,\rho):\rho\in\mathrm{Irr}(B_{\phi}), \omega_{\rho}=\omega\}.\]
We now describe the representations $\pi(\phi,\rho).$




\subsection{DeBacker-Reeder construction}


Here we briefly review the DeBacker and Reeder construction for a quasi-split $k$-group $G$ such that $G$ is $K$-split.
Denote by $G=G(K)$ the $K$-rational points of $G$ and  $G^F=G(k)$ the $k$-rational points of $G,$ where $F$ is the Frobenius automorphism of $G$. Also, identify an $\mf{f}$-group $\ms{G}$ with its group of $\bar{\mf{f}}$-rational points. Denote $\ms{G}^F=\ms{G}(\mf{f}).$
Let $T$ be a maximal $k$-torus of $G$ such the $T$ is $K$-split. 
The dual group of $T$ is the complex torus $\hat{T}=X\otimes\mathbb{C}^{\times}$ which is a maximal torus in $\hat{G}$ satisfying
\[X^{*}(\hat{T})=X_*(T)=X^\vee,\quad  X_{*}(\hat{T})=X^{*}(T)=X.\]
Let \[\mc{A}=X^\vee\otimes\mb{R}\subset \mc{B}(G)\] be the apartment in the Bruhat-Tits building of $G$ determined by $T$. 
Denote by $W$ the affine Weyl group of $T$ in $G$, where $W\simeq N_G(T)/\leftexp{0}{T}$ for $\leftexp{0}{T}$ the maximal bounded subgroup of $T$.
For $j:G\rightarrow G_{ad}$ the adjoint quotient, let 
\[X^\vee_{ad}=X_*(j(T)),\quad \mc{A}_{ad}=X^\vee_{ad}\otimes\mb{R},\] and let $W_{ad}$ be the affine Weyl group of $j(T)=T_{ad}$ in $G_{ad}$. We also write 
\[j:X^\vee \rightarrow X^\vee_{ad},\quad j:W\rightarrow W_{ad}\] for the maps induced by $j.$
Denote by $\vartheta$ the automorphisms of $X^\vee, X_{ad}^\vee, \mc{A}, \mc{A}_{ad}, W, W_{ad}$ induced by $F.$
Choose a $F$-fixed hyperspecial vertex $o\in \mc{A}_{ad}.$

Let $\phi:W'_k\rightarrow \langle \hat{\vartheta}\rangle\ltimes\hat{G}$ be a TRD parameter for $G$.
As $\phi$ is regular, $\phi(\mr{Frob})=\hat{\vartheta}f$ where $f\in N_{\hat{G}}(\hat{T}).$ 
Denote by $\hat{w}$ the image of $f$ in  $\hat{W}_o=N_{\hat{G}}(\hat{T})/\hat{T}$.
For any $\hat{\sigma}\in\mathrm{Aut}(X)$, we define $\sigma \in \mathrm{Aut}(X^\vee)$ by $\langle\eta,\sigma\lambda\rangle = \langle\hat{\sigma}\eta,\lambda\rangle,  \eta \in X, \lambda \in X^\vee.$
Therefore, $\hat{w}$ induces a dual automorphism $w$ of $X$, where $w\in W_o=N_G(T)/T.$ 
Given $\lambda\in X^\vee$, let $t_{\lambda}\in W$ be the corresponding translation given by $x\mapsto \lambda + x.$ As an automorphism of $X^\vee,$ $w$ is a linear transformation on $\mathcal{A},$ so define 
\[\sigma_{\lambda}=t_{\lambda} w\vartheta\in  W\rtimes \langle \vartheta\rangle.\]

Since $\phi$ is a discrete series parameter we have $(X^\vee_{ad})^{w\vartheta}=\{0\}.$
The operator $I-w\vartheta$ acts invertibly on $\mc{A}_{ad},$ 
so $\sigma_{\lambda}$ has a unique fixed point there, namely 
\bdm x_{\lambda}=(I-w\vartheta)^{-1}t_{j\lambda}\cdot o.\edm 
Denote by $\tilde{x}_\lambda$ the preimage of $x_\lambda$ in $\mc{A}^{\sigma_\lambda}.$ 
By [DR, 4.4.1] we have $\mc{A}^{\sigma_\lambda}=\tilde{x}_\lambda.$
Let $J_{\lambda}$ be the unique facet of $\mathcal{A}$ containing $\tilde{x}_{\lambda},$ as in [DR, 2.7]. 
Let $K_{\lambda}$ be the parahoric subgroup of $G$ given by $J_\lambda$, and $\ms{G}_{\lambda}=K_{\lambda}/K^+_{\lambda}$ where $K^+_{\lambda}$ is the pro-unipotent radical of $K_{\lambda}.$

We have that $C_{\hat{G}}(Im(\phi))=\hat{T}^{\widehat{w\vartheta}}.$
Recall $A_\phi=\pi_0(C_{\hat{G}}(Im(\phi))).$ Any $\lambda \in X^\vee$ determines a character $\rho_{\lambda} \in \mathrm{Irr}(A_{\phi})$ by the restriction map $X^\vee \rightarrow\mr{Hom}(\hat{T}^{\widehat{w\vartheta}},\mb{C}^{\times})$ which induces an isomorphism [DR, 4.1]
\bdm [X^\vee/(1-w\vartheta)X^\vee]_{\mathrm{tor}} \tilde{\longrightarrow} \mathrm{Irr}(A_{\phi}),\quad \lambda \rightarrow \rho_{\lambda}.\edm 
Let $X_w$ be the preimage in $X^\vee$ of $[X^\vee/(1-w\vartheta)X^\vee]_{\mr{tor}}.$
For $\lambda \in X_w$,  define $[\overline{\lambda}]$ as the image of $[\lambda]$ under the map
\[ [X^\vee/(1-w\vartheta)X^\vee]_{\mr{tor}}\rightarrow [(X^\vee/\mb{Z}\Phi^\vee)/(1-\vartheta)(X^\vee/\mb{Z}\Phi^\vee)]_{\mr{tor}}=\mr{Irr}[\pi_0(\hat{Z}^{\hat{\vartheta}})]  \simeq H^1(k, G),\]
where  the first map is projection and the last map is Kottwitz' isomorphism [DR, 2.4-2.6].

Choose an alcove $C_\lambda$ in $\mc{A}$ that contains $J_\lambda$ in its closure.
There is a unique element $w_{\lambda}\in W_{\lambda}$ such that $\sigma_{\lambda}\cdot C_\lambda=w_{\lambda}\cdot C_\lambda,$ where $W_{\lambda}$ is the subgroup of $W$ generated by reflections in the hyperplanes containing $J_{\lambda}.$ If we let $y_{\lambda}=w_{\lambda}^{-1}t_{\lambda}w,$ then we have two expressions for $\sigma_{\lambda},$ namely 
\bdm t_{\lambda}w\vartheta=\sigma_{\lambda}=w_{\lambda}y_{\lambda}\vartheta.\edm 
By [DR, 2.7], there exists a lift $u_\lambda\in N_G(T)$ of $y_\lambda$ such that 
\bdm [u_\lambda]=[\overline{\lambda}]\in H^1(k, G).\edm
DeBacker and Reeder define \bdm F_\lambda = \mr{Ad}(u_\lambda)\circ F.\edm 
This determines the inner form $G^{F_\lambda}$ on which the representation they construct will live.


Choose a lift $\dot{w}$ of $w$ to an $F$-stable element of $N_G(T)\cap K_o$ where $o$ is the fixed hyperspecial vertex in $\mc{A}_{ad}.$ 
Let  $F_w=\mr{Ad}(\dot{w})\circ F.$
By [DR, 2.3.1] there exists an element $p_\lambda\in G_\lambda$ such that $p_\lambda\cdot x_\lambda=x_\lambda$ and if $T_\lambda :=\mr{Ad}(p_\lambda)T$ then
$\mr{Ad}(p_\lambda)$ intertwines $(T, F_w)$ with $(T_\lambda, F_\lambda).$
Since $p_\lambda$ fixes $x_\lambda$ we have \[\leftexp{0}{T_\lambda^{F_\lambda}}\subset K_\lambda^{F_\lambda}\subset G^{F_\lambda}.\]

We slightly modify $\phi$ to obtain a parameter $\phi'$ of $T^{F_w}$.
The $L$-group of $T^{F_w}$ is \bdm \leftexp{L}{(T^{F_w})}=\langle \hat{w}\rangle\ltimes\hat{T}.\edm 
We have that the inclusion $\hat{T}\hookrightarrow\hat{G}$ induces a bijection $\hat{T}/(1-\hat{w})\hat{T} \rightarrow (\hat{G}/\hat{G}')/(1-\hat{\vartheta})(\hat{G}/\hat{G}')$ where $\hat{G}'$ is the derived group of $\hat{G}.$
Define $\phi':W_k\longrightarrow \leftexp{L}{(T^{F_w})}$ by 
\bdm\phi'(\mathcal{I})=\phi(\mathcal{I}), \quad\phi'(\mr{Frob})=\hat{w}\ltimes u \in \langle \hat{w}\rangle\ltimes\hat{T},\edm
where $u\in\hat{T}$ is any element whose class in $\hat{T}/(1-\hat{w})\hat{T}$ corresponds to the image of $f$ (where $\phi(\mr{Frob})=\hat{\vartheta}\ltimes f$) in $(\hat{G}/\hat{G}')/(1-\hat{\vartheta})(\hat{G}/\hat{G}').$ 
As given in [DR, 4.3], by the Langlands correspondence for unramified tori, the parameter $\phi'$ determines a depth-zero character $\chi_\phi$ of $T^{F_w}.$

We have that $\leftexp{0}{T}_{\lambda}=T_{\lambda}\cap G_{\lambda}$ and that via the reduction mod $\mf{p}$ map, $\leftexp{0}{T}_\lambda$ projects onto $\ms{T}_\lambda$, an $F_\lambda$-minisotropic maximal torus in $\ms{G}_{\lambda}.$ 
Conjugate the character $\chi_{\phi}$ of $T^{F_w}$ to get a character $\chi_\lambda$ of $T_{\lambda}^{F_\lambda}$:
\bdm \chi_{\lambda}=\chi_{\phi}\circ\mathrm{Ad}(p_{\lambda})^{-1}. \edm 
The restriction of $\chi_{\lambda}$ to $\leftexp{0}{T}_{\lambda}^{F_\lambda}$ factors through a character $\chi_{\lambda}^{0}\in \mathrm{Irr}(\mathsf{T}_{\lambda}^{F_\lambda})$ which is in general position. By Deligne-Lusztig [DL] induction we have an irreducible cuspidal representation
$\epsilon_{\ms{T}}\epsilon_{\ms{G}} R_{\ms{T}_{\lambda}, \chi_{\lambda}^{0}}^{\ms{G}_{\lambda}}$ of $\ms{G}_\lambda^{F_\lambda}.$
Define 
\bdm \pi_\lambda=c-\mr{Ind}_{Z^F K_{\lambda}^{F_\lambda}}^{G^{F_\lambda}}\big(\chi_{\lambda}\otimes \epsilon_{\ms{T}}\epsilon_{\ms{G}} R_{\ms{T}_{\lambda}, \chi_{\lambda}^{0}}^{\ms{G}_{\lambda}}\big).\edm 
By [DR, 4.5.1],  $\pi_\lambda\in[\pi(\phi,\rho)]$ 
is an irreducible supercuspidal representation of $G^{F_\lambda}.$



\subsection{Extending DeBacker-Reeder}

We extend the DeBacker-Reeder construction to inner forms of an unramified group $G$ in the situation where $H^1(K, Z)=0,$ where $Z$ is the center of $G$. In this case
\[G(K)\rightarrow G_{ad}(K)\rightarrow 1\] is surjective where $j:G\rightarrow G_{ad}$ is the adjoint quotient. The notation is the same as in the previous section. 

Let $\phi$ be a TRD parameter for $G$.
We have $\widehat{T_{ad}}=\hat{T}\cap \widehat{G_{ad}}$ is a maximal torus of $\widehat{G_{ad}}.$
Since \bdm N_{\hat{G}}(\hat{T})/\hat{T}\cong N_{\widehat{G_{ad}}}(\widehat{T_{ad}})/\widehat{T_{ad}}\edm 
are canonically isomorphic, let $\hat{w}$ also denote the corresponding element in $N_{\widehat{G_{ad}}}(\widehat{T_{ad}})/\widehat{T_{ad}}.$
Denote by $w$ the corresponding automorphism of $X^\vee_{ad}.$
We apply the DeBacker-Reeder construction as given above to the group $G_{ad}.$ 
Let $\lambda\in X^\vee_{ad}.$
We have a choice of element $\dot{w}\in N_{G_{ad}(k)}(T_{ad}(k))\cap K_{o,ad}$ where $K_{o,ad}$ is the parahoric subgroup attached to the $F$-fixed hyperspecial vertex $o\in\mc{A}_{ad}$.
The DeBacker-Reeder construction also gives us an element  $p_\lambda\in G_{ad}(K)$ such that the map $T_{ad}\rightarrow \mr{Ad}(p_\lambda)T_{ad}$ satisfies 
$F_\lambda\circ\mr{Ad}(p_\lambda)=\mr{Ad}(p_\lambda)\circ F_w.$

We will use this data to construct representations of inner forms of $G$.
Let $X_w$ be the preimage in $X^\vee$ of $[X^\vee/(1-w\vartheta)X^\vee]_{\mr{tor}}$, and let $X_{w,ad}$ be the preimage in $X^\vee_{ad}$ of $[X^\vee_{ad}/(1-w\vartheta)X^\vee_{ad}]_{\mr{tor}}$.
We have $X^\vee\rightarrow X^\vee_{ad}\rightarrow 0,$ and for $\lambda\in X_{w,ad},$ there is a unique lift $\dot{\lambda}$ of $\lambda$ to $X_w$. Let \bdm \sigma_{\dot{\lambda}}=t_{\dot{\lambda}} w\vartheta.\edm
We also have $\mc{A} \longrightarrow\mc{A}_{ad}\longrightarrow 0$. Let $\tilde{x}_\lambda$ be the pre-image of $x_\lambda$ in $\mc{A}^{\sigma_{\dot{\lambda}}}$. Let $J_\lambda$ be the unique facet containing $\tilde{x}_\lambda,$ as in [DR, 2.7]. Let $K_\lambda$ be the parahoric subgroup of $G(K)$ determined by $J_\lambda,$ and let $\ms{G}_\lambda=K_\lambda/K_\lambda^+.$  
The class $[u_\lambda]\in H^1(k, G_{ad})$ determines the inner form of $G$ containing $K_\lambda.$ 

Take the element $p_\lambda\in G_{ad}(K)$ and pull it back to an element we also denote $p_\lambda\in G(K)$. We will use the notation $p_{\lambda,sc}$ if we need to distinguish the pullback from its image in $G_{ad}(K).$ Any two such choices of pullback $p_\lambda$ differ by an element of the center of $G(K)$. 
Denote also by $\dot{w}$ an element of $K^F_o$, where $K_o$ is the parahoric subgroup of $G(K)$ attached to the hyperspecial vertex $o$ in $\mc{A}_{ad}$, that projects onto $\dot{w}$ as given above. Let \bdm F_w=\mr{Ad}(\dot{w})\circ F.\edm
The parameter $\phi$ determines a character $\chi_\phi$ of $T^{F_w}.$  Define
\bdm T_\lambda:= \mr{Ad}(p_\lambda)T.\edm 
We have $G(K)$ acts on $\mc{B}(G_{ad}(K))$ via the map $j$. By [DR, 4.4.1], $p_\lambda\cdot x_\lambda=x_\lambda$. Therefore $p_{\lambda,sc}\cdot x_\lambda=x_\lambda$ so that \bdm \leftexp{0}{T}_\lambda\subset K_\lambda\edm where $\leftexp{0}{T}_\lambda$ is the maximal compact subgroup of $T_\lambda.$

Conjugate the character $\chi_{\phi}$ of $T^{F_w}$ to get a character $\chi_\lambda$ of $T_{\lambda}^{F_\lambda}:\,\chi_{\lambda}=\chi_{\phi}\circ\mathrm{Ad}(p_{\lambda})^{-1}.$
Define 
\bdm \pi_\lambda=c-\mr{Ind}_{Z^F K_{\lambda}^{F_\lambda}}^{G^{F_\lambda}}\big(\chi_{\lambda}\otimes \epsilon_{\ms{T}}\epsilon_{\ms{G}} R_{\ms{T}_{\lambda}, \chi_{\lambda}^{0}}^{\ms{G}_{\lambda}} \big)\edm where $\pi_\lambda\in[\pi(\phi,\rho_\lambda)].$

\begin{lemma} The representation $\pi_\lambda$ of $G^{F_\lambda}$ is irreducible supercuspidal. \end{lemma}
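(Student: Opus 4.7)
The plan is to follow the argument of DeBacker--Reeder, [DR, 4.5.1], almost verbatim, and verify that each step survives in the extended setting where $H^1(K,Z)=0$. By Mackey's irreducibility criterion for compact induction from an open subgroup that is compact modulo the center, it suffices to check two things:
\textbf{(a)} the inducing representation $\tau_\lambda := \chi_\lambda\otimes \epsilon_{\ms{T}}\epsilon_{\ms{G}}R^{\ms{G}_\lambda}_{\ms{T}_\lambda,\chi_\lambda^0}$ is an irreducible representation of $Z^F K_\lambda^{F_\lambda}$; and
\textbf{(b)} for every $g\in G^{F_\lambda}\smallsetminus Z^F K_\lambda^{F_\lambda}$ the intertwining $\mr{Hom}_{Z^F K_\lambda^{F_\lambda}\cap g Z^F K_\lambda^{F_\lambda}g^{-1}}(\tau_\lambda,\tau_\lambda^g)$ vanishes.

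For (a), the torus $\ms{T}_\lambda$ is $F_\lambda$-minisotropic in $\ms{G}_\lambda$ by construction, and $\chi_\lambda^0$ is in general position; hence by Deligne--Lusztig [DL], $\epsilon_{\ms{T}}\epsilon_{\ms{G}}R^{\ms{G}_\lambda}_{\ms{T}_\lambda,\chi_\lambda^0}$ is an irreducible cuspidal representation of $\ms{G}_\lambda^{F_\lambda}$, and pulls back to an irreducible representation of $K_\lambda^{F_\lambda}$ trivial on $K_\lambda^{+,F_\lambda}$. The character $\chi_\lambda$ agrees with this Deligne--Lusztig piece on the intersection $Z^F\cap K_\lambda^{F_\lambda}\subset \leftexp{0}{T}_\lambda^{F_\lambda}$ by the very definition of $\chi_\lambda = \chi_\phi\circ \mr{Ad}(p_\lambda)^{-1}$ (since $\chi_\phi$ comes from the Langlands parameter and restricts correctly to the center), so $\tau_\lambda$ is well defined and irreducible.

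For (b), one reduces the question, via Moy--Prasad/Morris theory for depth-zero types, to analysing how $g$ acts on the facet $J_\lambda$ and on the pair $(\ms{T}_\lambda,\chi_\lambda^0)$. If $g\cdot J_\lambda\ne J_\lambda$ the intersection $K_\lambda^{F_\lambda}\cap g K_\lambda^{F_\lambda} g^{-1}$ maps to a proper parabolic of $\ms{G}_\lambda^{F_\lambda}$, and cuspidality of $R^{\ms{G}_\lambda}_{\ms{T}_\lambda,\chi_\lambda^0}$ kills the Hom. If $g$ stabilizes $J_\lambda$ but $g\notin Z^F K_\lambda^{F_\lambda}$, its image in $N_{\ms{G}_\lambda^{F_\lambda}}(\ms{T}_\lambda)/\ms{T}_\lambda^{F_\lambda}$ is a nontrivial Weyl element $w'$, and since $\chi_\lambda^0$ is in general position, $(w')^*\chi_\lambda^0\ne \chi_\lambda^0$; the orthogonality relations for Deligne--Lusztig characters [DL] then give the required vanishing. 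Once (a) and (b) hold, irreducibility of $\pi_\lambda$ is immediate; supercuspidality follows because the inducing subgroup is open and compact modulo the center, and depth zero because $\tau_\lambda$ is trivial on $K_\lambda^+$.

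The main obstacle is step (b) in the extended setting. The lift $p_{\lambda,sc}\in G(K)$ of $p_\lambda\in G_{ad}(K)$ is only well defined up to $Z(K)$, and likewise the choice of $\dot w$ lifting the Weyl element is only well defined up to the maximal bounded subgroup of $T$; one must verify that these ambiguities are absorbed into $Z^F K_\lambda^{F_\lambda}$ and so do not enlarge the intertwining group. This is precisely where the hypothesis $H^1(K,Z)=0$ enters: it guarantees that the DeBacker--Reeder cocycle $u_\lambda\in N_{G_{ad}}(T_{ad})$ lifts to $G$ in a way compatible with $F$, so that the class $[u_\lambda]\in H^1(k,G_{ad})$ faithfully labels the inner form and the identification of $N_{G^{F_\lambda}}(K_\lambda^{F_\lambda})/K_\lambda^{F_\lambda}$ with the appropriate relative Weyl group goes through exactly as in [DR, \S2.7]. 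With this verification, the remainder of the argument is a direct transcription of the proof of [DR, 4.5.1].
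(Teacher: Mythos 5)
Your argument is correct and takes essentially the same route as the paper: the paper's entire proof of this lemma is the single line that it is the same as [DR, 4.5.1], and your Mackey-criterion unpacking is precisely the content of that cited result. Your closing observation about where $H^1(K,Z)=0$ enters—ensuring surjectivity of $G(K)\to G_{ad}(K)$ so that the ambiguities in the lifts $p_{\lambda,sc}$ and $\dot w$ are absorbed into $Z^F K_\lambda^{F_\lambda}$—is a welcome clarification of why the one-line citation is justified in the extended setting.
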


\begin{proof} The proof is the same as [DR, 4.5.1]. \end{proof}

\begin{lemma} The $G(K)$-orbit $[u_\lambda,\pi_\lambda]=\mr{Ad}(G(K))\cdot (u_\lambda,\pi_\lambda)$ depends only on the character $\rho_\lambda\in\mr{Irr}(B_\phi).$ \end{lemma}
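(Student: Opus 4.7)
The plan is to follow closely the argument of [DR, 4.5.2], suitably extended to accommodate the central lift ambiguity introduced in Section 4.4. Two things must be shown. First, for a fixed $\lambda\in X_{w,ad}$, the auxiliary choices---the $F$-stable lift $\dot w$ of $w$, the pullback $p_{\lambda,sc}\in G(K)$ of $p_\lambda\in G_{ad}(K)$, and the lift $u_\lambda\in N_G(T)$ of $y_\lambda$---all produce pairs lying in the same $G(K)$-orbit. Second, if $\lambda,\lambda'\in X_{w,ad}$ satisfy $\rho_\lambda=\rho_{\lambda'}$, then by the isomorphism $[X^\vee_{ad}/(1-w\vartheta)X^\vee_{ad}]_{\mathrm{tor}}\simeq \mathrm{Irr}(B_\phi)$ we have $\lambda'-\lambda\in(1-w\vartheta)X^\vee_{ad}$, and the resulting pairs $(u_\lambda,\pi_\lambda)$ and $(u_{\lambda'},\pi_{\lambda'})$ are $G(K)$-conjugate.

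For independence from auxiliary choices, I will observe that any two valid $\dot w$ differ by an element of $\leftexp{0}{T}^F\subset K_o^F$, any two pullbacks $p_{\lambda,sc}$ differ by a central element of $G(K)$, and any two lifts $u_\lambda$ realizing $[\overline{\lambda}]\in H^1(k,G_{ad})$ differ by an element of $\leftexp{0}{T}$. Each such modification replaces $(u_\lambda,\pi_\lambda)$ by $\mathrm{Ad}(g)(u_\lambda,\pi_\lambda)$ for some $g\in G(K)$: the character $\chi_\lambda=\chi_\phi\circ\mathrm{Ad}(p_\lambda)^{-1}$ and the Deligne--Lusztig datum $(\ms{T}_\lambda,\chi_\lambda^0)$ both transform covariantly.

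For the main change, I will write $\lambda'=\lambda+(1-w\vartheta)\mu$ with $\mu\in X^\vee_{ad}$, lift to $\dot\mu\in X^\vee$, fix a uniformizer $\varpi$ of $k$, and set $n_\mu=\dot\mu(\varpi)\in T(K)$. A direct computation in $W\rtimes\langle\vartheta\rangle$ yields the key identity
\[
t_{\dot\mu}\,\sigma_{\dot\lambda}\,t_{-\dot\mu}=\sigma_{\dot\lambda'},
\]
so $\mathrm{Ad}(n_\mu)$ sends $x_\lambda$ to $x_{\lambda'}$, the facet $J_\lambda$ to $J_{\lambda'}$, and consequently $K_\lambda$ to $K_{\lambda'}$. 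Taking $\mathrm{Ad}(n_\mu)p_\lambda$ as a valid choice of $p_{\lambda'}$ and computing the associated $u_{\lambda'}$, the first step allows me to absorb any residual discrepancy into a $G(K)$-conjugation, so that $\mathrm{Ad}(n_\mu)\pi_\lambda\cong\pi_{\lambda'}$ follows.

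The hard part will be the verification on the Deligne--Lusztig side: I need to check that $\mathrm{Ad}(n_\mu)$ carries $(\ms{T}_\lambda,\chi_\lambda^0)$ to $(\ms{T}_{\lambda'},\chi_{\lambda'}^0)$ so that the virtual representations $\epsilon_{\ms{T}}\epsilon_{\ms{G}}R_{\ms{T}_\lambda,\chi_\lambda^0}^{\ms{G}_\lambda}$ correspond under the conjugation. Via the definition of $\chi_\lambda$ and the analogous formula for $\chi_{\lambda'}$, this reduces to tracking how $\chi_\phi$ on $T^{F_w}$ interacts with translation by $\dot\mu$. The essential input is that $\phi'|_\mc{I}=\phi|_\mc{I}$ is unchanged and that the Frobenius component of $\phi'$ is twisted by exactly the factor encoded in the passage from $F_\lambda$ to $F_{\lambda'}$, so the character values match as required.
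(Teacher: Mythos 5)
Your proposal sets out to re-derive the orbit statement from scratch, essentially re-running the internal argument of [DR, 4.5.2]: write $\lambda'=\lambda+(1-w\vartheta)\mu$, conjugate by $n_\mu=\dot\mu(\varpi)$, track how $\sigma_{\dot\lambda}$, $x_\lambda$, $J_\lambda$, $p_\lambda$, and $\chi_\lambda$ transform, and handle the auxiliary-choice independence separately. The paper does something quite different and considerably more economical: it first notes, citing [MP, 6.2], that the desired $G(K)$-conjugacy of $(u_\lambda,\pi_\lambda)$ and $(u_\mu,\pi_\mu)$ is equivalent to finding a single $g\in G(K)$ satisfying $g*u_\lambda=u_\mu$, $g\cdot J_\lambda=J_\mu$, and $\mr{Ad}(g)_*\kappa_\lambda\simeq\kappa_\mu$; it then invokes the already-proved [DR, 4.5.2] for the adjoint group $G_{ad}$ to produce $g_{ad}\in G_{ad}(K)$ with the three analogous properties; and finally it lifts $g_{ad}$ to $g\in G(K)$ using the surjectivity hypothesis $G(K)\twoheadrightarrow G_{ad}(K)$ and checks that the three conditions pass through the lift (lifting $s_{ad}$ to $s\in T_\mu$ along the way). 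The entire Deligne--Lusztig comparison you flag as ``the hard part'' is thus absorbed into the black-box citation of [DR, 4.5.2].

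This matters, because that Deligne--Lusztig verification is exactly where your write-up stops short: you note that you ``need to check that $\mr{Ad}(n_\mu)$ carries $(\ms{T}_\lambda,\chi_\lambda^0)$ to $(\ms{T}_{\lambda'},\chi_{\lambda'}^0)$'' and gesture at the Frobenius twist, but no actual computation is given; this is not a small detail but the crux of [DR]'s argument. Similarly, the ``independence from auxiliary choices'' paragraph asserts that each modification is absorbed by a $G(K)$-conjugation without establishing it (the case of a different pullback $p_{\lambda,sc}$, which only changes $\chi_\lambda$ by a central character, needs a small argument that the resulting depth-zero data still agree up to $\mr{Ad}(G(K))$). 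So as written there is a genuine gap. The more efficient repair is to follow the paper: reduce to [MP, 6.2], apply [DR, 4.5.2] to $G_{ad}$, and lift $g_{ad}$ to $G(K)$; the only new content then is the (easy) verification that the three conditions descend to the lift, which is precisely what the surjectivity hypothesis in Section 4.3 was introduced to guarantee.
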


\begin{proof}
Let $X_{w,ad}$ denote the preimage in $X^\vee_{ad}$ of $[X^\vee_{ad}/(1-w\vartheta)X^\vee_{ad}]_{\mr{tor}}\simeq \mr{Irr}(B_\phi)$. 
Given a TRD parameter $\phi$ and $\lambda\in X_{w,ad}$,
using the DeBacker Reeder construction applied to $G_{ad}$ choices of $C_\lambda, u_\lambda, p_\lambda, \dot{w}$ were made.
In defining the representation $\pi_\lambda$ we made choices of lifts $\dot{w}_{sc}$ and $p_{\lambda,sc}$ in $G(K)$.
Given a TRD parameter $\phi$, for $\lambda,\mu\in X_{w,ad},$ make choices 
\bdm (C_\lambda, u_\lambda, p_{\lambda,sc},\dot{w}_{\lambda,sc}),\quad (C_\mu, u_\mu, p_{\mu,sc}, \dot{w}_{\mu,sc})\edm 
respectively. We will show that $\rho_\lambda=\rho_\mu$ if and only if there exists a $g\in G(K)$ such that
\bdm g*u_\lambda=u_\mu, \quad g\cdot J_\lambda=J_\mu, \quad \mr{Ad}(g)_*\kappa_\lambda\simeq\kappa_\mu.\edm
This is what we mean by the statement the $G(K)$-orbit $[u_\lambda,\pi_\lambda]$ depends only on $\rho_\lambda.$
By [MP, 6.2] those three conditions are equivalent to having a $g\in G$ such that $\mr{Ad}(g)\cdot (u_\lambda, \pi_\lambda)=(u_\mu,\pi_\mu).$

By [DR, 4.5.2] there exists a $g_{ad} \in G_{ad}(K)$ such that 
\bdm g_{ad}*u_\lambda=u_\mu, \quad g_{ad}\cdot x_\lambda=x_\mu, \quad \mr{Ad}(g_{ad})(T_{ad})_\lambda =\mr{Ad}(s_{ad}) (T_{ad})_\mu, \edm
where 
$s_{ad}\in (T_{ad})_\mu.$ 
As $G(K)\rightarrow G_{ad}(K)$ is surjective, choose a lift $g$ of $g_{ad}$ to $G(K)$.
Then \bdm g*u_\lambda=g_{ad}*u_\lambda=u_\mu.\edm
An element $g\in G(K)$ acts on $\mc{B}(G_{ad}(K))$ via the adjoint quotient $j:G(K)\rightarrow G_{ad}(K),$ so 
\bdm g\cdot x_\lambda=g_{ad}\cdot x_\lambda=x_\mu.\edm 
Therefore $g\cdot J_\lambda=J_\mu.$
Also choose a lift $s$ of $s_{ad}$ to $G(K).$
Then, \bdm \mr{Ad}(g)T_\lambda=\mr{Ad}(s)T_\mu, \quad \chi_\lambda\circ\mr{Ad}(g)^{-1}=\chi_\mu\circ\mr{Ad}(s)^{-1},\edm
where $s\in T_\mu.$
This shows $\mr{Ad}(g)_*\kappa_\lambda\simeq\kappa_\mu.$ 
\end{proof}

\begin{lemma} Given a TRD parameter $\phi$ for $G$ and $\rho_\lambda\in\mr{Irr}(B_\phi),$ let $[u_\lambda,\pi_\lambda]$ be the associated $G(K)$-orbit of representations. For any $\rho_\lambda\in \mr{Irr}(A_\phi)\subset\mr{Irr}(B_\phi)$,  let $[u_\lambda,\pi_\lambda]_{DR}$ be the $G(K)$-orbit of representations associated to $\phi$ and $\rho_\lambda$ by DeBacker and Reeder. Then, \bdm [u_\lambda,\pi_\lambda]=[u_\lambda,\pi_\lambda]_{DR}.\edm \end{lemma}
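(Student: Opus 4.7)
The plan is to show that when $\rho_\lambda \in \mr{Irr}(A_\phi) \subset \mr{Irr}(B_\phi)$, one may choose the auxiliary data of the extended construction so that it literally reproduces, step for step, the data used by DeBacker--Reeder; the two $G(K)$-orbits then coincide on the nose, not merely up to conjugacy. The key observation is that every object produced by the extended construction is either defined on $G_{ad}$ and therefore equal to the $j$-image of the corresponding DR object, or is defined up to a central ambiguity that acts trivially on the final orbit via $\mr{Ad}$.

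First I would match the input data. The isomorphism $[X^\vee/(1-w\vartheta)X^\vee]_{\mr{tor}} \simeq \mr{Irr}(A_\phi)$ provides a lift of the class of $\lambda$ in $X_{w,ad}$ to an element $\dot{\lambda}\in X_w$; this $\dot{\lambda}$ is precisely the element with which the DR construction is run. Next I would compare the affine--Weyl data: with this common $\dot{\lambda}$, the extended $\sigma_{\dot{\lambda}} = t_{\dot{\lambda}} w\vartheta$ is carried by $j:W\rtimes\langle\vartheta\rangle \to W_{ad}\rtimes\langle\vartheta\rangle$ to the element used downstairs, so the fixed point $x_\lambda\in\mc{A}_{ad}$, its preimage $\tilde{x}_\lambda\in\mc{A}$, the facet $J_\lambda$, and thus the parahoric $K_\lambda\subset G(K)$ and its reductive quotient $\ms{G}_\lambda$ are identical in both constructions. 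The factorization $\sigma_{\dot{\lambda}} = w_\lambda y_\lambda \vartheta$ and its lift $u_\lambda\in N_G(T)$ with cohomology class $[\overline{\dot{\lambda}}]\in H^1(k,G_{ad})$ can therefore be chosen to agree.

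Next I would compare the Frobenius twists, tori and characters. Take the $F$-stable lift $\dot{w}\in N_G(T)\cap K_o$ used by DR as the lift $\dot{w}_{sc}$ required by the extended construction, and similarly take the element $p_\lambda$ supplied by DR as the lift $p_{\lambda,sc}$ of its $G_{ad}(K)$-image. Since $\phi'$ and hence the depth-zero character $\chi_\phi$ on $T^{F_w}$ are defined from $\phi$ via the inclusion $\hat{T}\hookrightarrow\hat{G}$ in the same way in both constructions, the conjugated torus $T_\lambda = \mr{Ad}(p_\lambda) T$ and the character $\chi_\lambda = \chi_\phi\circ\mr{Ad}(p_\lambda)^{-1}$ are the same, and so the Deligne--Lusztig induced representation $\epsilon_{\ms{T}}\epsilon_{\ms{G}}R^{\ms{G}_\lambda}_{\ms{T}_\lambda,\chi_\lambda^0}$ and the compactly induced representation $\pi_\lambda$ on $G^{F_\lambda}$ coincide. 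Combining these matches gives $(u_\lambda,\pi_\lambda)_{DR} = (u_\lambda,\pi_\lambda)$, a fortiori the same $G(K)$-orbit.

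The main technical point to take care with is the central ambiguity introduced when one pulls $p_\lambda$ and $\dot{w}$ back from $G_{ad}(K)$ to $G(K)$: two such lifts differ by an element of $Z(G(K))$, which commutes with everything, so $\mr{Ad}$ is insensitive to the choice; this is what allows the DR choice to serve as a valid choice of $p_{\lambda,sc}$ and $\dot{w}_{sc}$ in the extended construction. Apart from this bookkeeping, the argument is a direct transcription of the DeBacker--Reeder recipe through $j$, and no new input beyond DR Sections 2 and 4 is required.
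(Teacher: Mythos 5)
Your proposal is correct and takes essentially the same approach as the paper, whose proof of this lemma is a one-sentence sketch asserting that one can make concurrent choices in the two constructions at each step; you have simply spelled out what those concurrent choices are (using the unique lift $\dot{\lambda}\in X_w$ of $\lambda$, and taking DR's $\dot{w}$ and $p_\lambda$ in $G(K)$ as the lifts $\dot{w}_{sc}$, $p_{\lambda,sc}$), together with the observation that any remaining ambiguity is central and harmless under $\mr{Ad}$.
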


\begin{proof}
Let $\phi$ be a $TRD$ parameter for $G$ and let $\rho_\lambda \in\mr{Irr}(A_\phi).$ One shows that for each step in the DeBacker and Reeder construction of a representation $\pi_\lambda$ associated to $\rho_\lambda$, one can make concurrent choices in our construction so that \bdm \pi_\lambda\in[u_\lambda,\pi_\lambda]_{DR}\,\,\Longleftrightarrow\,\, \pi_\lambda\in[u_\lambda,\pi_\lambda].\edm
\end{proof}

\subsection{ Construction of $L_\phi^{DR}$ for $GSpin_5$}

In this section let $G=GSpin_5$. Then $\hat{G}=GSp_4$, $G_{ad}=SO_5$, and $\widehat{G_{ad}}=Sp_4$.  
The root datum $(X,\Phi,X^\vee, \Phi^\vee)$ of $GSpin_5$ can be described as follows [AS, Prop 2.1]. 
We have that
\bdm X=\mb{Z}e_0\oplus\mb{Z}e_1\oplus\mb{Z}e_2, \quad
X^\vee=\mb{Z}e_0^*\oplus\mb{Z}e_1^*\oplus\mb{Z}e_2^*,\edm 
\bdm \Delta=\{a_1=e_1-e_2, a_2=e_2\},\quad
\Delta^\vee=\{a_1^\vee=e_1^*-e_2^*, a_2^\vee=2e_2^*-e_0^*\}\edm
are the character and cocharacter lattices, and simple roots and coroots, respectively.
Then \bdm T(K)=\{\prod_{j=0}^{2} e_j^*(\lambda_j):\, \lambda_j\in K^\times\}\edm is the set of $K$ points of a maximal $K$-split torus $T$ of $G$. 

Let $T_{ad}=T/Z$ where $Z$ is the center of $G$. Then $X^*(T_{ad})\hookrightarrow X^*(T)$ is the submodule $\bigoplus a_ie_i$ such that $a_0=0.$ This submodule contains the simple roots $e_1-e_2$ and $e_2.$ Also, \bdm X_*(T)\rightarrow X_*(T_{ad})=\big(\bigoplus_i \mb{Z}e_i^*\big)/ \mb{Z}e_0^*.\edm  The simple coroots for $T_{ad}$ are $e_1^*-e_2^*$ and $2e_2^*$, the image of the coroots for $T$. 
We see that \bdm X_{ad}=\mb{Z}e_1\oplus\mb{Z}e_2,\quad X_{ad}^\vee=\mb{Z}e_1^*\oplus\mb{Z}e_2^*,\quad \Delta=\{ e_1-e_2, e_2\},\quad \Delta^\vee=\{ e_1^*-e_2^*, 2e_2^*\}\edm gives a root datum for $SO_5.$ 
We have \bdm\mc{A}=\mb{R}e_0^*\oplus\mb{R}e_1^*\oplus\mb{R}e_2^*, \quad \mc{A}_{ad}=\mathbb{R}e_1^*\oplus\mathbb{R}e_2^*,\edm 
 where the projection $j:\mc{A}\rightarrow\mc{A}_{ad}$ is given by $(x_0 ,x_1 , x_2 )\mapsto (x_1, x_2 ).$
The fundamental chamber $C$ is given by the inequalities \bdm 1-e_2 >e_{1}>e_{2}>0.\edm 

We have the short exact sequence 
\[1\rightarrow GL_1 \rightarrow GSpin_5 \rightarrow SO_5 \rightarrow 1\] which gives
\bdm 1\rightarrow GL_1(K) \rightarrow GSpin_5(K) \rightarrow SO_5(K) \rightarrow \mr{H}^1(K, GL_1)=0 \edm so that \[GSpin_5(K)\rightarrow SO_5(K)\rightarrow 1\] is surjective. We apply the construction of the previous section.

Let $\phi$ be a TRD parameter for $GSpin_5.$ 
If $\phi$ is irreducible, $\hat{w}$ is a Coxeter element. 
Precisely, if $s_1$ is the simple reflection corresponding to the simple root $e_1-e_2$ for $GSp_4$ and  $s_2$ is the simple reflection corresponding  to the simple root $2e_2-e_0$ for $GSp_4,$ then 
$\hat{w}=s_1s_2 $ or $s_2s_1.$
If $\phi=\phi_1\oplus\phi_2$ then $\hat{w}=s_1s_2s_1s_2.$
If $\phi$ is irreducible, or $\phi=\phi_1\oplus\phi_2$ respectively, as automorphisms of $X$,
\begin{align*} &\hat{w}e_{0}=e_{0}+e_2,\quad \hat{w}e_{1}=-e_2,\quad \hat{w}e_{2}=e_1,\\
\hat{w}e&_{0}=e_{0}+e_{1}+e_2,\quad \hat{w}e_{1}=-e_1,\quad \hat{w}e_{2}=-e_2.\end{align*}
The automorphism $w \in\mr{Aut}(X^\vee_{ad})$ is defined by the equations $\langle e_{i}, we_{j}^*\rangle=\langle\hat{w}e_{i}, e_{j}^*\rangle$ where $i,j=1,2.$ Then, 
\begin{align*} &\,we_{1}^*=e_2^*,\quad we_{2}^*=-e_1^*,\quad \phi\,\,\,\mr{irreducible}\\
&we_{1}^*=-e_{1}^*,\quad we_{2}^*=-e_2^*,\quad\phi=\phi_1\oplus\phi_2.\end{align*}
One can compute \begin{align*} &[X_{ad}^\vee/(1-w)X_{ad}^\vee]_{\mathrm{tor}}=\{\overline{0},\overline{e_1^*}=\overline{e_2^*}\}\cong\mr{Irr}(B_\phi),\quad \phi\,\,\,\mr{irreducible}\\
[X&_{ad}^\vee/(1-w)X_{ad}^\vee]_{\mathrm{tor}}=\{\overline{0},\overline{e_{1}^*+e_{2}^*}, \overline{e_1^*}, \overline{e_2^*}\}\cong\mr{Irr}(B_\phi),\quad\phi=\phi_1\oplus\phi_2.\end{align*}

Let $\rho_\lambda\in [X_{ad}^\vee/(1-w)X_{ad}^\vee]_{\mathrm{tor}}$. We obtain an element $x_\lambda\in \mc{A}_{ad}$ and a facet $J_\lambda\in \mc{A}.$
If $K_\lambda$ is the parahoric subgroup of $GSpin_5(K)$ determined by $J_\lambda,$ we can determine the root datum of the group
$\ms{G}_\lambda:=K_\lambda/K_\lambda^+$ as follows.
Let $J$ be a facet in the fundamental chamber $C$. 
The reduction mod $\mf{p}$ of $T$, denoted $\ms{T}$, is a maximal $\bar{\mf{f}}$-split torus of the reduction mod $\mf{p}$ of $G^{J}.$ The character group of $\ms{T}$ is canonically isomorphic with the character group of $T$.
The positive roots of $\mathsf{G}_J$ are 
\bdm\Phi^{+}_{J}=\{a\in\Phi^{+}:\langle a, x \rangle\in \mb{Z}\textrm{ for all } x\in J\},\edm where $\Phi^+$ is a set of positive roots for $G$ given by a choice of simple roots $\Delta$.
The coroot associated with a root $a\in\Phi^+_J$ is the same for $\ms{G}_J$ as for $G$.
See the appendix for the determination of root datum for specific connected reductive linear algebraic groups that occur in this paper. 

The affine Weyl group $W_{ad}$ decomposes as $W_{ad}=\Omega_CW^\circ,$ where $W^\circ$ acts simply transitively on the alcoves in $\mc{A}_{ad}$ and $\Omega_C=\{\omega\in W\,:\, \omega\cdot C=C\}.$
Up to conjugation to an automorphism of the fundamental chamber $C\subset\mc{A}_{ad}$ the element $y_\lambda\in \Omega_C.$
 For $G_{ad}=SO_5,$ we have $\Omega_C=\{1,-1\}$ where $-1$ acts on $\mc{A}_{ad}$ as reflection in the first coordinate \[ -1\cdot(x_1, x_2 )=(1-x_1 ,x_2 ).\]
The class $[u_\lambda]\in H^1(k, G_{ad})$, where $u_\lambda\in N_{G_{ad}(K)}(T_{ad}(K))$ is a lift of $y_\lambda$, determines the inner form of $GSpin_5$ containing $K_\lambda.$ 
We have the following:

\renewcommand{\arraystretch}{1.6}
\begin{center} \begin{tabular}{| c | c | c | c | c |} 
\hline
$\phi$ & $\rho_\lambda$ & $x_\lambda$ & $\Phi_{x_\lambda}^+$ & $[u_\lambda]$  \\
\hline 
irred & $\overline{0}$ & 0 & $\{e_1-e_2, e_2, e_1, e_1+e_2\}$ & 1 \\
\hline
irred & $\overline{e_1^*}$ & $1/2(e_1^*+e_2^*)$ & $\{e_1-e_2, e_1+e_2\}$ & $-1$ \\
\hline
$\phi_1\oplus\phi_2$ & $\bar{0}$ & 0 & $\{e_1-e_2, e_2, e_1, e_1+e_2\}$ & 1 \\
\hline
$\phi_1\oplus\phi_2$ & $\overline{e_1^*+e_2^*}$ & $1/2(e_1^*+e_2^*)$ & $\{e_1-e_2, e_1+e_2\}$ & 1 \\
\hline
$\phi_1\oplus\phi_2$ & $\overline{e_1^*}$ & $1/2(e_1^*)$ & $\{e_2\}$ & $-1$ \\
\hline
$\phi_1\oplus\phi_2$ & $\overline{e_2^*}$ & $1/2(e_2^*)$ & $\{e_1\}$ & $-1$ \\
\hline
 \end{tabular} \end{center}
\vskip 10pt

In the following table we view $\rho_\lambda\in\mr{Irr}(B_\phi).$
Note that \bdm \leftexp{2}{GSpin}_4\cong GL_2(\mb{F}_{q^2})^\circ=\{g\in GL_2(\mb{F}_{q^2}): \,\mr{det}(g)\in \mb{F}^\times_q\}, \quad \leftexp{2}{GSpin}_2\cong \mb{F}^\times_{q^2}.\edm 
We have:

\begin{center} \begin{tabular}{| c | c | c | c | c | } 
\hline
$\phi$ & $\rho_\lambda$   & $\ms{G}_\lambda^{F_\lambda}$  & $G^{F_\lambda}$  \\
\hline 
irred & $(1)$  & $GSpin_5(\mf{f})$ & $GSpin_5(k)$ \\
\hline
irred & $(-1)$  & $\leftexp{2}{GSpin}_4(\mf{f})$ & $GSpin_{4,1}(k)$ \\
\hline
$\phi_1\oplus\phi_2$ & $(1,1)$ & $GSpin_5(\mf{f})$ & $GSpin_5(k)$ \\
\hline
$\phi_1\oplus\phi_2$ & $(-1,-1)$  & $GSpin_4(\mf{f})$ &  $GSpin_5(k)$ \\
\hline
$\phi_1\oplus\phi_2$ & $(-1,1)$  & $[(\leftexp{2}{GSpin}_2\times GSpin_3)/\Delta GL_1](\mf{f})$ & $GSpin_{4,1}(k)$  \\
\hline
$\phi_1\oplus\phi_2$ & $(1,-1)$  & $[(\leftexp{2}{GSpin}_2\times GSpin_3)/\Delta GL_1](\mf{f})$ & $GSpin_{4,1}(k)$  \\
\hline
 \end{tabular} \end{center}
\vskip 10pt

The following table lists the data for the irreducible cuspidal Deligne-Lusztig representation $\epsilon_{\ms{T}}\epsilon_{\ms{G}} R_{\ms{T}_{\lambda}, \chi_{\lambda}^{0}}^{\ms{G}_{\lambda}}$ of $\ms{G}_\lambda^{F_\lambda}.$ The notation $+,-,\dag,\ddag$ is introduced in Section 5.3 and will be used in the remainder of the paper to denote the various cases.
The character $\chi_\lambda$ of $\ms{T}_\lambda^{F_\lambda}$ is given by an element $t$ in the $F^*_\lambda$-stable dual torus $\ms{T^*_\lambda}^{F^*_\lambda}.$ 
We give the element $s$ in the $F^*_w$-stable torus $\ms{T^*}^{F^*_w}$ such that $t$ is conjugate to $s$.
We have \bdm \tau\in \mb{F}_{q^4}^\times\setminus\mb{F}_{q^2}^\times,\edm and 
\bdm \tau_1,\tau_2\in\mb{F}_{q^2}^\times\setminus\mb{F}_q^\times,\quad N_{\mb{F}_{q^2}^\times/\mb{F}_q^\times}(\tau_1)=N_{\mb{F}_{q^2}^\times/\mb{F}_q^\times}(\tau_2)\quad\mr{and}\quad \tau_1\ne\tau_2,\tau_2^q.\edm


\renewcommand{\arraystretch}{1.6}
\begin{center}\begin{tabular}{| c | c | c | c | c |}
\hline
$\phi$ & $\pm R_{\ms{T}_\lambda^*}(t)$  & $s$  & $\ms{G}_\lambda^{F_\lambda}$ \\
\hline
irred & $R_\pi^+$    & $\mr{diag}(\tau,\tau^q,\tau^{q^3},\tau^{q^2})$ & $GSpin_5(\mf{f})$ \\
\hline  
irred & $R_\pi^\dag$  &  diag$(\tau,\tau^q,\tau^{q^3},\tau^{q^2})$  & $\leftexp{2}{GSpin}_4(\mf{f})$  \\
  \hline
$\phi_1\oplus\phi_2$ & $R_\pi^+$  & diag$(\tau_1,\tau_2,\tau_2^q,\tau_1^q)$  &  $GSpin_5(\mf{f})$  \\
 \hline
$\phi_1\oplus\phi_2$ & $R_\pi^-$  & diag$(\tau_1,\tau_2,\tau_2^q,\tau_1^q)$ & $GSpin_4(\mf{f})$  \\
 \hline
$\phi_1\oplus\phi_2$ & $R_\pi^\ddag$  & diag$(\tau_1,\tau_2,\tau_2^q,\tau_1^q)$  & $[(\leftexp{2}{GSpin}_2\times GSpin_3)/\Delta GL_1](\mf{f})$  \\
 \hline
$\phi_1\oplus\phi_2$ & $R_\pi^\ddag$  & diag$(\tau_2,\tau_1,\tau_1^q,\tau_2^q)$  & $[(\leftexp{2}{GSpin}_2\times GSpin_3)/\Delta GL_1](\mf{f})$  \\
  \hline
\end{tabular}\end{center}

For each $\rho_\lambda\in \mr{Irr}(B_{\phi})$ we have
\bdm \pi_\lambda=c-\mr{Ind}_{Z^F K_{\lambda}^{F_\lambda}}^{G^{F_\lambda}}\big(\chi_{\lambda}\otimes \epsilon_{\ms{T}}\epsilon_{\ms{G}} R_{\ms{T}_\lambda^*}^{\ms{G}_\lambda}(t) \big)\edm where $\pi_\lambda\in[\pi(\phi,\rho_\lambda)].$ Recall
$\Pi(\phi, \omega)=\{\pi(\phi,\rho):\rho\in\mathrm{Irr}(B_{\phi}), \omega_{\rho}=\omega\}.$
Let \[L_\phi^{DR}=\Pi(\phi, 1)\quad \mr{or}\quad \Pi(\phi, -1)\] be the $L$-packet of depth zero supercuspidal representations of $GSpin_5$ or $GSpin_{4,1}$, respectively, attached to the TRD parameter $\phi$ by the DeBacker-Reeder construction.

\section{The generalized principal series $I(s,\pi\boxtimes\sigma)$}

\subsection{Tame regular discrete series local Langlands for $GL_{2m}$}

Let $\phi=\phi_1\oplus\dots\oplus\phi_r$, where $r=1,2$, be a tame regular discrete series Langlands parameter for $GSpin_5$. Then, for $1\le i\le r$,
\bdm \phi_i: W_k\longrightarrow GSp_{2m}(\mb{C})\hookrightarrow GL_{2m}(\mb{C})=GL_{2m}(k)^\vee\edm is also Langlands parameter for $GL_{2m}(k)$.
By the local Langlands correspondence for $GL_{2m}$, let \bdm \phi_i \longleftrightarrow L_{\phi_i}=\{\sigma_{\phi_i}\},\edm where
the $L$-packets for $GL_{2m}$ are always of size one.

By Section 3, $\phi_i=\mr{Ind}_{W_{k_{2m}}}^{W_k} \eta,$ where $\eta$ is a regular character of $W_{k_{2m}}.$
By [He1], the representation $\sigma_{\phi_i}$ is the irreducible depth zero supercuspidal representation $\pi(\eta)\otimes \omega,$ where $\pi(\eta)$ is the representation attached to $\phi_i$ by Gerardin [Ge], and $\omega$ is the unramified character of order two of the extension $k_{2m}$ over $k$.

Under the Artin map, the character $\eta$ of $W_{k_{2m}}$ corresponds to a character, which we will also denote $\eta$, of $k_{2m}^\times.$ As $\eta$ is tame, $\eta\vert_{\mf{o}_{k_{2m}}^\times}$ factors through a character of $\mf{f}_{2m}^\times.$ We can view $\mf{f}_{2m}^\times\subset GL_{2m}(\mf{f})$ as the group of $\mf{f}$-points of a minisotropic torus $\ms{S}$ of $GL_{2m}(\bar{\mf{f}})$ which is defined over $\mf{f}.$ As $\eta$ is regular, as a character of $\mf{f}_{2m}^\times$ it is in general position, and the Deligne-Lusztig character $\epsilon_{\ms{G}}\epsilon_{\ms{S}}R_{\mf{f}_{2m}^\times,\eta}$ gives an irreducible cuspidal representation of $GL_{2m}(\mf{f}).$
Then \[ \pi(\eta)=c-\mr{Ind}_{k^\times GL_{2m}(\mf{o})}^{GL_{2m}(k)} \eta\otimes \epsilon_{\ms{G}}\epsilon_{\ms{S}}R_{\mf{f}_{2m}^\times,\eta}.\]

Let $\ms{T}\subset GL_{2m}(\bar{\mf{f}})$ be the split maximal torus. We can view the character $\eta$ as a regular element $t$ in the dual $F^*$ stable torus  $\ms{S^*}^{F^*},$ where $t$ is conjugate over $GL_{2m}(\bar{\mf{f}})$ to an element $s$ in the Coxeter torus $\ms{T^*}^{F_w^*}.$ Here the regular Frobenius action $F$ is twisted by the Coxeter element of the Weyl group, and we have the dual action
\[F_w^*(x_1,x_2,\dots,x_{2m})=(x_{2m}^q,x_1^q,\dots,x_{2m-1}^q).\]
Given a tame regular discrete series parameter $\phi$, we list the corresponding element $s$, where $\tau,\tau_1,$ and $\tau_2$ are as in the previous section.

\renewcommand{\arraystretch}{1.6}
\begin{center}\begin{tabular}{| c | c | c | c | c |}
\hline
$\phi_i$ & $\pm R_{\ms{S}^*}(t)$  & $s$  & $GL_{2m}(\mf{f})$ \\
\hline
4 dim & $R_\sigma^{+ \mr{or} \dag}$  & diag$(\tau, \tau^q, \tau^{q^2}, \tau^{q^3})$  & $GL_4(\mf{f})$ \\
 \hline
2 dim & $R_\sigma^{+,-,\mr{or}\ddag}$  & diag$(\tau_1, \tau_1^q)$  & $GL_2(\mf{f})$ \\
\hline
2 dim & $R_\sigma^{+,-,\mr{or}\ddag}$ & diag$(\tau_2, \tau_2^q)$ & $GL_2(\mf{f})$ \\

  \hline
\end{tabular}\end{center}

\subsection{The Bernstein component of $I(s,\pi\boxtimes\sigma)$}
 
From now on, let $G=GSpin_{4m+5}(k)$ or $GSpin_{2m+4,2m+1}(k)$.
In the following, we will always assume $m=1$ or $2.$
For simplicity of notation, let $n=2m+2.$
Let $P=M\cdot N$ be the maximal parabolic subgroup of $G$ with Levi factor 
\[M=GSpin_{5}(k)\times GL_{2m}(k) \quad\mr{if}\quad G=GSpin_{4m+5}(k);\]
\[M=GSpin_{4,1}(k)\times GL_{2m}(k) \quad\mr{if} \quad G=GSpin_{2m+4,2m+1}(k).\]
Note that in each case $P$ corresponds to the simple root $a_{2m}=e_{2m}-e_{2m+1}.$
If $\pi$ is an irreducible representation of $GSpin_5(k)$ or $GSpin_{4,1}(k),$ and $\sigma$ a representation of $GL_{2m}(k)$ we can form the generalized principal series representation
\[I(s,\pi\boxtimes\sigma)=\mr{Ind}_{P}^{G} \delta_{P}^{1/2} \pi\boxtimes\sigma\vert \mr{det} \vert^{s},\] where $s\in\mb{C}.$

Let $\mathscr{B}(G)$ be the set of classes of irreducible supercuspidal representations of rational Levi components of rational parabolic subgroups of $G$ under the equivalence given from $G$-conjugation and twisting by unramified quasicharacters of the Levi components. The inertial support  of an irreducible representation of $G$  is the inertial equivalence class $\mf{s}\in\mathscr{B}(G)$ of the support of the representation.
The theory of the Bernstein centre [Be] decomposes $\mf{R}(G)$, the category of smooth complex representations of $G$, into subcategories $\mf{R}^{\mf{s}}(G)$, where the objects of $\mf{R}^{\mf{s}}(G)$ are the smooth representations of $G$ all of whose irreducible subquotients have inertial support $\mf{s}.$
Any unramified quasicharacter $\chi$ of $GSpin_5 \times GL_{2m}$ or $GSpin_{4,1}\times GL_{2m}$ is of the form
\bdm\chi=\vert\mr{sim}\vert^t\boxtimes\vert\mr{det}\vert^s,\quad s,t\in\mb{C},\edm where $\mr{sim}$ and $\mr{det}$ are $k$-rational characters of $GSpin_5$ or $GSpin_{4,1}$ and $GL_{2m}$, respectively.
Therefore, the Bernstein component of $I(s,\pi\boxtimes\sigma)$ is
the set of all smooth representationss $\kappa$ of $G$ such that all the irreducible subquotients of $\kappa$ are a composition factor of a representation equivalent to
\bdm \mr{Ind}_{P}^{G}\delta_{P}^{1/2}\pi\vert \mr{sim}\vert^t \boxtimes\sigma\vert\mr{det}\vert^s,\edm for some $s,t\in\mb{C}.$

From now on let $I(s,\pi\boxtimes\sigma)$ be the generalized principal series where given a tame regular discrete series parameter $\phi$, $\pi\in L_\phi^{DR}$ where $L_\phi^{DR}$ is the $L$-packet of representations of $GSpin_5(k)$ or $GSpin_{4,1}(k)$ as given in Section 4.5, and  $\sigma=\sigma_{\phi_i}$ as given in Section 5.1. 

In [Mo1], Morris shows that if $\mc{P}$ is a parahoric subgroup of a connected reductive $k$-group $G$, where the reductive quotient of $\mc{P}$ is denoted $\ms{M},$ and $\rho$ is an irreducible cuspidal reprentation of $\ms{M}$, then $(\mc{P},\rho)$ is a $\mf{S}$-type for a finite set of inertial equivalence classes $\mf{S}\subset\mathscr{B}(G).$  Using this work of Morris, we give a parahoric subgroup $\mc{P}$ of $G$ and an irreducible cuspidal representation $\rho$ of the reductive quotient $\ms{M},$ such that $(\mc{P},\rho)$ is a $[M,\pi\boxtimes\sigma]_G$-type in $G.$ 
Then, using the theory of types and covers developed by Bushnell and Kutzko [BK2], we have that representations in the Bernstein component of $I(s,\pi\boxtimes\sigma)$ are parametrized by unital left modules of 
$\mc{H}(G, \rho)$ the Hecke algebra of compactly supported $\rho$-spherical functions on $G.$ 
Namely,
\bdm \mc{H}(G,\rho)=\{f\in\mc{C}_c^\infty(G,\mathrm{End}_{\mathbb{C}}(W^{\vee}))\vert\,
 f(k_{1}gk_{2})=\check{\rho}(k_{1})f(g)\check{\rho}(k_{2}), \quad k_{i}\in \mc{P}, \, g\in G\}, \edm where
$\mc{C}_c^\infty(G,\mathrm{End}_{\mathbb{C}}(W^{\vee}))$ is the set of functions $f:G\rightarrow\mathrm{End}_{\mathbb{C}}(W^{\vee})$ such that $f$ is locally constant with compact support and $(\check{\rho},W^{\vee})$ denotes the contragredient representation.
As algebras, \bdm\mc{H}(G,\rho)\cong\mathrm{End}_{G}(c-\mathrm{Ind}_{\mc{P}}^{G}(\rho)).\edm

\subsection{Definition of $\mc{P}$}

The vertices of the local Dynkin diagram for $G=GSpin_{4m+5}$ are in correspondence with the set
\bdm \Pi^+=\{\alpha_{0^+}=-e_{1}-e_{2} + 1,\alpha_{1^+}=e_1-e_2,\alpha_{2^+}=e_2-e_3,\dots,\alpha_{n-1^+}=e_{n-1}-e_n,\alpha_{n^+}=e_n\}.\edm
By Bruhat-Tits theory, let $\mc{K}_{i^+}$ be the parahoric subgroup of $GSpin_{4m+5}(k)$ corresponding to the root $\alpha_{i^+}$ in the local Dynkin diagram.
Let \bdm\mc{P}^+=\mc{K}_{0^+}\cap\mc{K}_{2m^+}.\edm
Another choice of simple roots for $GSpin_9$ is
\[\Delta^-=\{e_3-e_4, e_4-e_1, e_1-e_2, e_2\}.\]
Let $\alpha_{0^-}=-e_3-e_4+1$ where $-e_3-e_4$ is the lowest root. Let $\mc{K}_{i^-}$ be the parahoric subgroup of $GSpin_9(k)$ corresponding to the root $\alpha_{i^-}$ in the local Dynkin diagram corresponding to the set \bdm\Pi^-=\{\alpha_{0^-}=-e_3-e_4+1, \alpha_{1^-}=e_3-e_4, \alpha_{2^-}=e_4-e_1, \alpha_{3^-}=e_1-e_2, \alpha_{4^-}=e_2\}.\edm 
Let \bdm\mc{P}^-=\mc{K}_{2^-}\cap\mc{K}_{4^-}.\edm


The vertices of the relative local Dynkin diagram for $G=GSpin_{2m+4,2m+1}$ are in correspondence with the set
\bdm\{\alpha_0=1-e_1, \alpha_1=e_1-e_2, \alpha_2=e_2-e_3, \dots, \alpha_{2m}=e_{2m}-e_{2m+1}, \alpha_{2m+1}=e_{2m+1}\}.\edm
Note that this is a set of simple affine roots for an affine Weyl group of type $C_{2m+1}.$ 
Another choice of simple roots for $S$, where $S$ is the maximal $k$-split torus of $GSpin_{8, 5}$ gives us another set of simple affine roots 
\bdm \Pi^\dag=\{\alpha_{0^\dag}=1-e_5, \alpha_{1^\dag}=e_5-e_1, \alpha_{2^\dag}=e_1-e_2, \alpha_{3^\dag}=e_2-e_3, \alpha_{4^\dag}=e_3-e_4, \alpha_{5^\dag}=e_4\}.\edm
Let \bdm\mc{P}^\dag=\mc{K}_{1^\dag}\cap\mc{K}_{5^\dag}.\edm
The vertices of the local Dynkin diagram for $GSpin_{6,3}$ are in correspondence with the set
\bdm \Pi^\ddag=\{\alpha_{0^\ddag}=1-e_1, \alpha_{1^\ddag}=e_1-e_2, \alpha_{2^\ddag}=e_2-e_3, \alpha_{3^\ddag}=e_3\}.\edm
Let \bdm\mc{P}^\ddag=\mc{K}_{0^\ddag}\cap\mc{K}_{2^\ddag}.\edm

Denote $\mathcal{P}=\mathcal{P}^{a},a=+,-,\dag, \ddag,$ depending on the context, so
\bdm\mathcal{Q}=\mathcal{P}\cap M=K_\lambda^{F_\lambda}\times GL_{2m}(\mf{o}).\edm
Let $\mathsf{M}=\mathsf{G}_{\lambda}^{F_\lambda}\times GL_{2m}(\mathfrak{f})$ be the reductive component of the reduction mod $\mf{p}$ of $\mathcal{P}$ which is equal to the reductive component of the reduction mod $\mf{p}$ of $\mathcal{Q}.$
If $\mc{P}=\mc{P}^a,$ denote by
\bdm \rho=R_\pi\boxtimes R_\sigma=R_\pi^a\boxtimes R_\sigma^a,\quad a=+,-,\dag,\ddag, \edm
a representation of $\ms{M}$, where the $R^a$ are given in the tables in Section 4.5 and Section 5.1.
We can view $\rho$ as a representation of $\mathcal{P}$ via inflation. We can also view $\rho$ as a representation of $\mathcal{Q}$ via inflation. We denote this representation of $\mc{Q}$ by $\rho_{M}.$

\begin{lemma} The pair $(\mathcal{Q},\rho_{M})$ is a type for the inertial class $[M,\pi\boxtimes\sigma]_{M}$ in $M$ and $(\mathcal{P},\rho)$ is a $G$-cover for it. Therefore, the pair $(\mathcal{P},\rho)$ is a type for the inertial class $[M,\pi\boxtimes\sigma]_{G}$ in $G.$ \end{lemma}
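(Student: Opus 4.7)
The plan is to verify two things: that $(\mc{Q},\rho_M)$ is an $[M,\pi\boxtimes\sigma]_M$-type in $M$, and that $(\mc{P},\rho)$ is a $G$-cover of it. The final assertion — that $(\mc{P},\rho)$ is itself an $[M,\pi\boxtimes\sigma]_G$-type in $G$ — is then automatic from Bushnell-Kutzko [BK2, Thm~8.3].

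For the first claim, I would exploit the product decompositions $\mc{Q}=K_\lambda^{F_\lambda}\times GL_{2m}(\mf{o})$ and $\rho_M=R_\pi\boxtimes R_\sigma$, both compatible with the Levi decomposition of $M$, so that it suffices to check each factor independently: $(K_\lambda^{F_\lambda},R_\pi)$ for the inertial class of $\pi$ in $GSpin_5(k)$ or $GSpin_{4,1}(k)$, and $(GL_{2m}(\mf{o}),R_\sigma)$ for the inertial class of $\sigma$ in $GL_{2m}(k)$. By construction in Sections~4.5 and~5.1, the supercuspidals $\pi$ and $\sigma$ are compactly induced from their parahorics from the central characters $\chi_\lambda$ and $\eta$ tensored with the Deligne-Lusztig cuspidal representations $R_\pi$ and $R_\sigma$ of the reductive quotients; these are exactly the depth-zero types of Morris [Mo1], which yields the type property for each factor.

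For the $G$-cover property I would check the three axioms of [BK2, \S 8]. By design, $\mc{P}^a$ is the intersection of two maximal parahorics corresponding to two nodes of the local Dynkin diagram chosen so that $\mc{P}$ admits an Iwahori decomposition with respect to $(M,P)$, with $\mc{P}\cap N$ and $\mc{P}\cap\bar{N}$ contained in the pro-unipotent radical $\mc{P}^+$; since $\rho$ is inflated from $\ms{M}=\mc{P}/\mc{P}^+$, it is trivial on these unipotent pieces, establishing axioms (i) and (ii). Axiom (iii), the invertibility in $\mc{H}(G,\rho)$ of a Hecke operator attached to a strongly $(P,\rho)$-positive element in the center of $M$, is exactly the criterion that Kutzko and Morris [KM] verify for depth-zero parahoric covers; in this setting the condition reduces to a purely structural statement on the two maximal parahorics of $G$ containing $\mc{P}^a$, namely that in each of their reductive quotients $\ms{M}$ appears as the Levi of a proper parabolic on which $\rho$ is cuspidal.

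The main obstacle is this last structural verification, which must be carried out uniformly across the four cases $a=+,-,\dag,\ddag$ indexing the DeBacker-Reeder packet members and their containing inner form. In each case one must identify the two maximal parahorics $\mc{K}_{i^a}$, $\mc{K}_{j^a}$ whose intersection defines $\mc{P}^a$, read off the root data of the corresponding reductive quotients (using the appendix for the $GSpin$-type groups of the various ranks and signatures), and confirm that $\ms{G}_\lambda^{F_\lambda}\times GL_{2m}(\mf{f})$ sits inside each as a Levi with $R_\pi\boxtimes R_\sigma$ cuspidal on it. Once these structural checks are carried out case by case, [KM] supplies the required invertible positive Hecke operator, completing the cover property and hence the lemma.
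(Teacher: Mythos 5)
Your overall skeleton matches the paper's: establish that $(\mc{Q},\rho_M)$ is a type, establish that $(\mc{P},\rho)$ is a $G$-cover, and then conclude via [BK2, Thm 8.3]. The details differ, though. For the type property on the Levi, the paper does not reduce to the two factors separately; it simply observes that $((\chi_\lambda\otimes R_\pi)\boxtimes(\eta\otimes R_\sigma))|_{\mc{Q}}=\rho_M$ is irreducible and invokes [BK2, Prop 5.4], which states that for a supercuspidal constructed by compact induction, restricting the inducing data to the maximal compact part produces a type. Your factor-by-factor reduction is mathematically sound but more elaborate than needed, since the irreducibility criterion already applies directly to the product.

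For the cover property, the paper again dispatches it in one line by citing [Mo1, pg.~149], where Morris proves that a parahoric $\mc{P}$ together with an irreducible cuspidal representation of its reductive quotient, inflated to $\mc{P}$, is a $G$-cover of the analogous pair on the Levi. Your proposal instead unwinds the three axioms of [BK2, \S 8] and attributes the invertibility axiom to [KM]. This is a misattribution: [KM] does not prove the cover property — it \emph{assumes} the depth-zero cover from [Mo1] and uses it as input to present the Hecke algebra. The structural condition you describe (that $\ms{M}$ sits as a Levi of a proper parabolic in each maximal reductive quotient, with $\rho$ cuspidal) is exactly what Morris verifies, so the mathematics underlying your sketch is fine, but the chain of citations would need to route through [Mo1] rather than [KM] to be correct. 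Also note that the case-by-case check you flag as a "main obstacle" is not an obstacle the paper confronts at this point — Morris's theorem is stated in enough generality to cover all four cases $a=+,-,\dag,\ddag$ at once, so no separate verification is needed in the proof of this lemma.
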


\begin{proof}
Since $((\chi_\lambda\otimes R_\pi)\boxtimes(\eta\otimes R_\sigma))\vert_{\mc{Q}}=\rho_M$ is irreducible, 
by [BK2, Prop 5.4] $(\mathcal{Q},\rho_{M})$ is a type for the inertial class $[M,\pi\boxtimes\sigma]_{M}$ in $M$.
It is shown in [Mo1, pg. 149] that $(\mathcal{P},\rho)$ is a $G$-cover for $(\mathcal{Q},\rho_{M})$. Then, by [BK2, Thm 8.3], $(\mathcal{P},\rho)$ is a type for the inertial class $[M,\pi\boxtimes\sigma]_{G}$ in $G.$ 
\end{proof}

\begin{corollary} The Hecke algebra of the Bernstein component of $I(s,\pi\boxtimes\sigma)$ is $\mc{H}(G,\rho).$
\end{corollary}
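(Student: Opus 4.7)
The plan is to deduce this immediately from Lemma 5.1 together with the foundational equivalence of categories supplied by the Bushnell--Kutzko theory of types. First, recall from Section 5.2 that the Bernstein component containing $I(s,\pi\boxtimes\sigma)$ is the subcategory $\mf{R}^{\mf{s}}(G)$ where the inertial class is
\[
\mf{s}=[M,\pi\boxtimes\sigma]_G,
\]
since the only unramified quasicharacters of $M=GSpin_5(k)\times GL_{2m}(k)$ (resp.\ $GSpin_{4,1}(k)\times GL_{2m}(k)$) are of the form $|\mr{sim}|^t\boxtimes|\mr{det}|^s$, and these are precisely the twists that arise in the family $\mr{Ind}_P^G\delta_P^{1/2}\pi|\mr{sim}|^t\boxtimes\sigma|\mr{det}|^s$.

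Next, I would invoke [BK2, Thm 4.3]: whenever $(\mc{P},\rho)$ is an $\mf{s}$-type, the functor
\[
V\longmapsto \mr{Hom}_{\mc{P}}(\rho,V)\;\cong\;V^{\rho}
\]
is an equivalence of categories between $\mf{R}^{\mf{s}}(G)$ and the category of unital left $\mc{H}(G,\rho)$-modules, where $\mc{H}(G,\rho)$ is defined as the convolution algebra of compactly supported $\rho$-spherical functions described above. By Lemma 5.1, $(\mc{P},\rho)$ is exactly such an $\mf{s}$-type for $\mf{s}=[M,\pi\boxtimes\sigma]_G$, so the equivalence applies directly.

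Finally, I would recall the identification $\mc{H}(G,\rho)\cong \mr{End}_G(c\text{-}\mr{Ind}_{\mc{P}}^G\rho)$ as algebras, which is built into the definition of $\mc{H}(G,\rho)$ and is precisely the endomorphism algebra presentation the author will use in Section 6 to compute generators and parameters. Combining these observations, the Hecke algebra attached to the Bernstein component containing $I(s,\pi\boxtimes\sigma)$ is canonically identified with $\mc{H}(G,\rho)$, as claimed. There is no genuine obstacle here; the corollary is a citation-level consequence once Lemma 5.1 is in hand, and its only role is to set up the Hecke-algebraic framework in which the reducibility analysis of the next section will take place.
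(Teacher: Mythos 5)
Your proof is correct and follows essentially the same route as the paper: invoke Lemma 5.1 to know $(\mc{P},\rho)$ is an $[M,\pi\boxtimes\sigma]_G$-type, then apply the Bushnell--Kutzko equivalence $\mf{R}^{\mf{s}}(G)\cong\mc{H}(G,\rho)\text{-Mod}$ via the functor $V\mapsto V_\rho$. The paper compresses this to a single line citing the functor $M_\rho$; your elaboration adds no new ideas but also introduces no gaps.
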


\begin{proof} We have
\bdm\mf{R}^{[M,\pi\boxtimes\sigma]_{G}}(G)=\mf{R}_{\rho}(G)\cong \mc{H}(G,\rho)\mathrm{-Mod},\edm where $[M,\pi\boxtimes\sigma]_{G}$ is the inertial support of $I(s,\pi\boxtimes\sigma).$ The last equivalence is given by the functor $M_\rho$ restricted to $\mf{R}_{\rho}(G)$.
\end{proof}

\section{The Hecke algebra $\mc{H}(G,\rho)$}

\subsection{A presentation of $\mc{H}(G,\rho)$}

In [Mo2], Morris describes explicit generators and relations for $\mc{H}(G,\rho)$ when $\rho$ is an irreducible cuspidal representation of the reductive quotient of a parahoric subgroup $\mc{P}$ of $G$. Let $\Pi$ be a set of simple affine roots in correspondence with the vertices of the relative local Dynkin diagram for $G$. 
For $\Theta\subset\Pi$ define $W_\Theta=\langle s_\alpha\,\vert\, \alpha\in\Theta\rangle.$
Define \bdm S_\Theta=\{w\in N_W(W_\Theta)\,\vert\, w\Theta=\Theta\},\edm where $N_W(W_\Theta)$ is the normalizer of $W_\Theta$ in the affine Weyl group $W$. Let $\mc{P}$ corresponds to $\Theta\subset\Pi$, and let $\ms{M}$ be the reductive quotient of $\mc{P}.$
By [Mo2, \S 4] $\mc{H}(G,\rho)$ is supported on double cosets $\mc{P}\dot{w}\mc{P}$ where $\dot{w}\in N_G(T)$
 such that under the induced action on $\ms{M}$, $\leftexp{\dot{w}}{\ms{M}}=\ms{M}$ as representations of $\ms{M}$,\,$\dot{w}\rho\cong\rho$, and $\dot{w}$ projects to an element $w\in S_\Theta.$

Suppose that $J\subset\Pi$ such that $wJ=\Theta$ for some $w\in W$, and $\alpha\in \Pi$.
Let $t$ be the longest element in the Weyl group $W_J$ corresponding to the spherical root system obtained from $J$ such that $t^2=1$ and $t(J)=-J$. Let $u$ be the longest element in the Weyl group $W_{J\cup\{\alpha\}}$ corresponding to the spherical root system obtained from $J\cup\{\alpha\}$ such that $u^2=1$ and $u(J\cup\{\alpha\})=-(J\cup\{\alpha\})$. Set \bdm v[\alpha,J]=u\cdot t\in W_{J\cup\alpha}\subset W.\edm

We now specialize to the case of interest.
Denote by $\Theta=\Theta^+\subset\Pi^+,\Theta^- \subset\Pi^-,\Theta^\dag\subset\Pi^\dag,\Theta^\ddag\subset\Pi^\ddag:$ 
\bdm \Theta^+=\{\alpha_{0^+},\dots,\alpha_{2m+2^+}\}\setminus \{\alpha_{0^+},\alpha_{2m^+}\},\quad \Theta^-=\{\alpha_{0^-},\dots,\alpha_{4^-}\}\setminus\{\alpha_{2^-},\alpha_{4^-}\},\edm
\bdm \Theta^\dag=\{\alpha_{0^\dag},\dots,\alpha_{5^\dag}\}\setminus\{\alpha_{1^\dag},\alpha_{5^\dag}\},\quad \Theta^\ddag=\{\alpha_{0^\ddag},\dots\alpha_{3^\ddag}\}\setminus\{\alpha_{0^\ddag},\alpha_{2^\ddag}\}.\edm


\begin{lemma} 
\begin{enumerate}
\item[\textup{(}i\textup{)}] $S_{\Theta^+}=\langle v[\alpha_{0^+},\Theta^+],v[\alpha_{2m^+},\Theta^+], T(e_0^*) \rangle;$ 
\item[\textup{(}ii\textup{)}] $S_{\Theta^-}=\langle v[\alpha_{2^-},\Theta^-],v[\alpha_{4^-},\Theta^-],\nu \rangle$ 
where
$\nu=T(e_3^*)s_{\alpha_{1^-}}s_{\alpha_{2^-}}s_{\alpha_{3^-}}s_{\alpha_{4^-}}s_{\alpha_{3^-}}s_{\alpha_{2^-}}s_{\alpha_{1^-}},$ 
is a diagram automorphism preserving the fundamental chamber corresponding to $\Pi^-$ such that $\nu^2=T(e_0^*)$ is translation in the central direction;
\item[\textup{(}iii\textup{)}] $S_{\Theta^\dag}=\langle v[\alpha_{1^\dag},\Theta^\dag],v[\alpha_{5^\dag},\Theta^\dag], T(e_0^*)\rangle;$
\item[\textup{(}iv\textup{)}] $S_{\Theta^\ddag}=\langle v[\alpha_{0^\ddag},\Theta^\ddag],v[\alpha_{2^\ddag},\Theta^\ddag], T(e_0^*)\rangle.$
\end{enumerate} 
\noindent In all cases the elements $v[\alpha_i,\Theta]$ are involutions, and $T(e_0^*)$ is translation by $e_0^*.$
\end{lemma}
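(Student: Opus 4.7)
The plan is a case-by-case verification using the structure of the affine Weyl group and the combinatorics of the relevant local Dynkin diagrams.

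I first show that $T(e_0^*)$ lies in $S_\Theta$ in every case. Since $e_0^*$ is the cocharacter corresponding to the central $GL_1$ inside $GSpin$, we have $\langle a, e_0^*\rangle = 0$ for every linear root $a$ of the derived subgroup. Consequently translation by $e_0^*$ acts as the identity on the set of affine roots: it fixes $\Theta$ pointwise and commutes with every $s_\alpha$ for $\alpha\in\Theta$, so it centralizes $W_\Theta$ and lies in $S_\Theta$.

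Next I verify that each $v[\alpha_i,\Theta]$ with $\alpha_i\in\Pi\setminus\Theta$ is an involution lying in $S_\Theta$. Because $\Theta\cup\{\alpha_i\}$ is a proper subset of the affine diagram, it spans a finite-type root system, so the longest elements $t\in W_\Theta$ and $u\in W_{\Theta\cup\{\alpha_i\}}$ exist and are involutions. I check in each of (i)--(iv) that every irreducible component of $\Theta$ and of $\Theta\cup\{\alpha_i\}$ is of a type whose longest element acts as $-1$ (namely $A_1$, $B_n$, $C_n$, or $D_{2n}$); in particular $u(\alpha_i)=-\alpha_i$, so $v[\alpha_i,\Theta]$ preserves $\Theta$ setwise. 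A routine calculation using that $t$ and $u$ coincide on the irreducible components of $\Theta$ disjoint from $\alpha_i$, and differ on the remaining component only by the extra reflection in $\alpha_i$, shows $tu=ut$, hence $v[\alpha_i,\Theta]^2=1$.

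Generation is established using the presentation of the extended affine Weyl group $\widetilde{W}=X_{ad}^\vee\rtimes W_{o}$, which contains $W$ and acts on the affine Dynkin diagram through the quotient $\widetilde{W}/W$, the group of diagram automorphisms. Any $w\in S_\Theta$ decomposes modulo $W_\Theta$ into a diagram automorphism fixing $\Theta$ setwise together with a translation preserving $\Theta$. In cases (i), (iii), (iv), the only translations preserving $\Theta$ modulo the coroot lattice of $\Theta$ are integer multiples of $e_0^*$, and the diagram automorphisms stabilizing $\Theta$ are generated by the two $v[\alpha_i,\Theta]$. In case (ii), the relative local Dynkin diagram of $GSpin_{4,1}$ admits an additional non-trivial automorphism $\nu$ arising from the non-split inner form; its explicit formula comes from tracking the element of $\widetilde{W}$ that cyclically shifts $\Pi^-$ with a half-step translation, and the identity $\nu^2=T(e_0^*)$ is verified by collapsing the resulting word of reflections using the braid relations of $W$.

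The main obstacle is case (ii): one must correctly identify the outer diagram automorphism $\nu$, verify that the stated product of a translation by $e_3^*$ with seven simple reflections does normalize $W_{\Theta^-}$ and preserve $\Theta^-$ setwise, and then establish $\nu^2=T(e_0^*)$ by simplifying a long word in $W$. Cases (i), (iii), and (iv) then reduce to routine inspections of $\tilde{B}$- and $\tilde{C}$-type affine Dynkin diagrams.
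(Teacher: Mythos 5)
Your overall strategy — show $T(e_0^*)$ is central, show the $v[\alpha_i,\Theta]$ are involutions preserving $\Theta$, then argue generation — matches the shape of the paper's argument, but two steps have real gaps.

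First, your argument that $tu=ut$ because $t$ and $u$ "differ on the remaining component only by the extra reflection in $\alpha_i$" is false: the longest element of $W_{\Theta\cup\{\alpha_i\}}$ is not $t\cdot s_{\alpha_i}$ (take $\Theta=\{\alpha_1\}$ inside $B_2$, where $u$ has length $4$). The observation you make just before it — that every relevant component is of a type whose longest element acts as $-1$ — is the right one, and the correct conclusion is that $u$ is then \emph{central} in $W_{\Theta\cup\{\alpha_i\}}$, so it commutes with $t\in W_\Theta\subset W_{\Theta\cup\{\alpha_i\}}$. The paper simply cites [Mo2, Lem 2.4(c)] for this.

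Second, and more seriously, the generation step is not sound as written. You claim that any $w\in S_\Theta$ "decomposes modulo $W_\Theta$ into a diagram automorphism fixing $\Theta$ setwise together with a translation preserving $\Theta$," and that "the diagram automorphisms stabilizing $\Theta$ are generated by the two $v[\alpha_i,\Theta]$." This conflates two different kinds of objects: the $v[\alpha_i,\Theta]$ are elements of the affine Weyl group $W$, while diagram automorphisms are the length-zero elements of $\Omega=\{w\in W: wC=C\}$; the proposed "diagram automorphism times translation" decomposition of an arbitrary $w\in S_\Theta$ is not a standard fact and is not established. The paper's argument instead invokes [Mo2, Lem 2.5], which writes any $w$ with $w\Theta=\Theta$ as $\nu\,v[\alpha_r,J_r]\cdots v[\alpha_1,J_1]$ for a chain $\Theta=J_1,\dots,J_{r+1}$ with $\nu J_{r+1}=\Theta$, $\nu\in\Omega$. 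The pivotal observation — which your proposal misses — is that because $v[\alpha_i,\Theta]\Theta=\Theta$ for \emph{every} $\alpha_i\in\Pi\setminus\Theta$, the chain collapses to $J_1=\cdots=J_{r+1}=\Theta$, so $w$ is literally a word in $v[\alpha_a,\Theta]$, $v[\alpha_b,\Theta]$ and an element of $\Omega$ preserving $\Theta$. One then identifies $\Omega=\langle\nu\rangle$ explicitly in each case, checking (for (i), (iii), (iv)) that the generator of $\Omega$ preserving $\Theta$ is $\nu^2=T(e_0^*)$, and (for (ii)) that $\nu$ itself preserves $\Theta^-$. Without the chain-collapse argument, your generation claim is a gap.
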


\begin{proof}  
For $J\subset \Pi$, corresponding to a connected piece of the extended Dynkin diagram of type $B_n, C_n, D_n(n$ even), $u(\alpha_j)=-\alpha_j$ for $\alpha_j\in J$. Here, $u$ is the longest element in the Weyl group $W_J$ defined above. As the piece of the extended Dynkin diagram corresponding to $\Theta^+\cup\{\alpha_{2m^+}\}$ is of type $B_{2m+2}$, 
\bdm v[\alpha_{2m^+},\Theta^+]\Theta^+=\Theta^+.\edm  
By [Ho, Lem 10], $v[\alpha_{0^+},\Theta^+]=v[\alpha_{0^+},\{\alpha_{1^+},\dots,\alpha_{2m-1^+}\}]$, therefore for $b\in \{\alpha_{2m+1^+},\alpha_{2m+2^+}\},$ $v[\alpha_{0^+},\Theta^+](b)=b$, and for $b\in\{\alpha_{1^+},\dots,\alpha_{2m-1^+}\}, v[\alpha_{0^+},\Theta^+](b)=b$, so 
\bdm v[\alpha_{0^+},\Theta^+]\Theta^+=\Theta^+\edm $(\{\alpha_{1^+},\dots,\alpha_{2m-1^+}\}\cup\{\alpha_{0^+}\}$ corresponds to a diagram of type $D_{2m}).$ Since $v[\alpha_i,\Theta^+]\Theta^+=\Theta^+$ for $i=0^+,2m^+$, By [Mo2, Lem 2.4(c)] they are involutions.

Now, by [Mo2, Lem 2.5], if $w\in W$ such that $w\Theta=\Theta,$ we can find
$J_1,\dots,J_{r+2},$ where $J_i\subset \Pi$ and $\Theta=J_1=J_{r+2},$ and $\alpha_1,\dots\alpha_r\in\Pi$ such that  $v[\alpha_i,J_i]J_i=J_{i+1}, 1\leq i\leq r,$ and $w=\nu v[\alpha_r,J_r]\dots v[\alpha_1,J_1]$ where $\nu\in\Omega, \nu J_{r+1}=J_{r+2}=\Theta.$ Here, $\nu\in\Omega=\{w\in W\,\vert\, wC=C\}$ the set of diagram automorphisms that fix the fundamental chamber $C$.
For $GSpin_{4m+5}$, for the simple roots given by $\Delta^+$, the set $\Omega=\langle \nu\rangle$, where \bdm \nu=T(e_1^*)s_{\alpha_{1^+}}\dots
s_{\alpha_{n-1^+}}s_{\alpha_{n^+}}s_{\alpha_{n-1^+}}\dots s_{\alpha_{1^+}}\edm is a diagram automorphism preserving the fundamental chamber given by $\Delta^+$. We have $\nu^2=T(e_0^*)$ is translation in the central direction. The action of $\nu$ on the simple affine roots is as follows, \bdm \nu\cdot \alpha_{0^+}=\alpha_{1^+}, \nu\cdot\alpha_{1^+}=\alpha_{0^+}, \nu\cdot\alpha_{i^+}=\alpha_{i^+},\, i>1.\edm Therefore, $\nu$ does not preserve $\Theta^+.$ However, $\nu^2=T(e_0^*)$ does preserve $\Theta^+.$ Since $v[\alpha_i,\Theta^+]\Theta^+=\Theta^+$ for all $\alpha_i\in \Pi^+\setminus \Theta^+$, $J_1=J_2=\dots=J_{r+1}=\Theta^+$.  
 Therefore, if $w\Theta^+=\Theta^+$, $w$ is a word in $v[\alpha_{0^+},\Theta^+], v[\alpha_{2m^+},\Theta^+],$ and $T(e_0^*).$ This shows (i).

For cases (ii), (iii), (iv), the proof is as in (i).
As the piece of the extended Dynkin diagram corresponding to $\Theta^-\cup\{\alpha_{2^-}\}$ is of type $D_4$, 
$v[\alpha_{2^-},\Theta^-]\Theta^-=\Theta^-.$
By [Ho, Lem 10], $v[\alpha_{4^-},\Theta^-]=v[\alpha_{4^-},\{\alpha_{3^-}\}]$ and $v[\alpha_{4^-},\Theta^-]\Theta^-=\Theta^-$ ($\{\alpha_{3^-}\}\cup\{\alpha_{4^-}\}$ corresponds to a diagram of type $B_2$). 
As the piece of the extended Dynkin diagram corresponding to $\Theta^\dag\cup\{\alpha_{1^\dag}\}$ is of type $B_5$, 
$v[\alpha_{1^\dag},\Theta^\dag]\Theta^\dag=\Theta^\dag.$
By [Ho, Lem 10], $v[\alpha_{5^\dag},\Theta^\dag]=v[\alpha_{5^\dag},\{\alpha_{2^\dag},\alpha_{3^\dag},\alpha_{4^\dag}\}]$ and $v[\alpha_{5^\dag},\Theta^\dag]\Theta^\dag=\Theta^\dag$ ($\{\alpha_{2^\dag},\alpha_{3^\dag},\alpha_{4^\dag}\}\cup\{\alpha_{5^\dag}\}$ corresponds to a diagram of type $B_4$).
As the piece of the extended Dynkin diagram corresponding to $\Theta^\ddag\cup\{\alpha_{2^\ddag}\}$ is of type $B_3$, 
$v[\alpha_{2^\ddag},\Theta^\ddag]\Theta^\ddag=\Theta^\ddag.$
By [Ho, Lem 10], $v[\alpha_{0^\ddag},\Theta^\ddag]=v[\alpha_{0^\ddag},\{\alpha_{1^\ddag}\}]$ and $v[\alpha_{0^\ddag},\Theta^\ddag]\Theta^\ddag=\Theta^\ddag$ ($\{\alpha_{1^\ddag}\}\cup\{\alpha_{0^\ddag}\}$ corresponds to a diagram of type $B_2$).

When the simple roots are given by $\Delta^-$, the set of diagram automorphisms $\Omega=\langle \nu\rangle$, where \bdm\nu=T(e_3^*)s_{\alpha_{1^-}}s_{\alpha_{2^-}}s_{\alpha_{3^-}}s_{\alpha_{4^-}}s_{\alpha_{3^-}}s_{\alpha_{2^-}}s_{\alpha_{1^-}} \edm is a diagram automorphism preserving the fundamental chamber given by $\Delta^-$ such that $\nu^2=T(e_0^*)$ is translation in the central direction. The action of $\nu$ on the simple roots is as follows, \bdm \nu\cdot \alpha_{0^-}=\alpha_{1^-}, \nu\cdot\alpha_{1^-}=\alpha_{0^-}, \nu\cdot\alpha_{i^-}=\alpha_{i^-}, i>1.\edm Therefore in case (ii), $\nu$ does preserve $\Theta^-.$
In cases (iii) and (iv), $\Omega=\langle T(e_0^*)\rangle$ and $T(e_0^*)$ preserves both $\Theta^\dag$ and $\Theta^\ddag.$ 
\end{proof}

\begin{lemma} 
\begin{itemize}
\item[\textup{(}i\textup{)}] For $i=0^+,2m^+$ if $\Theta=\Theta^+$, $i=2^-,4^-$ if $\Theta=\Theta^-,$ $i=1^\dag,5^\dag$ if $\Theta=\Theta^\dag$, and $i=0^\ddag,2^\ddag$ if $\Theta=\Theta^\ddag,$ we have 
\bdm v[\alpha_i,\Theta]\cdot \rho\cong\rho,\edm for $\rho=R_\pi\boxtimes R_\sigma$ a representation of  $\ms{M}$ as given in Section 5.3.
\item[\textup{(}ii\textup{)}] In addition, for $i=2^-,4^-$, \bdm \nu\cdot\rho\ncong\rho,\edm for $\rho=R_\pi^-\boxtimes R_\sigma^-$ a representation of $\ms{M}=GSpin_4(\mf{f})\times GL_2(\mf{f})$. 
\end{itemize} \end{lemma}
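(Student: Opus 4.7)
The plan is to translate each claim into a computation on the Deligne--Lusztig parameter of $\rho$. Since $v[\alpha_i,\Theta]$ (resp.\ $\nu$) lies in $N_W(W_\Theta)$ and preserves $\Theta$ setwise, a lift to $N_G(T)\cap \mc{P}$ induces by conjugation an automorphism of the reductive quotient $\ms{M}$, and $w\cdot\rho$ is pullback of $\rho$ along this automorphism. Because $\rho=R_\pi\boxtimes R_\sigma$ is (up to sign) a product of cuspidal Deligne--Lusztig representations in general position, by the Deligne--Lusztig classification $w\cdot\rho\cong\rho$ if and only if the induced action on the semisimple parameter $(s,s')$ in the dual Coxeter tori, as listed in the tables of Section 4.5 and Section 5.1, keeps it in the same conjugacy class under $W(\ms{M}^*)^{F^*}$.

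For part (i), I would work through the eight pairs $(\alpha_i,\Theta)$ in turn. Using the factorization $v[\alpha_i,\Theta]=u\cdot t$ recalled in the proof of Lemma 6.1, together with the type list there (each $\Theta\cup\{\alpha_i\}$ is of type $B$, $C$, or $D_{\mr{even}}$, so $u$ acts as $-1$; and the various $\Theta$ are of type $A_{2m-1}\times B_2$, $B_2$, $B_4$, $B_3$ case by case), one computes the induced action on $X_*(T)$ in the $e_i$-coordinates and then on the diagonal parameter $s$. The decisive input is the symplectic self-duality of Lemma 3.2(iii), $\eta\cdot\eta^{\mr{Frob}^d}\cong\lambda\vert_{W_{k_{2d}}}$, which forces $\tau^{-1}$ (resp.\ $\tau_i^{-1}$) to lie in the $\langle F^*\rangle$-orbit of $\tau$ (resp.\ $\tau_i$); this is what makes any inversion produced by the longest-element action absorbable into the dual Weyl and Frobenius actions. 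The analogous check on the $GL_{2m}$ factor uses the same self-duality of the restriction of $\eta$ to $W_{k_{2m}}$.

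For part (ii), the element $\nu$ projects to an outer automorphism of $\ms{G}_\lambda=GSpin_4$: via the isogeny $Spin_4\cong SL_2\times SL_2$, it is the involution swapping the two $SL_2$ factors. Its action on the Coxeter-torus parameter $s=\mr{diag}(\tau_1,\tau_2,\tau_2^q,\tau_1^q)$ of $R_\pi^-$ exchanges $\{\tau_1,\tau_1^q\}$ with $\{\tau_2,\tau_2^q\}$. Since this swap lies outside the Weyl group of $GSpin_4^*$, and the regularity hypothesis $\tau_1\ne\tau_2,\tau_2^q$ on the tame packet data precludes any coincidental conjugacy, the image is in a different class from $s$; hence $\nu\cdot\rho\ncong\rho$.

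The main obstacle is organizational: in each of the four cases $+,-,\dag,\ddag$ one must match the action induced on the finite reductive quotient with the concrete shape of the Coxeter-torus parameter $s$, and keep track of which parahoric, which dual torus, and which Frobenius are in play. Once the dictionary between the affine Weyl group data of Lemma 6.1 and the Deligne--Lusztig parameters of Section 4.5 and Section 5.1 is fixed, each individual verification reduces to a short diagonal-coordinate computation, and the conceptual content --- a self-duality check on each entry --- is straightforward.
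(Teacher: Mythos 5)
You have the right overall strategy --- identify the automorphism of $\ms{M}$ induced by the element of $W(\Theta,\rho)$, transfer it to the Deligne--Lusztig parameter, and check conjugacy under $W(\ms{M}^*)^{F^*}$ --- and your treatment of part (ii) is essentially the paper's argument. However, there is a concrete gap in part (i).

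You claim that the self-duality $\eta\cdot\eta^{\mr{Frob}^d}\cong\lambda\vert_{W_{k_{2d}}}$ ``forces $\tau^{-1}$ to lie in the $\langle F^*\rangle$-orbit of $\tau$.'' This is false: the relation gives $\tau\cdot\tau^{q^d}=c$ for the scalar $c$ coming from the similitude character $\lambda$, hence $\tau^{-1}=c^{-1}\tau^{q^d}$, which for a nontrivial similitude is \emph{not} in the Frobenius orbit of $\tau$. What saves the argument --- and what you have not accounted for --- is that the automorphism $v[\alpha_i,\Theta]$ does not simply invert the $GL_{2m}$-factor; the explicit computation on the torus (via the $e_0^*$-coordinate) shows it acts on $\ms{M}$ by $(A,B)\mapsto(w_0\,\leftexp{t}A^{-1}w_0^{-1},\,(\det A)\,B)$. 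The twist of the $GSpin$-factor by $\det A$ means the comparison is not $R_\sigma^\vee\overset{?}{\cong}R_\sigma$ but rather $R_\sigma^\vee(\omega_{R_\pi}\circ\det)\overset{?}{\cong}R_\sigma$, and on the parameter side this multiplies $s^{-1}$ by the central element $c$. Only then does $s^{-1}c$ become a Frobenius/Weyl shift of $s$ (e.g.\ for $m=2$, $s^{-1}c=\mr{diag}(\tau^{q^2},\tau^{q^3},\tau,\tau^q)$, a cyclic shift of $s$, conjugate via an element commuting with the Coxeter element). So the ``self-duality input'' you invoke only works because of the $\det A$ twist, and your formulation, as stated, would fail at exactly this step. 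You should derive the precise cocharacter action of $v[\alpha_i,\Theta]$ (from which the $\det A$ twist on the $GSpin$-factor is visible) before carrying out the diagonal-coordinate computation on $s$, and compare $R_\sigma^\vee(\omega_{R_\pi}\circ\det)$ with $R_\sigma$ rather than $R_\sigma^\vee$ with $R_\sigma$.
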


\begin{proof} (i) For $i=0^+,2m^+$ if $\Theta=\Theta^+$, $i=2^-,4^-$ if $\Theta=\Theta^-,$ $i=1^\dag,5^\dag$ if $\Theta=\Theta^\dag$, and $i=0^\ddag,2^\ddag$ if $\Theta=\Theta^\ddag,$ $v[\alpha_i,\Theta]$ acts on the root datum for $\ms{M}$, $\Psi=(X,\Phi,X^\vee,\Phi^\vee),$ preserving the set of positive roots $\Phi^+.$ So $v[\alpha_i,\Theta]$ gives an automorphism of the based root datum for $\ms{M}$ \bdm \Psi_0=(X,\Phi^+,X^\vee,(\Phi^+)^\vee).\edm 
Let $\ms{B}$ be the Borel subgroup of $\ms{M}$ given by the positive roots $\Phi^+$, and for each $a\in\Delta$ let $u_a\ne e$ be a fixed element in the root subgroup $U_a$.
By [Sp, Prop 2.13] Aut $\Psi_0(\ms{M})$ is isomorphic to the group \bdm \mr{Aut}(\ms{M},\ms{B},\ms{T},\{u_a\}_{a\in\Delta})\edm of automorphisms of $\ms{M}$ which stabilize $\ms{B}$, $\ms{T}$ and the set of $u_a.$ Therefore to determine the automorphism of $\ms{M}$ given by $v[\alpha_i,\Theta]$, we need only find an automorphism of $\ms{M}$ that stabilizes the set of $u_a$ and gives the same action on $\ms{T}$ as $v[\alpha_i,\Theta].$ 

The action of $v[\alpha_i,\Theta^+],\,i=0^+,2m^+,$ on the maximal torus $T$ of $G=GSpin_{4m+5}(k)$ is given by
\bdm v[\alpha_i,\Theta^+]\cdot (\prod_{j=0}^{2m+2} e_j^*(\lambda_j))\edm \bdm=e_0^*(\lambda_0\lambda_1\dots\lambda_{2m})e_1^*(\lambda_{2m}^{-1})e_2^*(\lambda_{2m-1}^{-1})\dots e_{2m}^*(\lambda_1^{-1})e_{2m+1}^*(\lambda_{2m+1})\dots e_{2m+2}^*(\lambda_{2m+2}),\edm
for $\lambda_j\in GL_1(k).$ This factors through an action on $\ms{T}\subset\ms{M}.$
If $A\in GL_{2m}(\mf{f}), \,B\in GSpin_5(\mf{f})$ the action of $v[\alpha_i,\Theta^+]$ on $\ms{M}$ is given by \bdm (A,B)\mapsto (w_0(\leftexp{t}A^{-1})w_0^{-1},(\mr{det}A)B),\edm
where $w_0\in GL_{2m}$ is the matrix with 1 on the antidiagonal and 0 elsewhere.


For $i=2^-,4^-,$ $v[\alpha_i,\Theta^-]$ acts on the maximal torus $T$ of $G=GSpin_9(k)$ by
\bdm v[\alpha_i,\Theta^-]\cdot (\prod_{j=0}^4 e_j^*(\lambda_j))=e_0^*(\lambda_0\lambda_1\lambda_2)e_1^*(\lambda_2^{-1})e_2^*(\lambda_1^{-1})e_3^*(\lambda_3) e_4^*(\lambda_4).\edm
Then, if $A\in GL_{2}(\mf{f}), \,B\in GSpin_4(\mf{f})$ the action of $v[\alpha_i,\Theta^-]$ on $\ms{M}$ is given by
\bdm (A,B)\mapsto (w_0(\leftexp{t}A^{-1})w_0^{-1},(\mr{det}A)B).\edm

For $i=1^\dag,5^\dag,$ $v[\alpha_i,\Theta^\dag]$ acts on the maximal $k$-split torus $S$ of $G=GSpin_{8,5}(k)$ by
\bdm v[\alpha_i,\Theta^\dag]\cdot (\prod_{j=0}^5 e_j^*(\lambda_j))=e_0^*(\lambda_0\lambda_1\lambda_2\lambda_3\lambda_4)e_1^*(\lambda_4^{-1})e_2^*(\lambda_3^{-1})e_3^*(\lambda_2^{-1}) e_4^*(\lambda_1^{-1})e_5^*(\lambda_5).\edm 
This factors through an action on $\ms{T}\subset GL_4(\mf{f}),$ and
by [Sp, Prop 2.13], $v[\alpha_i,\Theta]$ acts on $A\in GL_4(\mf{f})$ as 
\bdm A\mapsto w_0(\leftexp{t}A^{-1})w_0^{-1}.\edm 
If $A\in GL_4(\mf{f}), \,B\in \leftexp{2}{GSpin}_4(\mf{f})$ the action of $v[\alpha_i,\Theta^\dag]$ on $\ms{M}$ is given by \bdm (A,B)\mapsto (w_0(\leftexp{t}A^{-1})w_0^{-1},(\mr{det}A)B).\edm

For $i=0^\ddag,2^\ddag,$ $v[\alpha_i,\Theta^\ddag]$ acts on the maximal $k$-split torus $S$ of $G=GSpin_{6,3}(k)$ by
\bdm v[\alpha_i,\Theta^\ddag]\cdot (\prod_{j=0}^3 e_j^*(\lambda_j))=e_0^*(\lambda_0\lambda_1\lambda_2)e_1^*(\lambda_2^{-1})e_2^*(\lambda_1^{-1})e_3^*(\lambda_3).\edm
As in the previous case we find, 
if $A\in GL_{2}(\mf{f}), \,B\in [(\leftexp{2}{GSpin}_2\times GSpin_3)/\Delta GL_1](\mf{f})$ the action of $v[\alpha_i,\Theta^\ddag]$ on $\ms{M}$ is given by \bdm (A,B)\mapsto (w_0(\leftexp{t}A^{-1})w_0^{-1},(\mr{det}A)B).\edm

By [Bu, 4.1.1],  if $R'(A)=R_\sigma(\leftexp{t}{A}^{-1})$ then $R'\cong R_\sigma^\vee.$
For $i=0,2m$ if $\Theta=\Theta^+$, $i=2^-,4^-$ if $\Theta=\Theta^-,$ $i=1^\dag,5^\dag$ if $\Theta=\Theta^\dag$, and $i=0^\ddag,2^\ddag$ if $\Theta=\Theta^\ddag,$
\bdm \rho(v[\alpha_i,\Theta]\cdot(A,B))=R_\sigma(\leftexp{t}A^{-1})\omega_{R_\pi}(\mr{det}A)R_\pi(B)=R_\sigma^\vee(A)\omega_{R_\pi}(\mr{det}A)R_\pi(B).\edm
For $m=1$, as $R_\pi$ and  $R_\sigma$ are constructed using data from the same TRD parameter $\phi=\phi_1\oplus\phi_2,$ we have $\omega_{R_\pi}=\omega_{R_\sigma}.$ As $R_\sigma$ is a representation of $GL_2(\mf{f})$, by [Bu, 4.1.1], 
\bdm R_\sigma^\vee(\omega_{R_\sigma}\circ\mr{det})\cong R_\sigma.\edm

For $m=2$, both $R_\pi$ and $R_\sigma$ are constructed using data from the irreducible TRD parameter $\phi.$ 
For \bdm\phi(\mc{I}_t)=\langle s\rangle\subset GSp_4(\mb{C})\hookrightarrow GL_4(\mb{C}),\edm 
we have \bdm s=\mr{diag}(\tau,\tau^q,\tau^{q^2},\tau^{q^3}),\quad  \tau\tau^{q^2}=\tau^q\tau^{q^3}=:c.\edm
As in Section 4.5, for $R_\pi=R_{\ms{T}_\lambda,\chi_\lambda}$, the character $\chi_\lambda$ of $\ms{T}_\lambda^{F_\lambda}\subset GSpin_5(\mf{f})$ or $\leftexp{2}{GSpin}_4(\mf{f})$ is represented by $s.$ Similarly, for $R_\pi=R_{\ms{T}_\lambda,\chi_\lambda},$ the character $\chi_\lambda$ of $\ms{T}_\lambda^{F_\lambda}\subset GL_4(\mf{f})$ is represented by $s.$ We have
the character $\omega_\pi\circ\mr{det}$ of $GL_4(\mf{f})$ restricted to $\ms{T}_\lambda^{F_\lambda}$ is represented by the element \bdm\mr{diag}(c,c,c,c).\edm
Then, as $R_\pi^\vee=R_{\ms{T}_\lambda,\chi_\lambda^{-1}}$, by [Ca, 7.2.8] the representation
$R_\sigma^\vee\otimes (\omega_\pi\circ \mr{det})$ of $GL_4(\mf{f})$ is represented by the element \bdm s^{-1}c.\edm
We have \bdm s_2s_3s_1s_2\cdot s^{-1}c=s\edm for $s_2s_3s_1s_2\in N_{GL_4(\mb{C})}(\hat{T})/\hat{T}$ where $s_2s_3s_1s_2$ commutes with the Coxeter element $\hat{w}=s_1s_2s_3\in N_{GL_4(\mb{C})}(\hat{T})/\hat{T}$.
Then by [Ca, 7.3.4], $R_\sigma^\vee(\omega_{R_\pi}\circ\mr{det})\cong R_\sigma.$

Therefore, for $i=0,2m$ if $\Theta=\Theta^+$, $i=2^-,4^-$ if $\Theta=\Theta^-,$ $i=1^\dag,5^\dag$ if $\Theta=\Theta^\dag$, and $i=0^\ddag,2^\ddag$ if $\Theta=\Theta^\ddag,$ 
\bdm v[\alpha_i,\Theta]\cdot \rho\cong\rho.\edm

(ii) We will now show that the action of $\nu$ on $\ms{M}=GSpin_4(\mf{f})\times GL_2(\mf{f})$ does not preserve $\rho=R_\pi^-\boxtimes R_\sigma^-.$ 
We have $\nu$ acts on the simple roots in $\Theta^-$ by, \bdm \nu\cdot \alpha_{0^-}=\alpha_{1^-}, \nu\cdot\alpha_{1^-}=\alpha_{0^-}, \nu\cdot\alpha_{2^-}=\alpha_{2^-}, \nu\cdot\alpha_{3^-}=\alpha_{3^-}, \nu\cdot\alpha_{4^-}=\alpha_{4^-}.\edm
Note that $\nu$ fixes $R_\sigma^-$, but it suffices to show $\nu\cdot R_\pi^-\ncong R_\pi^-,$ which we now do. 

Up to conjugation, there is only one minisotropic torus in $GSpin_4(\mf{f}).$ So there exists $g\in GSpin_4(\mf{f})$ such that $\nu\cdot \ms{T}_\lambda=g\ms{T}_\lambda g^{-1}.$
Then by [D] there exists a lift $\dot{g}$ of $g$ to $\mc{P}$ such that $\nu\cdot T_\lambda=\dot{g} T_\lambda\dot{g}^{-1}.$
By replacing $\nu$ by $\mr{Ad}(\dot{g}^{-1})\circ\nu$ we can assume $\nu$ fixes $T_\lambda.$ Therefore 
\bdm \nu\in N_{GSpin_5(K)}(T_\lambda)/T_\lambda\edm
gives a non-trivial action on $T_\lambda.$ In fact, since $\nu$ induces a non-trivial diagram automorphism for the diagram of $GSpin_4$, 
\bdm \nu \in N_{GSpin_5(K)}(T_\lambda)/T_\lambda\setminus N_{GSpin_4(K)}(T_\lambda)/T_\lambda.\edm
Then, since for $R_\pi^-=R_{\ms{T}_\lambda,\chi_\lambda}$ the character $\chi_\lambda$ is in general position,
by [Ca, 7.3.4]
\bdm \nu\cdot\rho\ncong\rho.\edm 
\end{proof}

Define \bdm W(\Theta,\rho)=\{w\in S_\Theta\vert w\rho\simeq\rho\}.\edm
The elements of $W(\Theta,\rho)$ parametrize a basis for $\mc{H}(G,\rho)$, with relations given by Theorem (7.12) of [Mo2].

\begin{corollary} \bdm \mc{H}(G,\rho)=\langle T_a,T_b,T_c\rangle,\edm $\{a,b\}=\{0^+,2m^+\},\{2^-,4^-\},\{1^\dag,5^\dag\}$ or  $\{0^\ddag,2^\ddag\}$, subject for some $p_i$ to the relations
\begin{itemize}
\item[\textup{(}i\textup{)}] $T_c\, T_i=T_i\, T_c,$
\item[\textup{(}ii\textup{)}] $T_{i}^{2}=(p_i-1)T_i+p_i,$ 
\end{itemize}
where $i\in\{a,b\}.$
\end{corollary}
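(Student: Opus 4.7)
The plan is to read off the presentation of $\mc{H}(G,\rho)$ directly from Lemmas 6.1 and 6.2 combined with Morris's description [Mo2, (7.12)] of the Hecke algebra of a parahoric type. Since by [Mo2, \S 4] the algebra $\mc{H}(G,\rho)$ has a basis indexed by the subgroup
\[W(\Theta,\rho)=\{w\in S_\Theta \mid w\rho\simeq\rho\},\]
the first task is to identify $W(\Theta,\rho)$ in each of the four cases $\Theta=\Theta^+,\Theta^-,\Theta^\dag,\Theta^\ddag$.

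For $\Theta\in\{\Theta^+,\Theta^\dag,\Theta^\ddag\}$, Lemma 6.1 tells us that $S_\Theta$ is generated by $v[\alpha_a,\Theta],\,v[\alpha_b,\Theta]$ and $T(e_0^*)$. Lemma 6.2(i) shows that each $v[\alpha_i,\Theta]$ fixes $\rho$ up to isomorphism, and $T(e_0^*)$ acts trivially on $\ms{M}$ since it is translation purely in the central $GL_1$-direction (it lies in the kernel of $T(K)\to T_{ad}(K)$), so $T(e_0^*)\cdot\rho\cong\rho$ tautologically. Hence $W(\Theta,\rho)=S_\Theta$ in these three cases. For $\Theta=\Theta^-$, Lemma 6.1(ii) gives $S_{\Theta^-}=\langle v[\alpha_{2^-},\Theta^-],\,v[\alpha_{4^-},\Theta^-],\,\nu\rangle$, but Lemma 6.2(ii) rules out $\nu$; on the other hand $\nu^2=T(e_0^*)$ does preserve $\rho$ for the same central-direction reason, so
\[W(\Theta^-,\rho)=\langle v[\alpha_{2^-},\Theta^-],\,v[\alpha_{4^-},\Theta^-],\,T(e_0^*)\rangle,\]
which has the same shape as the other three cases. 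In every case we therefore label the basis elements $T_a,T_b,T_c$ corresponding to $v[\alpha_a,\Theta],v[\alpha_b,\Theta],T(e_0^*)$ respectively.

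Next I would invoke [Mo2, Thm (7.12)] to convert the group-theoretic data $W(\Theta,\rho)$ into the algebra presentation. By that theorem, each generator $v[\alpha_i,\Theta]$ (for $i=a,b$) is an involution in $W(\Theta,\rho)$, and hence its corresponding Hecke-algebra element $T_i$ satisfies a quadratic relation of the form $T_i^2=(p_i-1)T_i+p_i$ for a parameter $p_i$ determined by the reductive quotient of the rank-one parahoric attached to $\Theta\cup\{\alpha_i\}$ (these are the $p_i$ that will be computed later in the paper). The commutation of $T_c$ with $T_a$ and $T_b$ is forced because $T(e_0^*)$, being central-direction translation, lies in the centre of $W$, so in particular it commutes with both $v[\alpha_a,\Theta]$ and $v[\alpha_b,\Theta]$ in $S_\Theta$; applying [Mo2, (7.12)] again, this group-theoretic commutation lifts to $T_c T_i=T_i T_c$ in $\mc{H}(G,\rho)$.

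Finally I would note that since $W(\Theta,\rho)$ is generated by these three elements and the quadratic relations together with the commutation suffice to rewrite any word in $T_a,T_b,T_c$ in a normal form (the subgroup generated by $v[\alpha_a,\Theta]$ and $v[\alpha_b,\Theta]$ has the presentation of a type $\tilde{A}_1$ Coxeter-like system within $W$, tensored with the infinite cyclic centre $\langle T(e_0^*)\rangle$), one obtains the asserted presentation $\mc{H}(G,\rho)=\langle T_a,T_b,T_c\rangle$. I expect the only mildly subtle step to be the verification that no additional braid relation is needed between $T_a$ and $T_b$, which will follow because, as noted in the introduction, the abstract shape of $\mc{H}(G,\rho)$ is a Hecke algebra of type $\tilde{A}_1$ tensored with the polynomial algebra $\mb{C}[T_c^{\pm1}]$, i.e.\ no finite braid relation between $T_a$ and $T_b$ exists. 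The computation of the parameters $p_a,p_b$ themselves is deferred to the next subsections and is not part of this corollary.
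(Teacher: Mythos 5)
Your argument mirrors the paper's: it identifies $W(\Theta,\rho)$ from Lemmas 6.1 and 6.2, then invokes Morris [Mo2, Thm 7.12] to obtain the generators and quadratic relations, and derives the commutation $T_cT_i=T_iT_c$ from the fact that $T(e_0^*)$ is central in $W$. The only point the paper adds that you omit is a check that the $2$-cocycle $\mu$ appearing in Morris's presentation is trivial in all cases (in particular on $\langle T(e_0^*)\rangle$, since $T(e_0^*)$ is a translation); this is a hypothesis of the cited theorem that you could not reasonably have anticipated without its statement, and the rest of your reasoning, including the careful treatment of the $\Theta^-$ case where $\nu$ is excluded but $\nu^2=T(e_0^*)$ survives, is exactly the paper's.
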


\begin{proof} We have shown \bdm W(\Theta,\rho)=\langle v[\alpha_{a},\Theta],v[\alpha_{b},\Theta], T(e_0^*) \rangle,\edm for 
$\{ a,b\}=\{0^+,2m^+\},\{2^-,4^-\},\{1^\dag,5^\dag\}$ or  $\{0^\ddag,2^\ddag\}.$
By [Mo2, Thm 7.12], $\mc{H}(G,\rho)$ is generated by three elements $T_a,T_b,T_c$ subject to the given relations. Note that the cocycle $\mu$ is trivial in all cases, and in particular $\mu$ is trivial when restricted to $\langle T(e_0^*) \rangle$ as $T(e_0^*)$ is a translation. Also, $T_c\, T_i=T_i \,T_c$ for all $i$ as $T(e_0^*)v[\alpha_i,\Theta]=v[\alpha_i,\Theta]T(e_0^*)$ (in $W$) for all $i$. This is as $wT(e_0^*)w^{-1}(x)=T(w(e_0^*))(x)=T(e_0^*)(x)$ for $x\in\mc{A}, w\in W_o,$ so all simple reflections commute with $T(e_0^*)$ in $W.$
\end{proof}

\subsection{A theorem of Lusztig}

In this section, we will compute the parameters $p_i$ in Corollary 6.3.
The support of $T_i$ is $\mc{P}\dot{w}_i\mc{P}$ where $\dot{w}_i\in N_G(T)$ projects to 
\begin{align*} & v[\alpha_i,\Theta^+]\in S_{\Theta^+}, \, i=0,2m, \, \mr{if} \,\, \mc{P}=\mc{P}^{+}, \quad
v[\alpha_i,\Theta^-]\in S_{\Theta^-}, \,  i=2^-,4^-, \, \mr{if} \,\, \mc{P}=\mc{P}^-, \\
& v[\alpha_i,\Theta^\dag]\in S_{\Theta^\dag}, \, i=1^\dag,5^\dag, \,\mr{if} \,\, \mc{P}=\mc{P}^{\dag},\quad\,\,\,\,\, v[\alpha_i,\Theta^\ddag]\in S_{\Theta^\ddag}, \, i=0^\ddag,2^\ddag, \, \mr{if} \,\, \mc{P}=\mc{P}^{\ddag}.\end{align*} 
The elements $\dot{w}_i$ lie in $\mc{K}_i.$ So $T_{i}$ is supported in $\mc{K}_{i},$ where $i$ is given as above.
Let $\ms{G}_i$ be the quotient of $\mc{K}_i$ by its pro-unipotent radical.
Let $\ms{P}_i=\ms{M}_i\ms{N}_i$ be the image of $\mc{P}$  in $\ms{G}_i$. 
Recall that $\ms{M}_i\cong \ms{G}_\lambda^{F_\lambda}\times GL_{2m}(\mf{f}),$ so that $\rho$ is a representation of $\ms{M}_i.$
Consider $V_i=\mr{Ind}_{\ms{P}_i}^{\ms{G}_i}(\rho).$
The algebra of right 
$\ms{G}_i$-endomorphisms of $V_i$,
\bdm\mr{End}_{\ms{G}_i}(V_i)=\mc{H}(\ms{G}_i,\rho),\edm
is isomorphic to the Hecke algebra
\bdm \mc{H}(\ms{G}_i,\rho)=\{f:\ms{G}_i\longrightarrow \mr{End}_{\mb{C}}(W^\vee)\vert f(p_1gp_2)=\check{\rho}(p_1)f(g)\check{\rho}(p_2),\,\,p_1,p_2\in\ms{P}_i,g\in\ms{G}_i\},\edm
where $(\check{\rho},W^\vee)$ is the contragredient representation of $(\rho,W).$
The finite Hecke algebra $\mc{H}(\ms{G}_i,\rho)$ can be canonically identified with a subalgebra of $\mc{H}(G,\rho)$. This is because $\mr{Ind}_{\mc{P}}^{\mc{K}_i}(\rho)\cong \mr{infl}_{\ms{G}_i}^{\mc{K}_i}(\mr{Ind}_{\ms{P}_i}^{\ms{G}_i}(\rho))$
and $\mc{H}(\mc{K}_i,\rho)$ can be canonically identified with a subalgebra of $\mc{H}(G,\rho)$.
By [Mo2, 6.5] $\mc{H}(\ms{G}_i,\rho)$ is two dimensional. It is generated by a function $T_e$ supported on  $\ms{P}_i$ and a function $T_i$ supported on $\ms{P}_iw_i\ms{P}_i$. Here, $w_i$ is the image of $\dot{w}_i$ in $\ms{G}_i.$ We have $w_i\in N_{\ms{G}_i}(\ms{M}_i)\cap N_{\ms{G}_i}(\ms{T}),$ and $w_i\cdot\rho\cong\rho.$
The function $T_i$ satisfies
\[T_i^2=(p_i-1)T_i+p_i.\]
Since the endomorphism algebra of $V_i$ is two dimensional it has two irreducible summands:
\bdm\mr{Ind}_{\ms{P}_i}^{\ms{G}_i}(\rho)=\rho_1\oplus\rho_2.\edm
By [HL], the parameter $p_i$ is the quotient of the degrees of the two irreducible summands.

We will use a theorem of Lusztig, [Lu, Thm 8.6], to compute the parameters $p_i.$ 


\subsection{Identification of $\ms{G}_i(\bar{\mf{f}})$}

We now describe $\ms{G}_i(\bar{\mf{f}})$ in the cases of concern.
With the identification of the character group of $T$ with the character group of $\ms{T}$, we have the character and cocharacter lattices for $\ms{T}$ are those for $T$,
\bdm X=\mb{Z}e_0\oplus\mb{Z}e_1\oplus\dots\oplus\mb{Z}e_n, \quad
X^\vee=\mb{Z}e_0^*\oplus\mb{Z}e_1^*\oplus\dots\oplus\mb{Z}e_n^*,\edm where $n=2m+2.$
When the simple roots for $T$ are given by $\Delta^+$, the fundamental chamber $C^+$ in the apartment $\mc{A}=X^\vee\otimes\mb{R}$ for $G=GSpin_{4m+5}$ is defined by the inequalities
\bdm 1-e_2>e_1>e_2>\dots>e_n>0.\edm
When the simple roots for $T$ are given by $\Delta^-$, the fundamental chamber $C^-$ in the apartment $\mc{A}$ for $G=GSpin_{9}$ is defined by the inequalities
\bdm 1-e_4>e_3>e_4>e_1>e_2>0.\edm
When the simple roots for $T$ are given by \bdm \Delta^\dag=\{\beta_{1^\dag}=e_6-e_5,\beta_{2^\dag}=e_5-e_1, \beta_{3^\dag}=e_1-e_2, \beta_{4^\dag}=e_2-e_3, \beta_{5^\dag}=e_3-e_4, \beta_{6^\dag}=e_4\}, \edm 
the fundamental chamber $C^\dag$ in the apartment $\mc{A}$ for $G=GSpin_{13}$ is defined by the inequalities
\bdm 1-e_5>e_6>e_5>e_1>e_2>e_3>e_4>0. \edm
When the simple roots for $T$ are given by \bdm \Delta^\ddag=\{\beta_{1^\ddag}=e_4-e_1,\beta_{2^\ddag}=e_1-e_2, \beta_{3^\ddag}=e_2-e_3, \beta_{4^\ddag}=e_3\}, \edm 
the fundamental chamber $C^\ddag$ in the apartment $\mc{A}$ for $G=GSpin_9$ is defined by the inequalities
\bdm 1-e_1>e_4>e_1>e_2>e_3>0. \edm
Let $x_i$ be the vertex of the fundamental chamber $C$ corresponding to the root $\alpha_i,$ (or $\beta_i$) in the local Dynkin diagram $\Pi$ for $G$.
The positive roots of $\ms{G}_i$ are
\bdm\Phi^{+}_{x_i}=\{a \in\Phi^{+}:\langle a, x_i \rangle\in \mb{Z}\},\edm where $\Phi^+$ is the set of positive roots for $T$ given by the simple roots above in the various cases.
The coroot associated with a root $a\in\Phi^+_{x_i}$ is the same for $\ms{G}_i$ as for $G$.
We will also list $\ms{M}_i$ and the dual groups $\ms{M}_i^*, \ms{G}_i^*$ in each case.

When $\mc{P}=\mc{P}^+$, the reductive component of reduction mod $\mf{p}$ of $\mc{P}$ is
\bdm \ms{M}_{0^+}=\ms{M}_{2m^+}=GL_{2m}\times GSpin_5, \quad\quad
 \ms{M}_{0^+}^*=\ms{M}_{2m^+}^*=GL_{2m}\times GSp_4. \edm
The root $\alpha_{0^+}$ corresponds to the vertex $x_0=0$, so the
set of positive roots for $\ms{G}_{0^+}$ is
\bdm\Phi^+_{x_0}=\{e_i\pm e_j\,\,\vert\,1\le i<j\le n\}\cup\{e_i\,\,\vert\, 1\le i\le n\}.\edm
 Then \bdm \ms{G}_{0^+}=GSpin_{4m+5},\quad \quad\ms{G}_{0^+}^*=GSp_{4m+4}.\edm
The root $\alpha_{2m^+}$ corresponds to the vertex $x_{2m}=1/2(e_1^*+e_2^*+\dots+e_{2m}^*)$, so
\bdm \Phi^+_{x_{2m}}=\{e_i\pm e_j \vert 1\leq i< j\leq 2m\}\cup\{e_{2m+1}-e_{2m+2}, e_{2m+2}, e_{2m+1}, e_{2m+1}+e_{2m+2}\}.\edm
Therefore, \bdm \ms{G}_{2m^+}=(GSpin_{4m}\times GSpin_5)/\Delta GL_1,\quad \ms{G}_{2m^+}^*=(GSO_{4m}\times GSp_4)^\circ.\edm

When $\mc{P}=\mc{P}^-$,
\bdm \ms{M}_{2^-}=\ms{M}_{4^-}=GL_2\times GSpin_4, \quad\quad \ms{M}_{2^-}^*=\ms{M}_{4^-}^*=GL_2\times GSO_4. \edm
The root $\alpha_{2^-}$ corresponds to the vertex $x_{2^-}=1/2(e_3^*+e_4^*),$ so the set of positive roots for $\ms{G}_{2^-}$ is
\bdm \Phi^+_{x_{2^-}}=\{e_1-e_2, e_2, e_1, e_1+e_2\}\cup\{e_3-e_4, e_3+e_4\}.\edm
Then we have \bdm \ms{G}_{2^-}=(GSpin_5\times GSpin_4)/\Delta GL_1, \quad\quad
 \ms{G}_{2^-}^*=(GSp_4\times GSO_4)^\circ. \edm
The root $\alpha_{4^-}$ corresponds to the vertex $x_{4^-}=1/2(e_1^*+e_2^*+e_3^*+e_4^*),$ so
\bdm \Phi^+_{x_{4^-}}=\{ e_3\pm e_4, e_4\pm e_1, e_1\pm e_2\}.\edm
Therefore
\bdm \ms{G}_{4^-}=GSpin_8,\quad\quad
 \ms{G}_{4^-}^*=GSO_8.\edm
 
When $\mc{P}=\mc{P}^\dag$,
\bdm \ms{M}_{1^\dag}(\bar{\mf{f}})=\ms{M}_{5^\dag}(\bar{\mf{f}})=GL_4\times GSpin_4, \quad\quad \ms{M}_{1^\dag}^*(\bar{\mf{f}})=\ms{M}_{5^\dag}^*(\bar{\mf{f}})=GL_4\times GSO_4. \edm
The root $\beta_{2^\dag}$ corresponds to the vertex $x_{2^\dag}=1/2(e_5^*+e_6^*),$ so the set of positive roots for $\ms{G}_{1^\dag}(\bar{\mf{f}})$ is
\bdm \Phi^+_{x_{2^\dag}}=\{e_i\pm e_j \vert \,\,1\le i<j\le 4\}\cup\{e_1, e_2, e_3, e_4\}\cup\{e_6\pm e_5\}.\edm
Then we have \bdm \ms{G}_{1^\dag}(\bar{\mf{f}})=(GSpin_9\times GSpin_4)/\Delta GL_1, \quad\quad
 \ms{G}_{1^\dag}^*(\bar{\mf{f}})=(GSp_8\times GSO_4)^\circ. \edm
The root $\beta_{6^\dag}$ corresponds to the vertex $x_{6^\dag}=1/2(e_1^*+e_2^*+e_3^*+e_4^*+e_5^*+e_6^*).$
We have
\bdm \ms{G}_{5^\dag}(\bar{\mf{f}})=GSpin_{12},\quad\quad
 \ms{G}_{5^\dag}^*(\bar{\mf{f}})=GSO_{12}.\edm

When $\mc{P}=\mc{P}^\ddag$,
\bdm \ms{M}_{0^\ddag}(\bar{\mf{f}})=\ms{M}_{2^\ddag}(\bar{\mf{f}})=GL_2\times (GSpin_2\times GSpin_3)/\Delta GL_1,\edm 
\bdm \ms{M}_{0^\ddag}^*(\bar{\mf{f}})=\ms{M}_{2^\ddag}^*(\bar{\mf{f}})=GL_2\times (GSO_2\times GSp_2)^\circ. \edm
The affine roots $1-e_4-e_1$ and $\beta_{1^\ddag}=e_4-e_1$ corresponds to the edge of $C^\ddag$ containing the vertices $0$ and $1/2(e_4^*),$ so the set of positive roots for $\ms{G}_{0^\ddag}(\bar{\mf{f}})$ is
\bdm\{e_1\pm e_2,e_2\pm e_3, e_1\pm e_3, e_1, e_2, e_3\}.\edm
Then we have \bdm \ms{G}_{0^\ddag}(\bar{\mf{f}})=(GSpin_7\times GSpin_2)/\Delta GL_1, \quad\quad
 \ms{G}_{0^\ddag}^*(\bar{\mf{f}})=(GSp_6\times GSO_2)^\circ. \edm
The root $\beta_{3^\ddag}$ corresponds to the vertex $x_{3^\ddag}=1/2(e_4^*+e_1^*+e_2^*),$ so
\bdm \Phi^+_{x_{3^\ddag}}=\{ e_4\pm e_1, e_1\pm e_2, e_4\pm e_2\}\cup\{e_3\}.\edm
Therefore
\bdm \ms{G}_{2^\ddag}(\bar{\mf{f}})=(GSpin_3\times GSpin_6)/\Delta GL_1,\quad\quad
 \ms{G}_{2^\ddag}^*(\bar{\mf{f}})=(GSp_2\times GSO_6)^\circ.\edm

To compute the parameters $p_i$, [Lu, 8.6] applies only when $\ms{G}_i$ has a connected center. To use this theorem in the cases where $\ms{G}_i$ does not have connected center, we will map $\ms{G}_i\hookrightarrow\ms{G}'_i$ where $\ms{G}'_i$ has connected center in all cases.
Let $\ms{M}'_i$ be the Levi component of the parabolic subgroup $\ms{P}'_i\subset\ms{G}'_i$ such that $\ms{M}_i\subset\ms{M}'_i$ and $\ms{P}_i\subset\ms{P}'_i$. 
We list the groups $\ms{M}'_i$ in the various cases in the following table. The groups $GSpin_{2n}^\sim$ are defined in the appendix, where their root datum are also given.

\renewcommand{\arraystretch}{1.6}

\begin{center}\begin{tabular}{| c | c | c |}
\hline
$i$ & $\ms{G}'_i(\bar{\mf{f}})$ & $\ms{M}'_i(\bar{\mf{f}})$ \\
\hline
$0^+$ & $GSpin_{4m+5}$ & $GL_{2m}\times GSpin_5$ \\
\hline
$2m^+$ & $(GSpin_{4m}^\sim \times GSpin_5)/\Delta GL_1$ & $GL_1\times GL_{2m}\times GSpin_5$ \\
\hline
$2^-$ & $(GSpin_5\times GSpin_4^\sim)/\Delta GL_1$ & $GL_2\times GSpin^\sim_4$ \\
\hline
$4^-$ & $GSpin_8^\sim$ & $GL_2\times GSpin^\sim_4$ \\
\hline
$1^\dag$ & $(GSpin_9\times GSpin_4^\sim)/\Delta GL_1$ & $GL_4\times GSpin^\sim_4$ \\
\hline
$5^\dag$ & $GSpin_{12}^\sim$ & $GL_4\times GSpin^\sim_4$ \\
\hline 
$0^\ddag$ & $(GSpin_7\times GSpin_2)/\Delta GL_1$ 
 & $GL_2\times (GSpin_2\times GSpin_3)/\Delta GL_1$   \\
\hline
$2^\ddag$ & $(GSpin_3\times GSpin_6^\sim)/\Delta GL_1$  
& $GL_1\times GL_2\times (GSpin_2\times GSpin_3)/\Delta
 GL_1$  \\
 \hline
\end{tabular}\end{center}


\subsection{Calculation of parameters $p_i$}



Let \bdm\ms{T}_i=\ms{T}_{GL_{2m}}\times \ms{T}_{\ms{G}_\lambda}\subset \ms{M}_i,\edm where $\ms{T}_{GL_{2m}}$ is the split maximal torus in $GL_{2m}(\bar{\mf{f}})$ and $\ms{T}_{\ms{G}_\lambda}$ is the split maximal torus in $\ms{G}_\lambda(\bar{\mf{f}}).$
Recall from Section 4 that there exists an element $\overline{p_\lambda}\in \ms{G}_\lambda(\bar{\mf{f}})$ such that 
$\ms{T}_\lambda^{F_\lambda}=\mr{Ad}(\overline{p_\lambda})\ms{T}^{F_w},$ where $w$ is defined in Section 4.5.
Also, from Section 5.1, we have an element $p_{GL_{2m}}\in GL_{2m}(\bar{\mf{f}})$ such that  $\ms{S}^{F}=\mr{Ad}(p_{GL_{2m}})\ms{T}^{F_w},$ where in this case $w$ is a Coxeter element.
Denote by \bdm\ms{S}_i^{F_\lambda}=\mr{Ad}(p_{GL_{2m}},\overline{p_{\lambda(\ms{G}_\lambda)}})\ms{T}_i^{F_w}\edm where
\bdm F_\lambda=F_{(GL_{2m})}\otimes F_{\lambda(\ms{G}_\lambda)},\quad F_w=F_{w(GL_{2m})}\otimes F_{w(\ms{G}_\lambda)}.\edm We will also denote by  $F_\lambda, F_w$ the twisted Frobenius which acts on $\ms{G}'_i$ such that the restriction of to $\ms{G}_i$ is $F_\lambda, F_w.$ Also, $F^*_\lambda, F^*_w$ will denote the dual Frobenius which acts on $\ms{G}'^*_i.$

The character $\chi_\lambda=\eta_{(GL_{2m})}\otimes\chi_{\lambda(\ms{G}_\lambda)}$ of the $F_\lambda=F_{(GL_{2m})}\otimes F_{\lambda(\ms{G}_\lambda)}$
stable torus $\ms{S}_i$ can be viewed as a regular element in a dual $F_\lambda^*$ stable torus $\ms{S}_i^*$ of the dual group $\ms{G}_i^*$.
Let $t$ correspond to $\chi_\lambda=\eta_{(GL_{2m})}\otimes\chi_{\lambda(\ms{G}_\lambda)}.$ There is a surjective map of dual groups 
\bdm\psi:\ms{G}'^*_i\longrightarrow \ms{G}_i^*.\edm
Let $t'$ be an element of $\ms{G}'^*$ such that $\psi(t')=t,$ and let $\chi'_\lambda$ be the character given by $t'.$
Since $\chi'_\lambda$ is in general position, we have that $t'$ defines an irreducible representation $\rho'$ of $\ms{M}'_i(\mf{f})$. 



\begin{lemma} The restriction of $\rho'$ to $\ms{M}_i(\mf{f})$ is isomorphic to $\rho.$ \end{lemma}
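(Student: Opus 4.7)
The plan is to exhibit both $\rho$ and $\rho'$ as Deligne--Lusztig characters, and then invoke the standard compatibility of Deligne--Lusztig induction with a regular embedding (equivalently, with a surjection of dual groups with central torus kernel). Concretely: since $\chi_\lambda$ is in general position on $\ms{S}_i^{F_\lambda}$, the representation $\rho$ attached to $t$ is, up to sign, the irreducible cuspidal Deligne--Lusztig character $R_{\ms{S}_i,\chi_\lambda}^{\ms{M}_i}$; similarly, since $\chi'_\lambda$ restricts to $\chi_\lambda$ on $\ms{S}_i^{F_\lambda}$ (this is exactly the content of $\psi(t')=t$), the character $\chi'_\lambda$ remains in general position on $(\ms{S}'_i)^{F_\lambda}$, and $\rho'$ is the irreducible cuspidal $R_{\ms{S}'_i,\chi'_\lambda}^{\ms{M}'_i}$.

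Next I would verify the geometric setup. The inclusion $\ms{M}_i\hookrightarrow \ms{M}'_i$ is a regular embedding (in all rows of the table, $\ms{M}'_i$ is obtained from $\ms{M}_i$ by enlarging the center so that it is connected, and the derived groups coincide), and the dual morphism is precisely the surjection $\psi:(\ms{M}'_i)^*\to \ms{M}_i^*$ with central-torus kernel described in the paragraph above. One checks that $\ms{S}'_i\cap \ms{M}_i=\ms{S}_i$, that $F_\lambda$ is compatible on both sides, and therefore $\chi'_\lambda\big|_{\ms{S}_i^{F_\lambda}}=\chi_\lambda$ by construction of $t'$.

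Then I would apply the restriction formula for Deligne--Lusztig characters along a regular embedding (see Digne--Michel or Bonnaf\'e): if $\iota:H\hookrightarrow \tilde H$ is a regular embedding, $\tilde T$ is an $F$-stable maximal torus of $\tilde H$ with $T=\tilde T\cap H$, and $\tilde\theta$ is a character of $\tilde T^F$ with restriction $\theta=\tilde\theta|_{T^F}$, then
\[
R_{\tilde T,\tilde\theta}^{\tilde H}\Big|_{H^F}\;=\;R_{T,\theta}^{H}.
\]
Applying this with $(\tilde H,\tilde T,\tilde\theta)=(\ms{M}'_i,\ms{S}'_i,\chi'_\lambda)$ and $(H,T,\theta)=(\ms{M}_i,\ms{S}_i,\chi_\lambda)$ yields
\[
\rho'\big|_{\ms{M}_i(\mf f)}\;=\;\pm R_{\ms{S}'_i,\chi'_\lambda}^{\ms{M}'_i}\Big|_{\ms{M}_i(\mf f)}\;=\;\pm R_{\ms{S}_i,\chi_\lambda}^{\ms{M}_i}\;=\;\rho,
\]
with matching signs because the signs $\epsilon_{\ms{T}}\epsilon_{\ms{G}}$ depend only on the relative $\mf f$-ranks, which are preserved under the regular embedding.

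The main obstacle is bookkeeping rather than substance: I must make sure in every row of the table of Section~6.3 that the chosen embedding is genuinely regular (same derived group, central torus added), that a choice of lift $t'$ exists inside $(\ms{S}'_i)^{*F^*_\lambda}$ and is again regular, and that the $F$-structures transport correctly under $\mathrm{Ad}(p_{GL_{2m}},\overline{p_{\lambda(\ms{G}_\lambda)}})$ so that the restriction formula can be applied verbatim. Once these compatibilities are in place, the DL restriction identity gives the result.
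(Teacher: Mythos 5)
Your approach is essentially the same as the paper's: the paper establishes the restriction identity for Deligne--Lusztig virtual characters directly via the character formula [Ca, 7.2.8] and then matches the signs $\epsilon_{\ms{M}}\epsilon_{\ms{S}}$, while you invoke the packaged ``restriction along a regular embedding'' theorem (Digne--Michel/Bonnaf\'e), which is proved by exactly that character-formula computation. The bookkeeping you flag (compatibility of tori, Frobenius structures, and the sign $\epsilon_{\ms{T}}\epsilon_{\ms{G}}$ depending only on $\mf{f}$-ranks) is what the paper carries out, so the two arguments coincide in substance.
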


\begin{proof} For $i=0^+,0^\ddag$, $\ms{M}'_i=\ms{M}_i$ so we will not consider these cases. 
Let $R_{\ms{S}'_i}(t')$ be the virtual character (up to a sign) of the representation of $\ms{M}'^{F_\lambda}_i$ given by $t'$. Then $\ms{S}_i^{F_\lambda}\subset \ms{S}'^{F_\lambda}_i$ where $t$ gives the character of $\chi_\lambda$ of $\ms{S}_i^{F_\lambda}$ and $t'$ gives a character $\chi'_\lambda$ of $\ms{S}'^{F_\lambda}_i$ such that 
$\chi'_\lambda(t)=\chi_\lambda(t)$ for $t\in\ms{S}_i^{F_\lambda}.$ Let $R\vert_{\ms{M}_i}$ be the restriction of $R_{\ms{S}'_i}(t')$ to $\ms{M}_i^{F_\lambda}.$
By the character formula [Ca, 7.2.8] we have
\bdm R\vert_{\ms{M}_i}(g)=R_{\ms{S}_i}(t)(g),\quad g\in\ms{M}_i,\edm
so they are the same virtual character. Since $\epsilon_{\ms{M}'_i}\epsilon_{\ms{S}'_i}=\epsilon_{\ms{M}_i}\epsilon_{\ms{S}_i}$ we have
 $\rho' \vert_{\ms{M}_i(\mf{f})}\cong \rho.$
\end{proof}

\begin{lemma} The homomorphism $\psi:\ms{G}'^*_i\longrightarrow \ms{G}_i^*$ central kernel. \end{lemma}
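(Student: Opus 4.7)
The plan is to reduce the statement to the single structural fact that the enlargement $\ms{G}_i \hookrightarrow \ms{G}'_i$ is an isomorphism on derived subgroups, after which a straightforward duality plus commutator argument finishes the job. The construction of $\ms{G}'_i$ from $\ms{G}_i$ (see the table in Section 6.3 and the appendix) is designed solely to enlarge the center to a connected one: in every case one replaces a factor $GSpin_{2n}$ by $GSpin_{2n}^\sim$, which by definition has the same root system and coroot system, and only differs by a central torus. Concretely, I would verify case by case from the root data listed in Section 6.3 and the appendix that the inclusion $\ms{G}_i \hookrightarrow \ms{G}'_i$ induces a bijection of root data of the derived groups. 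This is essentially the only content that must be checked by hand, and it is the main (if routine) obstacle.

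Once this is in place, the dual map $\psi:\ms{G}'^{*}_i \to \ms{G}_i^*$ is described as follows. The inclusion $\ms{G}_i \hookrightarrow \ms{G}'_i$ corresponds to a morphism of root data in which $X'_i \twoheadrightarrow X_i$ (restriction of characters) and $X^\vee_i \hookrightarrow X'^\vee_i$ (inclusion of cocharacters), and these maps are compatible with identifications $\Phi'_i \leftrightarrow \Phi_i$ and $\Phi'^\vee_i \leftrightarrow \Phi_i^\vee$. Dualizing swaps the roles of characters and cocharacters, so $\psi$ is the surjection of reductive groups with dual root datum map $X_i^\vee \hookrightarrow X'^\vee_i$ on characters. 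Because this map is a bijection on roots and coroots, $\psi$ restricts to an isomorphism of derived groups $(\ms{G}'^{*}_i)_{\mathrm{der}} \xrightarrow{\sim} (\ms{G}^*_i)_{\mathrm{der}}$. In particular $\ker\psi \cap (\ms{G}'^{*}_i)_{\mathrm{der}} = 1$.

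Finally, I would run the commutator argument to promote this trivial intersection to centrality. Fix $g \in \ker\psi$ and $h \in \ms{G}'^{*}_i$. Since $\ker\psi$ is a normal subgroup, $[g,h] \in \ker\psi$; on the other hand $[g,h] \in (\ms{G}'^{*}_i)_{\mathrm{der}}$ by definition. Hence $[g,h] \in \ker\psi \cap (\ms{G}'^{*}_i)_{\mathrm{der}} = 1$, so $g$ commutes with every $h \in \ms{G}'^{*}_i$, i.e.\ $g \in Z(\ms{G}'^{*}_i)$. This gives $\ker\psi \subset Z(\ms{G}'^{*}_i)$, which is the desired conclusion. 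The only step requiring work is the case-by-case structural check that derived groups are preserved under $\ms{G}_i \hookrightarrow \ms{G}'_i$; the duality statement and the commutator trick are formal.
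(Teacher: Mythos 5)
Your proof has a genuine gap, in the step where you pass from ``$\ms{G}_i\hookrightarrow\ms{G}'_i$ is an isomorphism on derived groups'' to ``$\psi$ restricts to an isomorphism of derived groups of the duals.'' That implication is false, and a bijection of roots and coroots only yields a central isogeny of derived groups, not an isomorphism. The simplest counterexample is $SL_2\hookrightarrow GL_2$: this is the identity on derived groups, but the dual surjection $GL_2\to PGL_2$ restricts on derived groups to the two-to-one covering $SL_2\to PGL_2$. The same phenomenon occurs in the cases at hand. For instance $\ms{G}_{4^-}=GSpin_8\hookrightarrow GSpin_8^\sim=\ms{G}'_{4^-}$ is an isomorphism on derived groups, but on the dual side one has $\ms{G}^*_{4^-}=GSO_8$ with derived group $SO_8$, while the derived group of $(GSpin_8^\sim)^*$ is simply connected, namely $Spin_8$ (the coroot lattice coincides with the cocharacter lattice of the torus in the derived group, as one checks directly from the root datum in the appendix). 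Thus $\psi$ restricted to derived groups is $Spin_8\to SO_8$, and $\ker\psi\cap(\ms{G}'^{*}_i)_{\mathrm{der}}$ is a nontrivial $\mb{Z}/2$, not $1$. Your commutator argument then only yields $[g,h]\in\ker\psi\cap(\ms{G}'^{*}_i)_{\mathrm{der}}\ne 1$, not $[g,h]=1$, and the conclusion does not follow as written.

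The idea can probably be rescued: one would need to show that the finite group $\ker\psi\cap(\ms{G}'^{*}_i)_{\mathrm{der}}$ is \emph{central} in $\ms{G}'^{*}_i$ (rather than trivial), and then argue separately that $[g,h]\in Z(\ms{G}'^{*}_i)$ for all $h$ forces $g\in Z(\ms{G}'^{*}_i)$ for a connected reductive group; neither of these steps is in your write-up. The paper instead avoids derived groups entirely and argues by direct computation: it writes down $\psi$ restricted to the maximal torus of each $\ms{G}'^{*}_i$ (treating $i=0^+,0^\ddag$ where $\psi$ is an isomorphism, $i=2m^+,2^\ddag$ where the kernel is a visible $GL_1$ factor, and the remaining cases by reducing to the $GSpin_{2n}\hookrightarrow GSpin_{2n}^\sim$ piece), computes the kernel on the torus explicitly, and checks it lies in the explicitly described center of $(GSpin_{2n}^\sim)^*$. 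That concrete check is what you should supply; the structural argument you propose needs the additional care described above to close the gap.
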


\begin{proof} For $i=0^+,0^\ddag$, $\ms{T'}_i^*=\ms{T}_i^*$, and for $i=2m^+, 2^\ddag$, the kernel of $\psi$ is $\{(g,1)\vert \,\,g\in GL_1\}$ which is in the center of  $GL_1\times\ms{M}_i^*.$ 
For the remaining cases it suffices to show the homomorphism $\psi\vert_{(GSpin_{2n}^\sim)^*}:\ms{T}'^*\subset(GSpin_{2n}^\sim)^*\longrightarrow \ms{T}^*\subset(GSpin_{2n})^*$ has kernel contained in the center of $\ms{G}'^*_i.$
The center of $(GSpin_{2n}^\sim)^*$,
\bdm \{E_{-1}(\mu)E_1(\nu)\dots E_n(\nu)E_0(\nu^2);\,\, \mu,\nu\in GL_1\},\edm
 is given by all elements of the split torus $\ms{T'}_i^*$ that belong to the kernel of all the simple roots. 
We have \bdm \psi\vert_{(GSpin_{2n}^\sim)^*}:\ms{T'}^*=\prod_{j=-1}^n E_j(\lambda_j)\longrightarrow \ms{T}^*=\prod_{j=0}^n E_j(\lambda_j),\quad \lambda_j\in GL_1,\edm is given by
\bdm E_{-1}(\lambda_{-1})E_0(\lambda_0)\dots E_n(\lambda_n)\quad\mapsto \quad E_0(\lambda_0)\dots E_n(\lambda_n),\edm which has kernel contained in the center of $\ms{G}'^*_i.$
\end{proof}

\begin{lemma} The group $C_{\ms{G}'^*_i(\bar{\mf{f}})}(t')$ is connected, reductive with root system
\begin{itemize}
\item[\textup{(}i\textup{)}] type $(A_1)^2$ and Weyl group $W_{A_1}^2$ if $i=2m^+\,(m=2), \, 1^\dag$;
\item[\textup{(}ii\textup{)}] type $(A_2)^2$ and Weyl group $W_{A_2}^2$ if $i=0^+\,(m=2), \,\,\,\,\,\, 5^\dag;$
\item[\textup{(}iii\textup{)}] type $A_1$ and Weyl group $W_{A_1}$ if  \,\,\,\,\, $i=2m^+\,(m=1), \, 2^-, \, 2^\ddag\,(case \,1), \, 0^\ddag\,(case\, 2)$;
 \item[\textup{(}iv\textup{)}] type $A_2$ and Weyl group $W_{A_2}$ if \,\,\,\,\, $i=0^+\,(m=1), \,\,\,\,\,\, 4^-, \, 0^\ddag\,(case \,1), \, 2^\ddag\,(case\, 2)$.
\end{itemize}
In each case there is one orbit on the simple roots for $W_{\ms{G}'^*_i},$ the Weyl group of $C_{\ms{G}'^*_i(\bar{\mf{f}})}(t'),$ under the action of the dual Frobenius $F^*_\lambda$ on $\ms{G}'^*_i.$ 
In addition, $C_{\ms{M}'^*_i(\bar{\mf{f}})}(t')=\ms{S}'^*_i.$
\end{lemma}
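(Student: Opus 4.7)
The plan is a case-by-case verification organized into three stages corresponding to the three assertions of the lemma.

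For connectedness and reductivity, I would invoke the fact that by construction in Section 6.3 each ambient group $\ms{G}'^*_i$ has connected center. Steinberg's theorem on centralizers of semisimple elements in a reductive group with connected center (equivalently, simply connected derived group on the dual side) then gives that $C_{\ms{G}'^*_i(\bar{\mf{f}})}(t')$ is connected. Reductivity of the centralizer of any semisimple element in a connected reductive group is automatic, so the stated Weyl groups are then just the Weyl groups of the root systems that will be computed in the next step.

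To identify the root system, the standard formula reads
\[
\Phi(C_{\ms{G}'^*_i}(t'), \ms{T}'^*_i) = \{\alpha \in \Phi(\ms{G}'^*_i, \ms{T}'^*_i) : \alpha(t') = 1\},
\]
so the computation reduces to checking which roots of $\ms{G}'^*_i$ evaluate to $1$ on $t'$. The element $t'$ is regular on $\ms{M}'^*_i$ because the character $\chi_\lambda$ on the $\ms{G}^*_\lambda$-factor and $\eta$ on the $GL_{2m}$-factor are in general position by construction; this regularity is precisely the assertion $C_{\ms{M}'^*_i(\bar{\mf{f}})}(t') = \ms{S}'^*_i$, which I would record first. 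Consequently no root of the Levi $\ms{M}'^*_i$ kills $t'$, and all new roots contributing to $C_{\ms{G}'^*_i}(t')$ arise from the opposite unipotent radicals of the parabolic $\ms{P}'_i$: these are roots which pair an eigenvalue from the $GL_{2m}$-component with one from the $\ms{G}^*_\lambda$-component, possibly combined with the similitude character. Reading the eigenvalues of $t'$ off the tables in Section 4.5 and Section 5.1 (they are Frobenius orbits of a fixed $\tau$ in the irreducible case, and of two norm-matched elements $\tau_1, \tau_2$ in the $\phi = \phi_1 \oplus \phi_2$ case), one enumerates by hand the roots at which $t'$ takes value $1$; the result is the asserted Dynkin type in each of the eight cases.

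For the Frobenius orbit assertion, the eigenvalues of $t'$ live in Coxeter-twisted tori, so $F^*_\lambda$ cyclically permutes them via $x \mapsto x^q$. Transporting this cycle to the simple roots of the centralizer obtained above shows transitivity in each case; in the reducible-parameter cases one uses $N_{\mb{F}_{q^2}/\mb{F}_q}(\tau_1) = N_{\mb{F}_{q^2}/\mb{F}_q}(\tau_2)$ to see that $\tau_1$ and $\tau_2$ are amalgamated into a single $F^*_\lambda$-orbit on simple roots.

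The main obstacle is the bookkeeping in the second stage: for each of the eight cases one must realize $\ms{G}'^*_i$ concretely in terms of the root data given in the Appendix (in particular for the auxiliary groups $GSpin^\sim_{2n}$), lift $t$ to a specific $t' \in \ms{G}'^*_i(\bar{\mf{f}})$ (the central ambiguity in the lift being harmless by Lemma 6.5), and carefully track similitude normalizations so that pairing conditions of the form $\tau^{q^i}\tau^{q^j} = c$ with $c$ the similitude factor are correctly identified with vanishing roots. The appearance of two $A$-type components in the $0^+(m=2)$, $2m^+(m=2)$, $1^\dag$ and $5^\dag$ cases is the most delicate point: one copy comes from ``difference'' pairings between the two factors of $\ms{M}'^*_i$, and a parallel copy comes from ``sum'' pairings twisted by the similitude character, and one must verify that these are genuinely decoupled rather than forming a single larger irreducible component.
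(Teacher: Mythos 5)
Your plan follows essentially the same route as the paper's proof: connectedness via Steinberg's theorem using the connected center of $\ms{G}'_i$, reduction of the root system computation to evaluating roots on the semisimple element, regularity in the Levi giving $C_{\ms{M}'^*_i}(t')=\ms{S}'^*_i$, and the cyclic Frobenius action producing a single orbit on the simple roots; the paper does the bookkeeping by first conjugating $t'$ into the split torus (getting $s'$ with its $F_w^*$-structure) and then using the central-kernel property of $\psi$ to compute in $\ms{G}_i^*$ rather than $\ms{G}'^*_i$, which is the same device you invoke for the lift ambiguity, though the relevant reference is Lemma 6.6 (central kernel of $\psi$), not Lemma 6.5. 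The only thing missing from your proposal is the actual case-by-case tabulation, which you correctly identify as the substance of the verification.
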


\begin{proof} Over $\bar{\mf{f}}$, there exists $x\in \ms{M}_i^*$ such that $x\ms{S}_i^*x'^{-1}=\ms{T}_i^*$. 
Again over $\bar{\mf{f}}$, there exists $x'\in \ms{M}'^*_i$ such that $\psi(x')=x$ and $x'\ms{S}'^*_ix'^{-1}=\ms{T}'^*_i$. 
Let $s=x^{-1}t x$ and $s'=x'^{-1}t'x',$ where $\psi(s')=s.$ We have an
isomorphism of reductive groups
\bdm \varphi : C_{\ms{G}'^*_i}(s')\rightarrow C_{\ms{G}'^*_i}(t'), \quad \varphi(z)=x'zx'^{-1}.\edm
So it suffices to compute $C_{\ms{G}'^*_i}(s')$ with its $F^*_w$ structure, which we do in the following.

As the center of $\ms{G}'_i$ is connected, centralizers of semisimple elements in $\ms{G}'^*_i$ are connected. Therefore $C_{\ms{G}'^*_i}(s')$ is generated by $\ms{T}'^*_i$ and the root groups $U_a$ such that $a(s')=1.$
Since the map \bdm\psi:\ms{G}'^*_i\rightarrow \ms{G}^*_i\edm has central kernel,
it suffices to compute the root groups $U_a$ such that $a(s)=1.$

The elements $s$ are constructed from the elements, also called $s$, listed in the tables in Section 4.5 and Section 5.1. 
In the case $\rho=R_\sigma^\ddag\otimes R_\pi^\ddag,$ we have
$s=(\mr{diag}(\tau_i, \tau_i^q),\mr{diag}(\tau_i, \tau_j, \tau_j^q, \tau_i^q))$ or $s=(\mr{diag}(\tau_i, \tau_i^q),\mr{diag}(\tau_j, \tau_i, \tau_i^q, \tau_j^q))$, $i\ne j \in \{1, 2\},$  is an element of $GL_2\times (GSO_2(\bar{\mf{f}})\times GSp_2(\bar{\mf{f}}))^\circ.$ Let case 1 and case 2 refer to the situations where
\[\textrm{case 1}\longleftrightarrow \mr{diag}(\tau_i,\tau_i^q)\in GSp_2(\bar{\mf{f}}),\quad \textrm{case 2}\longleftrightarrow \mr{diag}(\tau_i,\tau_i^q)\in GSO_2(\bar{\mf{f}}).\]
Let $I$ be the root system of $W_{\ms{G}'^*_i}$, the Weyl group of $C_{\ms{G}'^*_i}(s').$ Then in the following table we display the different possibilities.


\begin{center}\begin{tabular}{| c | c | c | c |}
\hline
$i$ & $\ms{G}_i^*(\bar{\mf{f}})$ & $I$ & $W_{\ms{G}'^*_i}$ \\
\hline
$4^+$ & $(GSO_{8}\times GSp_4)^\circ$ & $\{\pm(e_1^*+e_3^*-e_0^*), \pm(e_2^*+e_4^*-e_0^*)\}$ & $W_{A_1}^2$ \\
\hline
$0^+$ & $GSp_{12}$ & $\langle e_1^*-e_5^*, e_3^*+e_5^*-e_0^*, e_2^*-e_6^*, e_4^*+e_6^*-e_0^* \rangle$ & $W_{A_2}^2$ \\
\hline
$2^+$ & $(GSO_{4}\times GSp_4)^\circ$ & $\{\pm(e_1^*+e_2^*-e_0^*)\}$ & $W_{A_1}$ \\
\hline
$0^+$ & $GSp_{8}$ & $\langle e_1^*-e_3^*, e_2^*+e_3^*-e_0^* \rangle$ & $W_{A_2}$ \\
\hline
$2^-$ & $(GSp_4\times GSO_4)^\circ$ & $\{\pm(e_1^*+e_2^*-e_0^*)\}$ & $W_{A_1}$ \\
\hline
$4^-$ & $GSO_8$ & $\langle e_1^*-e_3^*, e_2^*+e_3^*-e_0^* \rangle$ & $W_{A_2}$ \\
\hline
$1^\dag$ & $(GSp_8\times GSO_4)^\circ$ & $\{\pm(e_1^*+e_3^*-e_0^*), \pm(e_2^*+e_4^*-e_0^*)\}$ & $W_{A_1}^2$ \\
\hline
$5^\dag$ & $GSO_{12}$ & $\langle e_1^*-e_5^*, e_3^*+e_5^*-e_0^*, e_2^*-e_6^*, e_4^*+e_6^*-e_0^* \rangle$ & $W_{A_2}^2$ \\
\hline
$0^\ddag$ & $(GSp_6\times GSO_2)^\circ$ & case 1, \quad$\langle e_1^*-e_3^*, e_2^*+e_3^*-e_0^* \rangle$ & $W_{A_2}$ \\
\hline
$2^\ddag$ & $(GSp_2\times GSO_6)^\circ$ & case 1, \quad\quad $\{\pm(e_1^*+e_2^*-e_0^*)\}$ & $W_{A_1}$ \\
\hline
$0^\ddag$ & $(GSp_6\times GSO_2)^\circ$ & case 2, \quad\quad $\{\pm(e_1^*+e_2^*-e_0^*)\}$ & $W_{A_1}$ \\
\hline
$2^\ddag$ & $(GSp_2\times GSO_6)^\circ$ & case 2,  \quad$\langle e_1^*-e_4^*, e_2^*+e_4^*-e_0^* \rangle$ & $W_{A_2}$ \\
\hline
\end{tabular}\end{center}

\renewcommand{\arraystretch}{1}

Note that in each case $F^*_w$ acts transitively on the simple roots in $I$. For $i=4^+$, $F^*_w$ interchanges $e_1^*+e_3^*-e_0^*$ and $e_2^*+e_4^*-e_0^*$.
For $i=0^+$, 
\bdm F^*_w(e_1^*-e_5^*)=e^*_4+e_6^*-e_0^*,\quad F^*_w(e_3^*+e_5^*-e_0^*)=e^*_2-e_6^*,\edm 
\bdm F^*_w(e_2^*-e_6^*)=e^*_1+e_5^*, \quad F^*_w(e_4^*+e_6^*-e_0^*)=e^*_3+e_5^*-e_0^*.\edm
For $i=2^+$, $F^*_w$ fixes $e_1^*+e_2^*-e_0^*.$ For $i=0^+,$ $F_w^*$ interchanges $e_1^*-e_3^*$ and $e_2^*+e_3^*-e_0^*.$
The other cases are similar.

As $\chi_\lambda$ is in general position, $s$ is not fixed by any element of the Weyl group $(N_{\ms{M}_i^*}(\ms{T}_i^*)/\ms{T}_i^*)^{F^*_w}$ so $C_{\ms{M}_i^*}(s)=\ms{T}_i^*.$ Since $\psi:\ms{T}'^*_i\longrightarrow \ms{T}^*_i$ has central kernel, we have that
$C_{\ms{M}'^*_i}(s')=\ms{T}'^*_i$ which implies \bdm C_{\ms{M}'^*_i}(t')=\ms{S}'^*_i.\edm
\end{proof}

\begin{lemma} We have $p_i'=p_i.$ \end{lemma}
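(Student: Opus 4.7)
The plan is to exploit the fact that, in each case appearing in the table of Section 6.3, the enlargement $\ms{M}_i \hookrightarrow \ms{M}'_i$ and $\ms{G}_i \hookrightarrow \ms{G}'_i$ is obtained by extending the center by a copy of $GL_1$ (compare the root data of $GSpin_{2n}$ and $GSpin_{2n}^\sim$ in the appendix, or the added $GL_1$-factor when $\ms{G}'_i$ adds an extra character coordinate). In particular $\ms{G}'_i = \ms{G}_i \cdot Z(\ms{G}'_i)^0$, and the standard parabolic $\ms{P}'_i \subset \ms{G}'_i$ corresponding to $\ms{P}_i$ satisfies $\ms{P}'_i = \ms{P}_i \cdot Z(\ms{G}'_i)^0$ and $\ms{P}'_i \cap \ms{G}_i = \ms{P}_i$.

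First I would use this group-theoretic relation together with Lemma 6.5 to compare the two induced modules. Since $\rho'\vert_{\ms{M}_i(\mf{f})} \cong \rho$ and $\ms{G}'_i(\mf{f}) = \ms{G}_i(\mf{f}) \cdot Z(\ms{G}'_i)^{F_\lambda}$, a direct Mackey computation (or Frobenius reciprocity applied to the short exact sequence of groups) gives an isomorphism of $\ms{G}_i(\mf{f})$-modules
\[
\bigl(\mr{Ind}_{\ms{P}'_i}^{\ms{G}'_i}\rho'\bigr)\bigl\vert_{\ms{G}_i(\mf{f})} \;\cong\; \mr{Ind}_{\ms{P}_i}^{\ms{G}_i}\rho,
\]
because the central torus acts by a scalar character on $\rho'$ and can be ``pulled out'' of both the induction and the restriction.

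Next I would analyse how the decomposition $\mr{Ind}_{\ms{P}'_i}^{\ms{G}'_i}\rho' = \rho'_1 \oplus \rho'_2$ (guaranteed by [Lu, 8.6] because $\ms{G}'_i$ has connected center and, by Lemma 6.7, $C_{\ms{G}'^*_i}(t')$ has a two-element Weyl group modulo the $F^*_\lambda$-orbit structure) restricts to $\ms{G}_i(\mf{f})$. Since the summands $\rho'_j$ differ by composition with a central twist that is trivial on $\ms{G}_i(\mf{f})$, and since by the connected-center case the two constituents are non-isomorphic, their restrictions $\rho'_j\vert_{\ms{G}_i(\mf{f})}$ remain irreducible and non-isomorphic, giving precisely the decomposition $\mr{Ind}_{\ms{P}_i}^{\ms{G}_i}\rho = \rho_1 \oplus \rho_2$. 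In particular $\dim\rho_j = \dim\rho'_j$, and by the formula of Howlett--Lehrer cited in Section 6.2 one obtains
\[
p_i \;=\; \frac{\dim\rho_2}{\dim\rho_1} \;=\; \frac{\dim\rho'_2}{\dim\rho'_1} \;=\; p'_i.
\]

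The main obstacle is verifying that the two irreducible summands of $\mr{Ind}_{\ms{P}'_i}^{\ms{G}'_i}\rho'$ do not collapse to a single isomorphism class upon restriction to $\ms{G}_i(\mf{f})$, since in principle Clifford theory could merge them when the central torus orbit is non-trivial. This is where one uses the detailed structure of $C_{\ms{G}'^*_i(\bar{\mf{f}})}(t')$ from Lemma 6.7: the Weyl group there acts faithfully on the image of $t'$ in $\ms{G}^*_i$, so the two Hecke-algebra characters of $\mc{H}(\ms{G}'_i,\rho')$ are already distinguished on the $\mc{H}(\ms{G}_i,\rho)$-subalgebra, forcing the restrictions to stay non-isomorphic and completing the equality $p_i = p'_i$.
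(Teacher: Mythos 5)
Your overall strategy matches the paper's proof almost step for step: apply [Lu, 8.6] on the connected-center group $\ms{G}'_i$ to get $\mr{Ind}_{\ms{P}'_i}^{\ms{G}'_i}\rho'=\rho'_1\oplus\rho'_2$ with parameter $p'_i$, use Mackey plus $\rho'\vert_{\ms{M}_i}\cong\rho$ (Lemma~6.5) to identify the restriction to $\ms{G}_i(\mf{f})$ with $\mr{Ind}_{\ms{P}_i}^{\ms{G}_i}\rho=\rho_1\oplus\rho_2$, and conclude $p_i=p'_i$ by comparing degree ratios via Howlett--Lehrer. That is exactly the paper's argument.

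Two points of caution in your write-up. First, your claim that ``the summands $\rho'_j$ differ by composition with a central twist that is trivial on $\ms{G}_i(\mf{f})$'' is false: $\rho'_1$ and $\rho'_2$ have different dimensions (that degree ratio is precisely the Hecke parameter), so they cannot be twists of each other by a one-dimensional character. Second, the ``main obstacle'' you flag — that the two constituents might collapse upon restriction — does not actually require any appeal to $C_{\ms{G}'^*_i}(t')$ or to Clifford theory. It already follows from the fact (Morris, quoted in Section~6.2) that $\mc{H}(\ms{G}_i,\rho)$ is two-dimensional: $\mr{Ind}_{\ms{P}_i}^{\ms{G}_i}\rho=\rho_1\oplus\rho_2$ is multiplicity-free with exactly two constituents, so once the Mackey isomorphism pins the restriction down to this module, each nonzero $\rho'_j\vert_{\ms{G}_i}$ must be exactly one of the $\rho_k$, and the two are automatically non-isomorphic. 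The paper relies on exactly this dimension count rather than any finer structure of the dual-side centralizer.
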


\begin{proof} By [Lu, Thm 8.6], \bdm \mr{End}_{\ms{G}'_i(\mf{f})}(\mr{Ind}_{\ms{P}'_i(\mf{f})}^{\ms{G}'_i(\mf{f})}\rho')=\langle T_e, T_i\rangle\edm  as an algebra, where $T_e$ is supported on $\ms{P}'_i$ and $T_i$ satisfies the relation
$T_i^2=(p'_i-1)T_i+p'_i.$ Here, $T_i$ corresponds to the unique $F^*_w$-orbit on $I.$
Therefore, since $\mr{End}_{\ms{G}'_i}(\mr{Ind}_{\ms{P}'_i}^{\ms{G}'_i}\rho')$ has dimension two, we have that
\bdm \mr{Ind}_{\ms{P}'_i}^{\ms{G}'_i}\rho'=\rho'_1\oplus\rho'_2,\edm where $\rho'_1$ and $\rho'_2$ are distinct irreducible representations of $\ms{G}'_i$.
As $\rho'\vert_{\ms{M}_i}=\rho,$ by Mackey's induction restriction theorem,
\bdm \mr{Ind}_{\ms{P}'_i}^{\ms{G}'_i}(\rho')\vert_{\ms{G}_i}\cong \mr{Ind}_{\ms{P}_i}^{\ms{G}_i}(\rho)\implies (\rho'_1\oplus\rho'_2)\vert_{\ms{G}_i}=\rho_1\oplus\rho_2.\edm
Therefore the quotient of the degrees of $\rho'_1$ and $\rho'_2$ is equal to the quotient of the degrees of $\rho_1$ and $\rho_2$, so $p'_i=p_i.$
\end{proof}

\begin{lemma} We have:
\begin{itemize}
\item[\textup{(}i\textup{)}] $p_i=q^2$ \, if  \,\, $i=2m^+\,(m=2), \, 1^\dag$;
\item[\textup{(}ii\textup{)}] $p_i=q^6$ \, if \,\, $i=0^+\,(m=2), \,\,\,\,\,\, 5^\dag;$
\item[\textup{(}iii\textup{)}] $p_i=q$ \,\,\, if  \,\, $i=2m^+\,(m=1), \; 2^-, \,\, 2^\ddag\,(case \,1), \,\, 0^\ddag\,(case \,2)$;
 \item[\textup{(}iv\textup{)}] $p_i=q^3$ \, if \,\, $i=0^+\,(m=1), \,\,\,\,\,\, 4^-, \,\, 0^\ddag\,(case \,1), \,\, 2^\ddag\,(case \,2)$.
\end{itemize}
\end{lemma}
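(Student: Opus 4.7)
The plan is to apply Lusztig's theorem [Lu, 8.6] directly. By Lemma 6.8 we may replace $(\mathsf{G}_i, \rho)$ with $(\mathsf{G}'_i, \rho')$, which has the essential advantage that $\mathsf{G}'_i$ has connected center, so Lusztig's theorem applies. Recall from Lemma 6.7 that $C_{\mathsf{M}'^*_i(\bar{\mathfrak{f}})}(t') = \mathsf{S}'^*_i$, which is exactly the cuspidality condition needed to input $\rho'$ into [Lu, 8.6], and that $C_{\mathsf{G}'^*_i(\bar{\mathfrak{f}})}(t')$ is a connected reductive group whose root system and Weyl group have been explicitly determined in each case.

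Next, I would invoke the explicit form of Lusztig's theorem: the endomorphism algebra $\mathrm{End}_{\mathsf{G}'_i(\mathfrak{f})}(\mathrm{Ind}_{\mathsf{P}'_i(\mathfrak{f})}^{\mathsf{G}'_i(\mathfrak{f})} \rho')$ is a Hecke algebra whose generators are indexed by the $F^*_\lambda$-orbits on the set of simple reflections in $W_{\mathsf{G}'^*_i}$, and the parameter attached to an orbit $\mathcal{O}$ is $q^{\ell(w_\mathcal{O})}$, where $w_\mathcal{O}$ is the longest element of the parabolic subgroup of $W_{\mathsf{G}'^*_i}$ generated by the reflections in $\mathcal{O}$. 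By Lemma 6.7 there is exactly one such orbit in every case, so the algebra is two-dimensional (consistent with the conclusion of [Mo2, 6.5]) and $p'_i = q^{\ell(w_0)}$, where $w_0$ is the longest element of $W_{\mathsf{G}'^*_i}$ itself.

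It then remains to compute $\ell(w_0)$ in the four cases. For type $A_1$ one has $\ell(w_0) = 1$, giving $p_i = q$; for $(A_1)^2$ one has $\ell(w_0) = 2$, giving $p_i = q^2$; for $A_2$ one has $\ell(w_0) = 3$, giving $p_i = q^3$; and for $(A_2)^2$ one has $\ell(w_0) = 6$, giving $p_i = q^6$. Combining these computations with the case-by-case identification in Lemma 6.7 yields all four assertions. Finally, invoking $p_i = p'_i$ from Lemma 6.8 transfers the result back to $(\mathsf{G}_i, \rho)$ and completes the proof.

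The bookkeeping is the only real obstacle: one must verify that in each of the twelve rows of the table in Lemma 6.7 the $F^*_\lambda$-action has been correctly identified, so that all simple roots of $W_{\mathsf{G}'^*_i}$ really do form one orbit. Once this is confirmed, reading off the length of $w_0$ from the root system type is immediate, and no further computation is needed.
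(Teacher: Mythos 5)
Your approach is essentially the same as the paper's: both proofs reduce to Lemmas 6.7 and 6.8 and then invoke Lusztig's parameter formula. The difference is in the last step. You assert a clean closed-form rule, namely that the parameter attached to an $F^*_\lambda$-orbit $\mathcal{O}$ of simple reflections in $W_{\ms{G}'^*_i}$ is $q^{\ell(w_\mathcal{O})}$, where $w_\mathcal{O}$ is the longest element of the parabolic subgroup generated by $\mathcal{O}$; the paper instead defers the extraction of the parameter from Lusztig's formula 8.2.3 to prior explicit computations for $(W_{A_1}^n,1)$ in [KM, Appendix] and for $(W_{A_2}^n,1)$ in [Sa, \S 5]. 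Your rule does give the same numbers ($q$, $q^2$, $q^3$, $q^6$) and is correct in this setting. However, you should be aware that it is not literally the statement of [Lu, 8.6] or [Lu, 8.2.3]; Lusztig's formula is phrased in terms of generic degree polynomials attached to the cuspidal pair, and the clean $q^{\ell(w_\mathcal{O})}$ expression only drops out because $C_{\ms{M}'^*_i}(t')=\ms{S}'^*_i$ (so the cuspidal data on the Levi is of principal-series type) and the relevant orbit generates a product of $A_1$'s or $A_2$'s. Citing [KM, Appendix] and [Sa, \S 5], where that simplification is worked out, is the safer route; otherwise you should justify the shortcut you are taking.
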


\begin{proof}
Apply formula 8.2.3 of [Lu] to the pair $(W_{A_1}^n, 1)$. This calculation is done in [KM, Appendix], and we have
$p'_i=q^n.$
Apply formula 8.2.3 of [Lu] to the pair $(W_{A_2}^n, 1)$. This calculation is done in [Sa, \S 5] and we have
$p'_i=q^{3n}.$
From the previous lemma, $p'_i=p_i$ in each case.
\end{proof}

\begin{corollary} The parameters $p_i$ of $\mc{H}(G,\rho)$ are unequal.
\end{corollary}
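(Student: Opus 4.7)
The plan is to deduce this corollary directly from Lemma 6.9, which has already computed each individual parameter $p_i$ as a specific power of $q$. By Corollary 6.3, the Hecke algebra $\mc{H}(G,\rho)$ has exactly two nontrivial quadratic generators $T_a, T_b$, where the pair $\{a,b\}$ is one of the four pairs
\[
\{0^+,2m^+\},\quad \{2^-,4^-\},\quad \{1^\dag,5^\dag\},\quad \{0^\ddag,2^\ddag\},
\]
depending on which parahoric $\mc{P}$ was chosen in Section 5.3. So all that remains is to inspect each of these four pairs and verify the inequality $p_a \neq p_b$.

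I would simply tabulate: for $\{0^+, 2m^+\}$ with $m=1$, Lemma 6.9 gives $(p_{0^+}, p_{2^+}) = (q^3, q)$, and with $m=2$ it gives $(q^6, q^2)$; for $\{2^-,4^-\}$ we read off $(q, q^3)$; for $\{1^\dag,5^\dag\}$ we get $(q^2,q^6)$; and for $\{0^\ddag,2^\ddag\}$ the two subcases yield $(q^3,q)$ (case 1) and $(q,q^3)$ (case 2). In every pair the two exponents of $q$ differ, so $p_a \neq p_b$ since $q>1$.

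No step here is really an obstacle; the work has already been done in the proof of Lemma 6.9, whose inputs were the explicit identification of the root systems of $C_{\ms{G}'^*_i(\bar{\mf{f}})}(t')$ carried out in Lemma 6.8 together with Lusztig's formula [Lu, 8.2.3]. The only content of the corollary is that, on pairing up the two generators that actually appear in a given $\mc{H}(G,\rho)$, one always lands on one factor of type $A_1$ and one of type $A_2$ (equivalently, on exponents $1$ and $3$, or $2$ and $6$). This structural observation, namely that the two parabolic directions attached to the maximal parabolic of $G$ through which we induce never produce the \emph{same} finite-group Hecke parameter, is what makes the corollary true, and the enumeration above records it case by case.
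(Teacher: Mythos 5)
Your proof is correct and is essentially the same as the paper's implicit argument: Corollary 6.9 is stated without a separate proof precisely because it follows by inspecting the explicit values computed in the preceding lemma and noting that, for each of the four admissible pairs $\{a,b\}$, the two parameters land in different power classes of $q$. Your tabulation matches the paper's data, so nothing is missing.
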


\section{Reducibility of generalized principal series}

\subsection{A result of Matsumoto}

Let $(W,S)$ be a Coxeter system of type $\tilde{A}_1,$ 
so that $W$ is generated by $S=\{s_1,s_2\}$ where $s_1^2=s_2^2=1$ and $s_1s_2$ has infinite order.
For $k$ a commutative ring and $q$ a real, positive valued, quasi-multiplicative function on $W$, we denote by $k(W,q)$ the Hecke algebra of type $\tilde{A}_1$ associated to $q$. 

Denote by $q_1=q(s_1), \,q_2=q(s_2),$ and assume  $q_2\ge q_1\ge 1.$
Matsumoto [Ma] defines representations $\pi_\xi$ of $k(W,q)$, indexed by $\xi\in\mb{C}^\times$, such that all irreducible finite dimensional representations of $k(W,q)$ 
and all irreducible unitary representations of $k(W,q)$ occur in the composition series of such representations. We have:
\begin{itemize}
\item[(i)] For $\vert\xi\vert=1$, the representations $\pi_\xi$ are irreducible and unitary. 
\item[(ii)] For $\xi\in \mb{R}$ such that $1<\xi \le \sqrt{q_1q_2}, -\sqrt{q_2/q_1}\le \xi<-1,$ we have $\pi_\xi$ is unitary. For $\xi\ne\sqrt{q_1q_2}, \,-\sqrt{q_2/q_1},$ $\pi_\xi$ is irreducible.
\item[(iii)] For $\xi=\sqrt{q_1q_2}, \,-\sqrt{q_2/q_1},$ the composition series of $\pi_\xi$ is of length two. 
\end{itemize}
Let  $\chi_{\xi'}$ denote the irreducible subrepresentation of $\pi_{\sqrt{q_1q_2}}$ and $\chi_{\xi''}$ the irreducible subrepresentation of $\pi_{-\sqrt{q_2/q_1}}.$
Let $S^1$ be the multiplicative group of complex numbers of modulus 1, and $d\xi$ be the Haar measure on $S^1$ such that $\int_{S^1} d\xi=1$. 
Matsumoto then gives the Plancherel formula for $k(W,q).$ 
Let $L^1(W,q)$ be the Banach space of $L^1$ integrable functions $f\in k(W,q)$ with respect to a fixed Haar measure on $W$. 
There is a meromorphic function $c_1$ on $\mb{C}^\times$ such that
for all $f\in L^1(W,q)$,
\begin{align*} f(e)= &
\frac{1}{2}\int_{S^1} \mr{Tr}(\pi_\xi(f))\vert c_1(\xi)\vert^{-2}\, d\xi \\
& + \frac{1-q_1^{-1}q_2^{-1}}{(1+q_1^{-1})(1+q_2^{-1})}\mr{Tr}(\chi_{\xi'}(f)) + \frac{1-q_1q_2^{-1}}{(1+q_1)(1+q_2^{-1})}\mr{Tr}(\chi_{\xi''}(f)).\end{align*}

We have
\begin{theorem} \textup{(}Matsumoto\textup{)} A Hecke algebra of type $\tilde{A}_1$ has two complementary series if and only if $q_1\ne q_2$. \end{theorem}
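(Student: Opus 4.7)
The plan is to read off the complementary series directly from Matsumoto's classification of the unitary dual of $k(W,q)$ given in items (i)--(iii) preceding the theorem statement. By definition, a complementary series is a one-parameter family of unitary, irreducible representations whose members are neither tempered (i.e.\ appearing in the Plancherel decomposition on $S^1$) nor isolated points of the unitary dual. So I would first identify, from Matsumoto's list, which of the $\pi_\xi$ qualify: item (i) gives the tempered principal series on $S^1$, item (iii) gives the two isolated constituents $\chi_{\xi'}, \chi_{\xi''}$, and item (ii) supplies the complementary series.

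Invoking item (ii) under the standing assumption $q_2 \geq q_1 \geq 1$, the unitary irreducible $\pi_\xi$ with $\xi \in \mathbb{R}$, $|\xi| > 1$, and $\xi$ not one of the two reducibility points fall into exactly two open arcs:
\[
(1, \sqrt{q_1 q_2}) \quad \text{and} \quad (-\sqrt{q_2/q_1},\, -1).
\]
Each nonempty arc contributes one complementary series.

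The remaining step is to count the nonempty arcs. The first arc is nonempty precisely when $\sqrt{q_1 q_2} > 1$, which holds in all nondegenerate cases of interest. The second arc is nonempty if and only if $\sqrt{q_2/q_1} > 1$, i.e.\ if and only if $q_2 > q_1$, i.e.\ if and only if $q_1 \ne q_2$. Thus two complementary series exist precisely when $q_1 \ne q_2$, proving the theorem.

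There is essentially no hard obstacle here: all the analytic work (existence, unitarity, and irreducibility of $\pi_\xi$ in the stated ranges, together with the identification of the reducibility points) is absorbed into the statements of (i)--(iii), which are due to Matsumoto. The proof amounts to inspecting these conditions and comparing the two parameter intervals.
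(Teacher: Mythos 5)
Your proposal is correct and follows essentially the same reasoning the paper relies on: the paper does not give a formal proof of Theorem 7.1, but simply states it after recording Matsumoto's classification (i)--(iii) and the Plancherel formula, leaving the reader to read off that the two open arcs $(1,\sqrt{q_1 q_2})$ and $(-\sqrt{q_2/q_1},-1)$ are the complementary series, with the second arc nonempty exactly when $q_2 > q_1$. Your write-up makes that implicit inspection explicit, which is what is intended.
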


\subsection{Plancherel measures}

To proceed we will need to use information about $\mu(s,\pi\boxtimes\sigma),$ the Plancherel measure associated to the generalized principal series $I(s,\pi\boxtimes\sigma).$ 
If $P=M\cdot N$ is the maximal parabolic of $GSpin_{4m+5}$ or $GSpin_{2m+4,2m+1}$ defined in Section 5.2, let $\bar{P}=M\cdot\bar{N}$ be the opposite parabolic, and $I_{\bar{P}}(s,\pi\boxtimes\sigma)$ the corresponding generalized principal series representation. There is a local intertwining operator
\bdm A(s,\pi\boxtimes\sigma,N,\bar{N}):I(s,\pi\boxtimes\sigma)\longrightarrow I_{\bar{P}}(s,\pi\boxtimes\sigma).\edm
The composite $A(s,\pi\boxtimes\sigma,\bar{N},N)\circ A(s,\pi\boxtimes\sigma,N,\bar{N})$ is a scalar operator on $I(s,\pi\boxtimes\sigma)$ and the Plancherel measure is the meromorphic function defined by 
\begin{displaymath} \mu(s,\pi\boxtimes\sigma)^{-1}=A(s,\pi\boxtimes\sigma,\bar{N},N)\circ A(s,\pi\boxtimes\sigma,N,\bar{N}).\end{displaymath}

From [Sil, 5.3-5.4] we have 
\begin{prop}\label{2} \textup{(}Harish-Chandra\textup{)} If $\pi\boxtimes\sigma$ is a unitary supercuspidal representation of $GSpin_5(k)\times GL_r(k)$ or $GSpin_{4,1}(k)\times GL_r(k),$ then for $s$ varying over the real numbers:
\begin{enumerate}
\item[\textup{(}i\textup{)}] If $\sigma\ncong\sigma^\vee(\omega_{\pi}\circ\mr{det})$, then $\mu(0,\pi\boxtimes\sigma)\ne 0$ and $I(s,\pi\boxtimes\sigma)$ is irreducible for all $s\in\mb{R}.$
In this case only $I(0,\pi\boxtimes\sigma)$ is unitary.
\item[\textup{(}ii\textup{)}] If $\sigma\cong\sigma^\vee(\omega_{\pi}\circ\mr{det})$, then there is a unique real $s_0\ge 0$ such that $I(s_0,\pi\boxtimes\sigma)$ is reducible. 
Moreover, $s_0> 0$ if and only if $
\mu(0,\pi\boxtimes\sigma)=0$, in which case $s_0$ is the unique pole of $\mu(s,\pi\boxtimes\sigma)$ on the positive real axis.
\begin{enumerate}
\item[\textup{(}a\textup{)}] When $s_0=0,$ for all $s\ne 0$ we have $I(s,\pi\boxtimes\sigma)$ is irreducible and non-unitary. The representation $I(s_0,\pi\boxtimes\sigma)$ is of length $2,$ with irreducible subquotients tempered representations.
\item[\textup{(}b\textup{)}] When $s_0>0,$ we have $I(s,\pi\boxtimes\sigma)$ is only reducible for $s=\pm s_0.$ For $\vert s\vert<\vert s_0\vert,$ $I(s,\pi\boxtimes\sigma)$ is unitary. The representation $I(s_0,\pi\boxtimes\sigma)$ is of length $2,$ with unique irreducible submodule a discrete series representation.
\end{enumerate}
\end{enumerate}\end{prop}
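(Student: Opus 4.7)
The plan is to follow the standard Harish–Chandra analysis of reducibility of parabolically induced representations from unitary supercuspidals on a maximal Levi, organized around the Plancherel measure $\mu(s,\pi\boxtimes\sigma)$ and the long intertwining operator $A(s,\pi\boxtimes\sigma,N,\bar{N})$. The key dichotomy is whether $\pi\boxtimes\sigma$ admits a nontrivial self-conjugacy under the nontrivial element of the relative Weyl group $W(M,G)=\{1,w\}$ for the maximal parabolic $P=MN$. Because the Levi is $GSpin_5(k)\times GL_{2m}(k)$ (or the pure inner form) and $w$ acts on $M$ by fixing the first factor up to the character $\omega_\pi\circ\det$ and sending $\sigma$ to $\sigma^\vee$, the condition $w\cdot(\pi\boxtimes\sigma)\cong\pi\boxtimes\sigma$ is exactly $\sigma\cong\sigma^\vee(\omega_\pi\circ\det)$.

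First I would handle case (i). When $\sigma\not\cong\sigma^\vee(\omega_\pi\circ\det)$, the cuspidal supports of $I(s,\pi\boxtimes\sigma)$ and $I_{\bar P}(s,\pi\boxtimes\sigma)$ are not linked by any Weyl element fixing the inertial class, so Bernstein's second adjointness together with the geometric lemma show that the intertwining operator $A(s,\pi\boxtimes\sigma,N,\bar N)$ is everywhere holomorphic and nonzero on $\mathrm{Re}(s)\ge 0$; in particular $\mu(0,\pi\boxtimes\sigma)\ne 0$. Irreducibility of $I(s,\pi\boxtimes\sigma)$ for every real $s$ then follows from the standard criterion that a parabolically induced representation from a supercuspidal on a maximal parabolic is reducible only at points where either the leading or trailing intertwining operator has a pole or zero. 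Unitarity holds only at $s=0$ because $I(s,\pi\boxtimes\sigma)$ and $I(-s,\pi\boxtimes\sigma)$ are the duals of one another via the standard Hermitian pairing, and irreducibility at every $s$ forces the invariant Hermitian form to change sign with $s$.

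Next I would treat case (ii). Here $w\cdot(\pi\boxtimes\sigma)\cong\pi\boxtimes\sigma$, so $\mu(s,\pi\boxtimes\sigma)$ is a nonconstant meromorphic function invariant under $s\mapsto -s$. Harish–Chandra's product formula expresses $\mu$ as a rational function of $q^{-s}$ whose zeros and poles interlace in a highly constrained way; in particular $\mu(s,\pi\boxtimes\sigma)$ has at most one pole on the positive real axis, and the set of real reducibility points of $I(s,\pi\boxtimes\sigma)$ equals the set of real zeros of $\mu$ together with the points at which $A(s,\pi\boxtimes\sigma,N,\bar N)$ has a pole on the negative real axis. Combined with the $s\mapsto -s$ symmetry this yields a unique $s_0\ge 0$ of reducibility, and the dichotomy $s_0=0\iff\mu(0,\pi\boxtimes\sigma)\ne 0$ versus $s_0>0\iff\mu(0,\pi\boxtimes\sigma)=0$ drops out directly. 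The length-$2$ statement and the description of the constituents as tempered (when $s_0=0$, via Knapp–Stein / Harish-Chandra $R$-group theory) or discrete series plus Langlands quotient (when $s_0>0$, via the Langlands classification applied to $I(s_0,\pi\boxtimes\sigma)$) complete the structural part.

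Finally, for the unitarity of the complementary series $\{I(s,\pi\boxtimes\sigma):|s|<s_0\}$ in case (ii)(b), I would use the standard argument that the normalized intertwining operator provides a Hermitian form on $I(s,\pi\boxtimes\sigma)$ which is positive definite at $s=0$ (since $I(0,\pi\boxtimes\sigma)$ is unitary and irreducible there), and which can only lose positivity at a reducibility point; hence the form remains positive definite throughout the interval $|s|<s_0$. The main technical obstacle is the analytic control of $\mu(s,\pi\boxtimes\sigma)$: pinning down that it has exactly one pole on $(0,\infty)$ and that this pole coincides with the unique real reducibility point requires the full strength of Harish-Chandra's Maass–Selberg relations on the composition $A(s,\pi\boxtimes\sigma,\bar N,N)\circ A(s,\pi\boxtimes\sigma,N,\bar N)$. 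In the present exposition all of this is black-boxed via the citation to Silberger [Sil, 5.3--5.4], which is legitimate since $\pi\boxtimes\sigma$ is a unitary supercuspidal representation of a Levi of a maximal parabolic, precisely the setting in which Silberger carries out the above analysis.
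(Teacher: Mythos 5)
Your proposal is correct in outline, and it ends up in the same place as the paper: the paper offers no proof of this proposition at all, simply quoting it from Silberger ([Sil, 5.3--5.4]) as Harish-Chandra's theorem on induction from unitary supercuspidals on a maximal Levi, which is exactly the black-boxing you identify in your final paragraph. Your sketch of what lies inside the black box (Bruhat theory for the non-self-conjugate case, Maass--Selberg relations and the pole/zero structure of $\mu$ for the self-conjugate case, $R$-groups and the Langlands classification for the constituents, and positivity of the normalized intertwining operator for the complementary series) is a faithful account of the standard argument, so no gap needs to be flagged.
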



\begin{lemma} 
Let $\pi\boxtimes\sigma$ be a unitary supercuspidal representation where $\pi$ is a supercuspidal representation of $GSpin_{5}(k)$ or $GSpin_{4,1}(k)$ corresponding to a tame regular discrete series $L$-parameter $\phi=\phi_{1}\oplus\dots\oplus\phi_{r}$, $r=1,2,$ by the construction of DeBacker and Reeder given in Section 4.5.
Also $\sigma\cong\sigma_{\phi_i}$, where $1\le i\le r$, is the depth zero supercuspidal representation of $GL_{2m}(k)$ attached to the $L$-parameter $\phi_{i}$ via the local Langlands correspondence for $GL_{2m}.$
Then there are at most two twists $\vert\mr{det}\vert^{iv_j}$ of $\sigma$ such that 
\bdm \sigma \vert\mr{det}\vert^{iv_1}\ncong\sigma\vert\mr{det}\vert^{iv_2}\edm
and such that a representation 
\bdm \mr{Ind}_{P}^{G} \delta_{P}^{1/2} \pi\boxtimes\sigma\vert \mr{det} \vert^{u+iv_j}\edm could reduce for some $u>0$. 
\end{lemma}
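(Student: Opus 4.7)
The plan is to reduce the claim to counting solutions of a squaring equation in a circle group.

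First, I invoke the Harish-Chandra proposition recalled just above. If $\pi\boxtimes\sigma\vert\mr{det}\vert^{iv_j}$ is unitary and the induced representation $\mr{Ind}_{P}^{G}\delta_P^{1/2}\pi\boxtimes\sigma\vert\mr{det}\vert^{u+iv_j}$ is reducible for some real $u>0$, then clause (i) of that proposition forces the inducing data to be self-dual:
\bdm (\sigma\vert\mr{det}\vert^{iv_j})^\vee(\omega_\pi\circ\mr{det})\cong\sigma\vert\mr{det}\vert^{iv_j},\edm
which rearranges to
\bdm \sigma^\vee(\omega_\pi\circ\mr{det})\cong\sigma\vert\mr{det}\vert^{2iv_j}.\edm

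Second, set $\tau=\sigma^\vee(\omega_\pi\circ\mr{det})$, an irreducible supercuspidal representation of $GL_{2m}(k)$. If $\tau$ is not isomorphic to any unramified twist of $\sigma$, there are no admissible $v_j$ and the claim holds vacuously. Otherwise, fix a unitary unramified character $\chi_0\in\mc{X}$ with $\tau\cong\sigma\chi_0$, where $\mc{X}$ denotes the circle group of unitary unramified characters of $GL_{2m}(k)$. The condition on $v_j$ becomes
\bdm \vert\mr{det}\vert^{2iv_j}\in\chi_0\cdot\mr{Stab}_{\mc{X}}(\sigma),\edm
where $\mr{Stab}_{\mc{X}}(\sigma)=\{\chi\in\mc{X}:\sigma\chi\cong\sigma\}.$

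Third, I would show $\mr{Stab}_{\mc{X}}(\sigma)$ is finite. Since $\sigma\cong\sigma_{\phi_i}$ corresponds under local Langlands for $GL_{2m}$ to the irreducible tame regular parameter $\phi_i=\mr{Ind}_{W_{k_{2m}}}^{W_k}\eta$, an unramified $\chi$ stabilizes $\sigma$ if and only if $\eta\cdot\chi\vert_{W_{k_{2m}}}\cong\eta^{\mr{Frob}^j}$ for some $j$; restricting to $\mc{I}$ and using the regularity of $\eta$ forces $\chi\vert_{W_{k_{2m}}}=1$, which pins $\chi$ down to have order dividing $2m$. Thus $\mr{Stab}_{\mc{X}}(\sigma)$ is finite cyclic.

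Finally, the quotient $Q=\mc{X}/\mr{Stab}_{\mc{X}}(\sigma)$ is again a circle group (the circle $\mc{X}\cong S^1$ modulo a finite subgroup), and the condition on $v_j$ translates to $\bar\chi_j^2=\bar\chi_0$ in $Q$, where $\bar\chi_j$ is the image of $\vert\mr{det}\vert^{iv_j}$. The squaring map on $Q\cong S^1$ is a two-to-one surjective homomorphism, so this equation has exactly two solutions. Since $\sigma\vert\mr{det}\vert^{iv_1}\cong\sigma\vert\mr{det}\vert^{iv_2}$ if and only if their classes in $Q$ coincide, these yield at most two non-isomorphic twists, proving the claim. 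The only subtle point is invoking the Harish-Chandra proposition correctly: its clause (i) rules out reducibility at every real $s$ whenever the self-duality condition fails, so the necessary direction used in step one is legitimate.
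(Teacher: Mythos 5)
Your proof is correct and follows essentially the same route as the paper's: both reduce reducibility at $u>0$ to the self-duality condition via Proposition 7.2, both characterize the stabilizer of $\sigma$ among unramified characters as the characters of order dividing $2m$ using the local Langlands correspondence for $GL_{2m}$ and the regularity of $\eta$, and both then count cosets. The only cosmetic difference is that you phrase the final count as solving $\bar\chi^2=\bar\chi_0$ in the quotient circle $Q=\mc{X}/\mr{Stab}_\mc{X}(\sigma)$, whereas the paper fixes a base twist $\sigma_0$ satisfying the self-duality condition and counts the admissible $w$ with $|\cdot|^{iw}$ of order dividing $4m$ modulo those of order dividing $2m$; these are the same two-element count.
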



\begin{proof} 
By Proposition 7.2, 
$I(s,\pi\boxtimes\sigma\vert\mr{det}\vert^{iv})$ can reduce for some real $s>0$ only if \bdm\sigma\vert\mr{det}\vert^{iv}\cong(\sigma\vert\mr{det}\vert^{iv})^\vee(\omega_{\pi}\circ\mr{det}).\edm  
Suppose $\sigma_0$ satisfies $\sigma_0\cong\sigma_0^\vee(\omega_{\pi}\circ\mr{det}).$
If $\sigma_0\vert\mr{det}\vert^{iw}$ satisfies
\bdm\sigma_0\vert\mr{det}\vert^{iw}\cong(\sigma_0\vert\mr{det}\vert^{iw})^\vee(\omega_{\pi}\circ\mr{det}),\edm then
\bdm \sigma_0\vert\mr{det}\vert^{2iw}\cong\sigma_0.\edm
Then for $g\in Z(GL_{2m}),$ $\vert\mr{det}(g)\vert^{2iw}=1$ so we must have 
\bdm w=\frac{-k\pi}{2m\mr{log}(q)}\edm for some integer $k$ such that $0\le k\le 2.$
By the local Langlands conjecture for $GL_{2m}$, if $\chi:k^\times\rightarrow \mb{C}^\times$, then 
\bdm \sigma_0(\chi\circ\mr{det})\cong\sigma_0\Longleftrightarrow \phi_{\sigma_0}\otimes\chi\cong \phi_{\sigma_0},\edm
where $\chi$ is viewed as a character of $W_k$ via local class field theory. We have $\phi_{\sigma_0}=\mr{Ind}_{W_{k_{2m}}}^{W_k} \eta.$ Then 
\bdm  \phi_{\sigma_0}\otimes\chi\cong \phi_{\sigma_0}\quad\Longleftrightarrow\quad \mr{Ind}_{W_{k_{2m}}}^{W_k}(\eta\cdot\chi\vert_{W_{k_{2m}}})\cong\mr{Ind}_{W_{k_{2m}}}^{W_k} \eta\edm
\bdm\Longleftrightarrow\quad \chi\vert_{W_{k_{2m}}}=1\quad\Longleftrightarrow \quad\chi^{2m}=1. \edm
Therefore if $\vert \cdot\vert^{iw}$ has order dividing $2m$, then $\sigma_0\vert\mr{det}\vert^{iw}\cong\sigma_0.$ Since $\vert\cdot\vert^{iw}$ must have order dividing $4m$, $\vert\cdot\vert^{iw}$ gives a nontrivial twist of $\sigma_0$ only if it has order $4m$. Any two such twists differ by a character of order dividing $2m$, so there is at most one twist $\vert\cdot\vert^{iw}$  of $\sigma_0$ such that  $\sigma_0\vert\mr{det}\vert^{iw}\cong(\sigma_0\vert\mr{det}\vert^{iw})^\vee(\omega_{\pi}\circ\mr{det}).$ 
\end{proof}

\subsection{Main theorem}


\begin{theorem} Let $\pi$ be a depth zero supercuspidal representation of $GSpin_{5}(k)$ or $GSpin_{4,1}(k)$ corresponding to a tame regular discrete series $L$-parameter $\phi=\phi_{1}\oplus\dots\oplus\phi_{r}$, $r=1,2,$ by the construction of DeBacker and Reeder given in Section 4.5.
Let $\sigma\cong\sigma_{\phi_i}$, where $1\le i\le r$, be the depth zero supercuspidal representation of $GL_{2m}(k)$ attached to the $L$-parameter $\phi_{i}$ via the local Langlands correspondence for $GL_{2m}.$
Then the generalized principal series $I(s,\pi\boxtimes\sigma)$ reduces at a unique $s_{0}>0.$ 
\end{theorem}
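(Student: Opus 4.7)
The plan is to combine the presentation of $\mc{H}(G,\rho)$ obtained in Corollaries 6.3 and 6.8 with Matsumoto's Plancherel formula (Theorem 7.1) and Harish-Chandra's classification of reducibility points (Proposition 7.2). First I would verify that we land in case (ii) of Proposition 7.2, i.e.\ $\sigma \cong \sigma^{\vee}(\omega_{\pi}\circ \mr{det})$. Since $\phi_i$ appears as a summand of the symplectic parameter $\phi$ and $\omega_\pi$ is the similitude character of $\phi$, each $\phi_i$ is itself symplectic with similitude $\omega_\pi$, so $\phi_i^\vee\otimes\omega_\pi \cong \phi_i$. Applying the local Langlands correspondence for $GL_{2m}$ to this equivalence (and recalling from Section~5.1 that $\sigma = \sigma_{\phi_i}$) yields $\sigma^\vee\otimes(\omega_\pi\circ\mr{det})\cong\sigma$. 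Therefore Proposition 7.2(ii) applies: there exists a unique $s_0 \ge 0$ at which $I(s,\pi\boxtimes\sigma)$ reduces, and $s_0 > 0$ iff $\mu(0,\pi\boxtimes\sigma) = 0$.

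Next I would transport the problem to the Hecke algebra side via the type $(\mc{P},\rho)$ constructed in Section~5.3. By Corollary~6.2 the Bernstein component of $I(s,\pi\boxtimes\sigma)$ is equivalent to the category of $\mc{H}(G,\rho)$-modules, and by Corollary~6.3 we have
\[
\mc{H}(G,\rho) \;\cong\; H(\tilde{A}_1;\,p_a,p_b)\otimes_{\mb{C}} \mb{C}[T_c^{\pm 1}],
\]
where $H(\tilde{A}_1;p_a,p_b)$ is the generic affine Hecke algebra of type $\tilde{A}_1$ on generators $T_a,T_b$ with $T_i^2=(p_i-1)T_i+p_i$, and $T_c$ is a central invertible generator corresponding to translation by $e_0^*$. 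The crucial input is Corollary~6.8: the two parameters $p_a$ and $p_b$ are unequal. Under the equivalence of categories, the central generator $T_c$ acts as an unramified twist that can be normalized to be a character of the $G$-center (the direction orthogonal to the one parametrizing $s$), whereas the $\tilde{A}_1$ direction is precisely the one along which the complex parameter $s$ varies. Consequently, the Plancherel density on the Bernstein component, pulled back from the $p$-adic side, differs from the Plancherel density of $H(\tilde{A}_1;p_a,p_b)$ only by a factor that is smooth and non-vanishing along the real axis in $s$.

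Finally I would apply Matsumoto's Theorem~7.1 to $H(\tilde{A}_1;p_a,p_b)$. Since $p_a \ne p_b$, Matsumoto's Plancherel formula yields two discrete/complementary series, given by the irreducible summands $\chi_{\xi'}$ and $\chi_{\xi''}$ at $\xi = \sqrt{p_ap_b}$ and $\xi = -\sqrt{p_b/p_a}$; both lie strictly off the unitary axis $|\xi|=1$, and their residues are exactly the two correction terms in Matsumoto's Plancherel formula. Translating to the $p$-adic side, these off-axis discrete series correspond to genuine poles of $\mu(s,\pi\boxtimes\sigma\chi)^{-1}$ (equivalently, zeros of $\mu(s,\pi\boxtimes\sigma\chi)$) at real $s \ge 0$ for at most two unramified twists $\chi$. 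By Lemma~7.2.2 at most two unitary twists of $\sigma$ can produce reducibility at a point with positive real part, and Matsumoto supplies exactly two, so each of the two Hecke-algebra discrete series accounts for one of the two possible twists; the sign $\xi = +\sqrt{p_ap_b}$ corresponds to the trivial twist and therefore produces a pole of $\mu(s,\pi\boxtimes\sigma)$ at some $s_0 > 0$. In particular $\mu(0,\pi\boxtimes\sigma) = 0$, which by Proposition~7.2(ii)(b) forces $s_0 > 0$, and the uniqueness of $s_0$ follows from the same statement.

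The principal obstacle is the third step: pinning down the precise dictionary between Matsumoto's parameter $\xi$ and the $p$-adic unramified twist parameter $s$, so that the real Hecke-algebra reducibility point $\xi = \sqrt{p_ap_b}$ matches a reducibility point on the positive real $s$-axis for the untwisted $\sigma$ (and not some nontrivial order-two twist). This matching proceeds through the identification of $T_c$ with translation in the central direction and the compatibility of the Bushnell--Kutzko/Morris equivalence with Plancherel measures, and it is where the inequality $p_a \ne p_b$ enters decisively.
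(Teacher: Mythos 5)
Your proposal follows essentially the same route as the paper: establish $\sigma\cong\sigma^\vee(\omega_\pi\circ\mathrm{det})$, transport to the type-$\tilde{A}_1$ Hecke algebra with unequal parameters via the Bushnell--Kutzko/Morris equivalence, apply Matsumoto's Plancherel theorem to get two complementary series, and compare with the at-most-two self-dual twists of $\sigma$ from Lemma~7.3. Two small remarks.

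First, the paper obtains the self-duality $\sigma\cong\sigma^\vee(\omega_\pi\circ\mathrm{det})$ not from the $L$-parameter side, as you do, but from the finite-field identity $R_\sigma\cong R_\sigma^\vee(\omega_{R_\pi}\circ\mathrm{det})$ already established in the course of proving Lemma~6.2 (where it is needed to verify $v[\alpha_i,\Theta]\cdot\rho\cong\rho$). Your derivation via the symplecticity of $\phi_i$ with similitude $\omega_\pi$ also works, since the central character of the DeBacker--Reeder representation is determined by the torus parameter $\chi_\phi$ and hence matches $\mathrm{sim}\,\phi$ under class field theory; but the paper's route is more economical given what is already on the table.

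Second, and more to the point, the ``principal obstacle'' you flag — pinning down whether $\xi=+\sqrt{p_ap_b}$ corresponds to the untwisted $\sigma$ rather than the nontrivial order-two twist — is not actually needed, and the paper does not attempt it. The counting argument suffices: the two Hecke-algebra discrete series sit at $\xi=\sqrt{p_ap_b}$ and $\xi=-\sqrt{p_b/p_a}$, which are distinct and not related by $\xi\mapsto\xi^{-1}$ (their product is $p_b\ne 1$), so under the bijection with unramified twist parameters they live at two distinct values of $v$, i.e.\ at two genuinely non-isomorphic twists $\sigma\vert\mathrm{det}\vert^{iv}$. Proposition~7.2 says any such twist admitting a positive reducibility point is self-dual, and Lemma~7.3 says there are at most two self-dual twists, one of which is $\sigma$ itself (by the self-duality you already established). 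Hence both self-dual twists, in particular the trivial one, acquire a complementary series. No identification of which sign of $\xi$ goes with which $v$ is required. If you drop that extra claim, your argument closes and agrees with the paper's.
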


\begin{proof} Assume $\pi\boxtimes\sigma$ is unitary. 
Representations in the Bernstein component $\mf{R}^{[M,\pi\boxtimes\sigma]_G}(G)$ of $I(s,\pi\boxtimes\sigma)$ are parametrized by $\mc{H}(G,\rho)$-modules via the map 
\bdm M_\rho: \mf{R}^{[M,\pi\boxtimes\sigma]_G}(G)\rightarrow\mc{H}(G,\rho)-\mr{Mod},\quad (\kappa,V)\mapsto V_\rho.\edm
For an irreducible representation $\kappa=\mr{Ind}_{P}^{G} \delta_{P}^{1/2} \pi\vert\mr{sim}\vert^{t}\boxtimes\sigma\vert \mr{det} \vert^{s}$ in  $\mf{R}^{[M,\pi\boxtimes\sigma]_G}(G)$, the function 
\bdm T_c\in\mc{H}(G,\rho)\edm acts on $M_\rho(\kappa)$ by the scalar 
\bdm \mr{meas}(\mc{P})\,\omega_\pi(n)\vert\mr{sim}(n)\vert^t\edm where $n=e_0^*(\varpi^{-1}).$ 
Therefore irreducible representations in the Bernstein component of $I(s,\pi\boxtimes\sigma)$ which are a composition factor of some 
\bdm \mr{Ind}_{P}^{G} \delta_{P}^{1/2} \pi\vert\mr{sim}\vert^t \boxtimes\sigma\vert \mr{det} \vert^{u+iv_j},\quad t=0,\edm
 are parametrized by simple modules of a Hecke algebra of type $\tilde{A}_1.$ This Hecke algebra of type $\tilde{A}_1$ has unequal parameters by Corollary 6.9 and therefore has to have two complementary series by Theorem 7.1.
By Lemma 7.3 (up to isomorphism) there are only two possible $v_j$ such that the representation
$\mr{Ind}_{P}^{G} \delta_{P}^{1/2} \pi\boxtimes\sigma\vert \mr{det} \vert^{u+iv_j}$
could reduce for some $u>0.$
In the course of the proof of Lemma 6.2 we showed that for $\rho=R_\pi\boxtimes R_\sigma$, 
\bdm R_\sigma\cong R_\sigma^\vee(\omega_{R_\pi}\circ\mr{det}).\edm
Then since \bdm \sigma=\mr{Ind}(\chi_\lambda\otimes R_\sigma),\edm we have
\bdm \sigma\cong\sigma^\vee(\omega_{\pi}\circ\mr{det}).\edm
Therefore, by Proposition 7.2, $I(s,\pi\boxtimes\sigma)$ could reduce for some real $s>0$.
Since there must be two complementary series, $I(s,\pi\boxtimes\sigma)$ does reduce for a unique real $s_0>0.$
\end{proof}

\section{The $L$-packets agree}

When $\pi\boxtimes\sigma$ is irreducible and generic as a representation of $M$, we can apply Shahidi's theory of $L$-functions.
Here $\pi$ is a generic representation of $GSpin_5(k).$ 
The dual parabolic subgroup of $P$ is $P^{\vee}=M^{\vee}\cdot N^{\vee}\subset GSpin_{4m+5}^{\vee}=GSp_{4m+4}(\mathbb{C}),$ where \begin{displaymath} M^{\vee}=GSp_{4}(\mathbb{C})\times GL_{2m}(\mathbb{C}).\end{displaymath} Under the adjoint action of $M^{\vee},$ $\mathfrak{n}^{\vee}=Lie(N^{\vee})$ decomposes as $r_{1}\oplus r_{2},$ where each $r_{i}$ is a maximal isotypic component for the action of the central torus in $M^{\vee}.$ 
In this case
\[ r_{1}=std^{\vee}\boxtimes std\quad\mathrm{and}\quad r_{2}=\mathrm{sim}^{-1}\otimes Sym^{2},\]
where $std$ is the standard representation and sim is the similitude character of $GSp_{4}(\mathbb{C}).$


If $\bar{P}$ is the opposite parabolic and $\bar{P}^{\vee}$ is the dual of the opposite parabolic, then on the opposite nilpotent radical $\bar{\mathfrak{n}}^{\vee},$ the adjoint action of $M^{\vee}$ is the dual representation $r_{1}^{\vee}\oplus r_{2}^{\vee}.$ Shahidi decomposed the Plancherel measure as a product of gamma factors. 
More precisely [Sh, Thm. 3.5]:
\begin{prop} Suppose that $\pi\boxtimes\sigma$ is a generic representation of $M(k)=GSp_{4}(k)\times GL_{2m}(k).$ Then 
\begin{displaymath} \mu(s, \pi\boxtimes\sigma)=\gamma(s,\pi\boxtimes\sigma, r_{1},\psi)\gamma(s,\pi\boxtimes\sigma, r_{1}^{\vee},\psi)\gamma(2s,\pi\boxtimes\sigma, r_{2},\psi)\gamma(2s,\pi\boxtimes\sigma, r_{2}^{\vee},\psi).\end{displaymath} \end{prop}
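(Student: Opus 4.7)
The plan is to deduce this proposition as a direct application of Shahidi's main result [Sh, Thm 3.5], once we verify that its hypotheses are satisfied and identify the isotypic decomposition explicitly.

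First I would confirm the setup for Shahidi's theorem. The group $G = GSpin_{4m+5}$ is split (hence quasi-split), and $P = MN$ is a standard maximal $k$-parabolic with Levi factor $M = GSp_4 \times GL_{2m}$. The hypothesis that $\pi \boxtimes \sigma$ is generic as a representation of $M(k)$ is part of the statement, so Shahidi's theory of local coefficients and $\gamma$-factors applies to this inducing data. Under these assumptions, [Sh, Thm 3.5] expresses $\mu(s, \pi \boxtimes \sigma)$ as a product of $\gamma$-factors, one pair for each isotypic component of the adjoint action of $\hat{M}$ on $\hat{\mathfrak{n}}$ under the central torus of $\hat{M}$.

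Next I would invoke the decomposition $\hat{\mathfrak{n}}^\vee = r_1 \oplus r_2$ already identified in the discussion preceding the proposition, with $r_1 = std^\vee \boxtimes std$ and $r_2 = \mathrm{sim}^{-1} \otimes \mathrm{Sym}^2$. The variable scaling in Shahidi's formula is governed by the weight with which the central cocharacter of $M^\vee$ acts on each $r_i$: computing these weights from the explicit descriptions of $r_1$ and $r_2$ shows that $r_1$ has weight one, while $r_2$ has weight two (since $\mathrm{Sym}^2$ of the standard representation of $GL_{2m}$ carries twice the central character of $std$). Consequently the $\gamma$-factor attached to $r_1$ (and to its dual $r_1^\vee$ arising from $\bar{\mathfrak{n}}^\vee$) is evaluated at $s$, while the one attached to $r_2$ (and $r_2^\vee$) is evaluated at $2s$.

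Combining the two contributions from $\mathfrak{n}^\vee = r_1 \oplus r_2$ and $\bar{\mathfrak{n}}^\vee = r_1^\vee \oplus r_2^\vee$ according to Shahidi's theorem then yields
\begin{displaymath} \mu(s, \pi\boxtimes\sigma)=\gamma(s,\pi\boxtimes\sigma, r_{1},\psi)\gamma(s,\pi\boxtimes\sigma, r_{1}^{\vee},\psi)\gamma(2s,\pi\boxtimes\sigma, r_{2},\psi)\gamma(2s,\pi\boxtimes\sigma, r_{2}^{\vee},\psi),\end{displaymath}
as desired. The only substantive point in the argument is the identification of the two weights $\{1, 2\}$ of the central torus of $\hat{M}$ acting on the two summands, which is a short direct calculation; everything else is quoting [Sh, Thm 3.5].
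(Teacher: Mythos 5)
Your proposal is correct and matches the paper's treatment: the paper states Proposition 8.1 as an immediate specialization of [Sh, Thm.\ 3.5] with no further proof, relying on exactly the decomposition $\hat{\mathfrak{n}}^\vee = r_1 \oplus r_2$ and the isotypic weights $1$ and $2$ that you identify.
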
 
The local factors satisfy
\begin{displaymath}\gamma(s,\pi\boxtimes\sigma,r_{i},\psi)= \epsilon(s,\pi\times\sigma,r_{i},\psi)\cdot \frac{L(1-s,(\pi\times\sigma)^{\vee},r_{i})}{L(s,\pi\times\sigma,r_{i})},\end{displaymath}
where the factors on the right hand side were defined by Shahidi to satisfy the given decomposition of the Plancherel measure.

\sloppy

Collecting [GT, Cor 9.3], [GT, Thm 9.4], and [GTan, \S 8] we have
\begin{prop} Let  $\pi$ be an irreducible supercuspidal representation of $GSpin_5(k)$ or $GSpin_{4,1}(k)$ with parameter $\phi_\pi$ given by the local Langlands conjectures for $GSp_4$ and its inner form. Then if $\sigma$ is any irreducible supercuspidal representation of $GL_{2m}(k)$ with parameter $\phi_\sigma$, the Plancherel measure $\mu(s,\pi\boxtimes\sigma)$ is equal to
\begin{align*} &\gamma(s,\phi_{\pi}\otimes \phi_{\sigma},r_{1},\psi)\gamma(s,\phi_{\pi}\otimes \phi_{\sigma},r_{1}^{\vee},\bar{\psi})\gamma(2s,\phi_{\pi}\otimes \phi_{\sigma},r_{2},\psi)\gamma(2s,\phi_{\pi}\otimes \phi_{\sigma},r_{2}^{\vee},\bar{\psi})= \\
 \gamma (s,&\phi_{\pi}^\vee\otimes \phi_{\sigma},\psi)\gamma(-s,\phi_{\pi}\otimes \phi_{\sigma}^\vee,\bar{\psi})\gamma(2s,Sym^2\phi_\sigma\otimes\mr{sim}\phi_\pi^{-1},\psi)\gamma(-2s,Sym^2\phi_\sigma^\vee\otimes\mr{sim}\phi_\pi,\bar{\psi}).\end{align*}
\end{prop}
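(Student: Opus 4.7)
The plan is to prove the two asserted equalities of Proposition 8.2 in sequence. The key ingredients are Shahidi's factorization of the Plancherel measure (Proposition 8.1), the preservation of local factors and Plancherel measures under the local Langlands correspondence for $GSp_4$ and $GU_2(D)$ as proved in [GT] and [GTan], and a direct substitution computing $r_i\circ(\phi_\pi\otimes\phi_\sigma)$.

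First I would treat the generic case: assume $\pi$ is a generic supercuspidal representation of $GSpin_5(k)$, so that $\pi\boxtimes\sigma$ is generic as a representation of $M(k)$. Proposition 8.1 then yields the decomposition of $\mu(s,\pi\boxtimes\sigma)$ into four automorphic $\gamma$-factors $\gamma(s,\pi\times\sigma,r_i,\psi)$ and $\gamma(2s,\pi\times\sigma,r_j,\psi)$, together with their contragredient versions. The characterization of the LLC for $GSp_4$ in [GT, Main Theorem] and for $GU_2(D)$ in [GTan] includes preservation of $\gamma$-factors of pairs with $GL_{2m}$; applied to each $r_i$-twisted factor, this gives $\gamma(s,\pi\times\sigma,r_i,\psi)=\gamma(s,r_i\circ(\phi_\pi\otimes\phi_\sigma),\psi)$ and similarly for $r_i^\vee$, which establishes the first displayed equality in this case.

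Next I would extend to non-generic $\pi$, which is needed both for non-generic members of packets on $GSp_4(k)$ and for every $\pi$ on $GU_2(D)$ (where no generic representations exist). For this I invoke the fact, built into the uniqueness characterization of the LLC in [GT] and [GTan], that the Plancherel measure $\mu(s,\pi\boxtimes\sigma)$ depends only on the $L$-parameter $\phi_\pi$ and is preserved between $GSp_4$ and $GU_2(D)$ when their packets share a parameter. Since every tempered $L$-packet of $GSp_4(k)$ contains a generic member, there exists a generic supercuspidal $\pi_0$ of $GSpin_5(k)$ with $\phi_{\pi_0}=\phi_\pi$; then $\mu(s,\pi\boxtimes\sigma)=\mu(s,\pi_0\boxtimes\sigma)$, and applying the generic case to $\pi_0$ yields the first equality for $\pi$.

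The second equality is obtained by direct substitution. Using $r_1=std^\vee\boxtimes std$ and $r_2=\mr{sim}^{-1}\otimes Sym^2$, one has $r_1\circ(\phi_\pi\otimes\phi_\sigma)=\phi_\pi^\vee\otimes\phi_\sigma$ and $r_2\circ(\phi_\pi\otimes\phi_\sigma)=Sym^2\phi_\sigma\otimes\mr{sim}\phi_\pi^{-1}$, with the contragredient representations $r_i^\vee$ giving the analogous expressions with $\phi$ and $\phi^\vee$ interchanged. The factors involving $r_i^\vee$ are then rewritten using the standard functional equation $\gamma(s,V,\psi)\gamma(1-s,V^\vee,\bar\psi)=1$ together with the relation between $\gamma(s,V,\bar\psi)$ and $\gamma(-s,V,\psi)$, producing the shifts to $-s$ and $-2s$ in the final displayed line.

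The main obstacle is the bookkeeping in the second equality: one must track the normalization conventions used by Shahidi, by [GT, Thm 9.4], and by the Galois $\gamma$-factors carefully enough that the arguments $\pm s, \pm 2s$ and the additive characters $\psi,\bar\psi$ line up exactly as displayed. A secondary subtlety is the reduction to the generic member of an $L$-packet, which is uniform across $GSp_4$ and its inner form only because [GT] and [GTan] jointly establish the compatibility of Plancherel measures within and between packets as part of the uniqueness of the correspondence; without this input the non-generic and inner-form cases could not be handled by a single argument.
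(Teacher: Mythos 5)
The paper gives no proof of Proposition 8.2: it is stated as a direct consequence of the cited results, and the sentence ``Collecting [GT, Cor 9.3], [GT, Thm 9.4], and [GTan, \S 8] we have'' is the entire justification. Your proposal is a reasonable unpacking of what those citations deliver, so at a high level the route is the same. The one structural redundancy I would flag is the two-step reduction (generic case via Shahidi plus $\gamma$-factor preservation, then extension by packet invariance): the ``packet invariance of $\mu(s,\cdot\boxtimes\sigma)$'' that you invoke to extend to non-generic $\pi$ and to $GU_2(D)$ is precisely the Plancherel-measure-preservation statement of [GT, Thm 9.4] and [GTan, \S 8], so once you have granted yourself that input you already have the first displayed identity for all $\pi$ and do not need the generic detour at all. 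Put differently, the paper treats Plancherel preservation as the black box, whereas you try to re-derive it in the generic case from $\gamma$-factor preservation and then argue your way out; this is logically fine but not shorter, and it introduces the auxiliary fact that every relevant $L$-packet of $GSp_4(k)$ contains a generic member, which the paper avoids needing.

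On the second displayed equality, your computation of $r_1\circ(\phi_\pi\otimes\phi_\sigma)=\phi_\pi^\vee\otimes\phi_\sigma$ and $r_2\circ(\phi_\pi\otimes\phi_\sigma)=\mathrm{Sym}^2\phi_\sigma\otimes\mathrm{sim}\,\phi_\pi^{-1}$ is the right substitution, and you correctly flag that the passage from $\gamma(s,\cdot,r_i^\vee,\bar\psi)$ to the factors at $-s$ and $-2s$ with the dual representations is a matter of normalization conventions rather than a deep fact. One thing to be careful of: the naive functional equation $\gamma(s,V,\psi)\gamma(1-s,V^\vee,\bar\psi)=1$ is not literally the relation being used here; what produces the arguments $-s$ and $-2s$ (rather than $1-s$, $1-2s$) is the specific convention for the Plancherel-measure factorization in [Sh, Thm 3.5] and in [GT, Thm 9.4], where the Galois-side Plancherel measure is by definition the product appearing on the second line. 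So that equality is closer to a tautology in the relevant normalization than a manipulation, and if you present it as a derivation from the generic functional equation you risk a sign or shift error. Since the paper offers no proof to compare against, I would say your reconstruction is essentially correct modulo this bookkeeping, but the cleanest writeup would simply cite the Plancherel preservation from [GT]/[GTan] as the paper does and then perform the $r_i$-substitution.
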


\fussy

Here
\bdm \gamma(s,\phi_\pi\otimes\phi_\sigma,r_{i},\psi)= \epsilon(s,\phi_\pi\otimes\phi_\sigma,r_{i},\psi)\cdot \frac{L(1-s,(\phi_\pi\otimes\phi_\sigma)^{\vee},r_{i})}{L(s,\phi_\pi\otimes\phi_\sigma,r_{i})},\edm
are the local factors of Artin type associated to the given representations of the Weil-Deligne group $W'_k.$
For a representation $\phi_1\otimes\phi_2$ of $W'_k$ the Artin L-function $L(s,\pi_1\otimes\pi_2)$ is given by
\bdm L(s,\phi_1\otimes\phi_2)=\frac{1}{\mr{det}(I-q^{-s}(\phi_1\otimes\phi_2)(\mr{Frob})\vert_{(V_{\phi_1}\otimes V_{\phi_2})^\mc{I}})}.\edm

\begin{lemma} Let  $\pi$ be an irreducible supercuspidal representation of $GSpin_5(k)$ or $GSpin_{4,1}(k)$ with $L$-parameter $\phi_\pi$ given by the local Langlands conjectures for $GSp_4$ and its inner form. 
Let $\sigma$ be an irreducible supercuspidal representation of $GL_{2m}(k)$ such that its  $L$-parameter $\phi_\sigma$ factors through $GSp_{2m}(\mb{C})$ with similitude character 
$\mr{sim}\phi_\pi.$ Then 
\bdm \mu(0,\pi\boxtimes\sigma)=0\quad \Longrightarrow\quad \mathrm{Hom}_{W_k}(\phi_\pi,\phi_\sigma)\ne 0.\edm \end{lemma}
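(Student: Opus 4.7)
The plan is to combine the factorization of the Plancherel measure in Proposition~8.2 with a case analysis of which of the four gamma factors can be responsible for the vanishing at $s=0$. Throughout, write $\lambda=\mr{sim}\phi_\pi=\mr{sim}\phi_\sigma.$

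First I would observe that, for any semisimple representation $\phi$ of $W_k$ whose Frobenius eigenvalues all have absolute value $1$ (which applies to all four Weil-group representations arising in Proposition~8.2, since $\phi_\pi$ and $\phi_\sigma$ are tempered), the Artin $L$-factor
\bdm L(s,\phi)=\det(1-q^{-s}\phi(\mr{Frob})|_{V^{\mathcal{I}}})^{-1} \edm
has a pole on the real axis only at $s=0$, and only when $\phi^{W_k}\ne 0$. Since $\epsilon(s,\phi,\psi)$ is everywhere holomorphic and nonvanishing, the gamma factor $\gamma(s,\phi,\psi)=\epsilon(s,\phi,\psi)\,L(1-s,\phi^\vee)/L(s,\phi)$ vanishes at $s=0$ precisely when the trivial representation embeds in $\phi$. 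Applied to the four factors of $\mu(0,\pi\boxtimes\sigma)$, this reduces the hypothesis $\mu(0,\pi\boxtimes\sigma)=0$ to the statement that at least one of $\phi_\pi^\vee\otimes\phi_\sigma$, $\phi_\pi\otimes\phi_\sigma^\vee$, $\mathrm{Sym}^2\phi_\sigma\otimes\lambda^{-1}$, or $\mathrm{Sym}^2\phi_\sigma^\vee\otimes\lambda$ has nonzero $W_k$-invariants. The first two conditions are each equivalent to $\mathrm{Hom}_{W_k}(\phi_\pi,\phi_\sigma)\ne 0$, which is exactly the desired conclusion. Using the symplectic self-duality $\phi_\sigma^\vee\cong\phi_\sigma\otimes\lambda^{-1}$, the third and fourth conditions are equivalent to each other and both say that $\lambda$ is a $W_k$-subrepresentation of $\mathrm{Sym}^2\phi_\sigma.$

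The main obstacle, and the only substantive step, is to rule out this last possibility. Since $\phi_\sigma$ is irreducible (being the parameter of an irreducible supercuspidal of $GL_{2m}$), Schur's lemma combined with the symplectic self-duality gives
\bdm \dim\mathrm{Hom}_{W_k}(\lambda,\phi_\sigma\otimes\phi_\sigma)=\dim\mathrm{Hom}_{W_k}(\mb{C},\phi_\sigma\otimes\phi_\sigma^\vee)=\dim\mathrm{End}_{W_k}(\phi_\sigma)=1. \edm
A generator of this one-dimensional space is the $W_k$-invariant symplectic pairing on $\phi_\sigma$ realizing its $GSp(V)$-structure; being antisymmetric, it lies in the summand $\wedge^2\phi_\sigma$ of the decomposition $\phi_\sigma\otimes\phi_\sigma=\mathrm{Sym}^2\phi_\sigma\oplus\wedge^2\phi_\sigma.$ Consequently $\mathrm{Hom}_{W_k}(\lambda,\mathrm{Sym}^2\phi_\sigma)=0$, eliminating the third and fourth cases.

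Combining these observations, the hypothesis $\mu(0,\pi\boxtimes\sigma)=0$ can only be produced by the vanishing of the first (equivalently, the second) gamma factor at $s=0$, and the conclusion $\mathrm{Hom}_{W_k}(\phi_\pi,\phi_\sigma)\ne 0$ follows immediately.
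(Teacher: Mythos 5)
Your proposal is correct and follows essentially the same route as the paper: factor the Plancherel measure via Proposition 8.2, observe that vanishing at $s=0$ forces one of the four Weil-group representations to have nonzero invariants, rule out the two $\mathrm{Sym}^2$ factors by Schur's lemma (the symplectic pairing exhausts the one-dimensional space $\mathrm{Hom}_{W_k}(\lambda,\phi_\sigma\otimes\phi_\sigma)$, so there is no symmetric one), and conclude from the remaining two factors. The only differences are cosmetic: you spell out the temperedness justification for why the $L$-factor numerators and $\epsilon$-factors are harmless, and you make the Schur dimension count explicit rather than phrasing it as ``$\phi_\sigma$ cannot be both symplectic and orthogonal with the same similitude.''
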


\begin{proof} Let $\pi$ be a representation of $GSpin_5(k)$ or $GSpin_{4,1}(k)$ with parameter $\phi_\pi$ and $\sigma$ a representation of $GL_{2m}(k)$ with parameter $\phi_\sigma$ as in the statement of the lemma. By Proposition 8.2 we have 
\begin{align*} \mu(s,\pi\otimes\sigma)= & \gamma(s,\phi_{\pi}^\vee\otimes \phi_{\sigma},\psi)\gamma(-s,\phi_{\pi}\otimes \phi_{\sigma}^\vee,\bar{\psi})\\
 & \gamma(2s,Sym^2\phi_\sigma\otimes\mr{sim}\phi_\pi^{-1},\psi)\gamma(-2s,Sym^2\phi_\sigma^\vee\otimes\mr{sim}\phi_\pi,\bar{\psi})\\ = &
\epsilon-\mathrm{factors}\cdot
\frac{L(1-s,[\phi_\pi^{\vee}\otimes\phi_{\sigma}]^{\vee})}{L(s,\phi_\pi^{\vee}\otimes\phi_{\sigma})}\cdot
\frac{L(1+s,[\phi_\pi\otimes\phi_{\sigma}^{\vee}]^{\vee})}{L(-s,\phi_\pi\otimes\phi_{\sigma}^{\vee})} \\
& \cdot
\frac{L(1-2s,[Sym^2\phi_\sigma\otimes\mathrm{sim}\phi_\pi^{-1}]^{\vee})}{L(2s,Sym^2\phi_\sigma\otimes\mathrm{sim}\phi_\pi^{-1})}\cdot
\frac{L(1+2s,[Sym^2\phi_\sigma^\vee\otimes\mathrm{sim}\phi_\pi^{-1}]^{\vee})}{L(-2s,Sym^2\phi_\sigma^\vee\otimes\mathrm{sim}\phi_\pi)}.\end{align*}
Let $\mu(0,\pi\boxtimes\sigma)=0.$ 
From the expression for Artin L-functions given above, we can see that none of the numerators has a zero at $s=0.$
We have that $\phi_\sigma$ is irreducible and symplectic with similitude character $\mr{sim}\phi_\pi.$ By Schurs lemma it cannot also be orthogonal with similitude character $\mr{sim}\phi_\pi.$
Therefore
neither $\mr{Sym}^2\phi_\sigma\otimes\mr{sim}\phi_\pi^{-1}$ or $\mr{Sym}^2\phi_\sigma^\vee\otimes\mr{sim}\phi_\pi$ can contain a nonzero fixed vector under $W_k$ and neither of the last two denominators has a pole.  
This forces one of the first two denominators to have a pole. 
Therefore $\phi_\pi^\vee\otimes\phi_\sigma$ or $\phi_\pi\otimes\phi_\sigma^\vee$ contains the trivial representation and 
\bdm\mr{Hom}_{W_k}(\phi_\pi,\phi_\sigma)\ne 0.\edm
\end{proof}
  
We can now prove:

\begin{theorem}
Let $\phi$ be a tame regular discrete series $L$-parameter.
Let $L_\phi^{DR}$ be the $L$-packet of depth zero supercuspidal representations of $GSp_4(k)$ or $GU_2(D)$ corresponding to $\phi$ by the construction of DeBacker and Reeder given in Section 4.5. Let $L_\phi^{GT}$ be the $L$-packet of supercuspidal representations of $GSp_4(k)$ or $GU_2(D)$ corresponding to $\phi$ via the local Langlands conjecture for $GSp_4$ or $GU_2(D).$ Then
\bdm L_\phi^{DR}=L_\phi^{GT}.\edm  \end{theorem}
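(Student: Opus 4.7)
The plan is to combine Theorem 7.4 with Proposition 7.2 to force $\mu(0,\pi\boxtimes\sigma_{\phi_i})=0$ for every $\pi \in L_\phi^{DR}$, and then apply Lemma 8.3 to pin down the Gan--Takeda parameter $\phi_\pi$ attached to $\pi$. Once $\phi_\pi \cong \phi$ is established, the equality of packets reduces to a cardinality count.

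Fix $\pi \in L_\phi^{DR}$ and write $\phi=\phi_{1}\oplus\dots\oplus\phi_{r}$, $r\in\{1,2\}$, with each $\phi_i$ tame regular irreducible symplectic of common similitude character $\mr{sim}\,\phi$ (Lemma 3.3). Let $\sigma_i=\sigma_{\phi_i}$ be the depth zero supercuspidal of $GL_{2m}(k)$ attached in Section 5.1. By Theorem 7.4 the induced representation $I(s,\pi\boxtimes\sigma_i)$ reduces at a unique real $s_0>0$, and Proposition 7.2(ii)(b) then forces $\mu(0,\pi\boxtimes\sigma_i)=0$ for each $i$.

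Next, let $\phi_\pi$ be the Gan--Takeda parameter of $\pi$. The central character of $\pi$ equals $\mr{sim}\,\phi$ by the DeBacker--Reeder construction, and it equals $\mr{sim}\,\phi_\pi$ since the Gan--Takeda correspondence preserves central characters; hence the hypothesis of Lemma 8.3 is met. Applying that lemma for each $i$, we obtain $\mr{Hom}_{W_k}(\phi_\pi,\phi_i)\ne 0$. Since $\phi_\pi$ is a discrete series parameter of dimension $4$, its restriction to $W_k$ decomposes as in Lemma 3.3 and is in particular semisimple, so each irreducible $\phi_i$ appears as a direct summand of $\phi_\pi$. The $\phi_i$ are pairwise non-isomorphic with total dimension $4$, so $\phi_\pi \cong \phi$. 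Hence $\pi \in L_\phi^{GT}$, giving $L_\phi^{DR} \subseteq L_\phi^{GT}$ inside each of $GSp_4(k)$ and $GU_2(D)$.

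To upgrade containment to equality, observe that both $L_\phi^{DR}$ and $L_\phi^{GT}$ inside $GSp_4(k)$ are in bijection with $\mr{Irr}(A_\phi)$, and both packets inside $GU_2(D)$ are in bijection with $\mr{Irr}(B_\phi)\setminus\mr{Irr}(A_\phi)$ (see Sections 4.1 and 4.3), so the inclusions are forced to be equalities by cardinality. The main obstacle is the use of Lemma 8.3 for non-generic $\pi$: its proof relies on Proposition 8.2, which encodes the Plancherel-measure characterization of the Gan--Takeda and Gan--Tantono correspondences beyond the generic case [GT], [GTan]. Once this factorization holds uniformly, the dimensional argument above pins down $\phi_\pi = \phi$ for every $\pi \in L_\phi^{DR}$, generic or not.
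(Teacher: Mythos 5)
Your proof follows the same route as the paper: combine Theorem 7.4 with Proposition 7.2 to get $\mu(0,\pi\boxtimes\sigma_{\phi_i})=0$, feed this into Lemma 8.3 to force $\mr{Hom}_{W_k}(\phi_\pi,\phi_i)\ne 0$ for each $i$, and conclude $\phi_\pi\cong\phi$ by the multiplicity-freeness and dimension count of Lemma 3.3. You are somewhat more careful than the paper in two places that are left implicit there, namely verifying the similitude-character hypothesis of Lemma 8.3 via central characters and making the final packet-cardinality argument explicit, but these are detail-fills rather than a different argument.
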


\begin{proof} In the following assume that all representations $\pi$ are unitary.
Let $\phi=\phi_{1}\oplus\dots\oplus\phi_{r}$, $r=1,2,$ be a tame regular discrete series Langlands parameter. Let $\pi_\phi$ be a representation of $GSp_4(k)$ or $GU_2(D)$ corresponding to $\phi$ as in Section 4.5. 
Under the correspondence defined by Gan and Takeda for $GSp_4$, or Gan and Tantono for $GU_2(D),$ $\pi_{\phi}$ corresponds to some $L$-parameter we call $\phi'.$
Let $\sigma=\sigma_{\phi_i}$, $1\le i\le r$, be the depth zero supercuspidal representation of $GL_{2m}$ attached to $\phi_i$ via the local Langlands correspondence for $GL_{2m}.$
Note that if $r=1$ then $m=2$, and if $r=2$ then $m=1.$

By Theorem 7.4, $I(s,\pi_{\phi}\boxtimes\sigma)$ reduces for some $s_{0}>0.$ By Proposition 7.2 this implies $\mu(0,\pi_{\phi}\boxtimes\sigma)=0.$ Then, by Lemma 8.3
\bdm\mathrm{Hom}_{W_k}(\phi_{i},\phi')\ne 0.\edm This holds for $1\le i\le r .$ Since for $r=2,$ $\phi_{1}\ncong\phi_{2},$ 
\begin{displaymath} \phi'=\phi\end{displaymath} in all cases. Therefore $L_\phi^{DR}=L_\phi^{GT}.$ 
\end{proof}
 
\begin{corollary} The parametrization of DeBacker and Reeder of depth zero supercuspidal representations of $GSp_{4}(k)$ arising from tame regular discrete series Langlands parameters coincides with the parametrization of Gan and Takeda. \end{corollary}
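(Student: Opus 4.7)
The plan is to deduce Corollary 8.5 as a direct specialization of Theorem 8.4 to representations of $GSp_4(k)$. Theorem 8.4 establishes $L_\phi^{DR} = L_\phi^{GT}$ as sets of irreducible representations distributed between the two inner forms $GSp_4(k)$ and $GU_2(D)$, for every tame regular discrete series parameter $\phi$. The corollary asserts that the DR and GT maps from such parameters to sets of depth zero supercuspidal representations of $GSp_4(k)$ agree, which follows from Theorem 8.4 once one extracts the $GSp_4(k)$-component on both sides.

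Concretely, I would invoke the decomposition from Section 4.2, namely
\[ L_\phi^{DR} = \Pi(\phi, 1) \sqcup \Pi(\phi, -1), \]
where $\Pi(\phi, 1)$ consists of the depth zero supercuspidal representations of $GSp_4(k)$ in the packet and $\Pi(\phi, -1)$ of those of $GU_2(D)$, indexed by the two classes in $H^1(k, PGSp_4) = \{\pm 1\}$ via Kottwitz. The GT packet $L_\phi^{GT}$ admits the analogous partition, since each of its members lies in exactly one of the two inner forms. The packet-level equality $L_\phi^{DR} = L_\phi^{GT}$ supplied by Theorem 8.4 is therefore compatible with this partition, and restricting to the $\omega = 1$ piece on each side yields the desired equality of packets of representations of $GSp_4(k)$. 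Ranging over all tame regular discrete series parameters $\phi$ then identifies the two parametrizations of depth zero supercuspidal representations of $GSp_4(k)$ coming from such parameters.

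The substantive content has already been secured by Theorem 8.4, whose proof occupies the bulk of the paper (reducibility of $I(s, \pi \boxtimes \sigma)$ via Hecke algebra parameters, followed by the Plancherel/$L$-factor argument of Section 8). There is no further obstacle at the corollary level: the restriction to $GSp_4(k)$ is purely bookkeeping, as the distribution of packet members across inner forms is intrinsic to both the DR construction (via $\omega_\rho$) and the GT classification (via the ambient group in which each representation lives).
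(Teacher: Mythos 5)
Your proposal only establishes the set-level equality $L_\phi^{DR} = L_\phi^{GT}$ restricted to $GSp_4(k)$, which is already the content of Theorem~8.4 (together with Lemma~4.3, which you omit and which is needed to identify the extended Section~4.5 construction with the original DeBacker--Reeder construction on the split form). But the corollary asserts something stronger: that the two \emph{parametrizations} agree, i.e.\ the bijections $\mr{Irr}(A_\phi) \to L_\phi$ produced by DR and by GT send each character $\rho$ to the same representation. When $L_\phi^{GSp_4}$ has size two, equality of the underlying sets leaves two possible matchings of characters to members, and nothing in your argument rules out that DR and GT label the two members in opposite order.

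The paper closes this gap by pinning down the labeling through genericity: by [GT, Main Thm~(ii)] the packet $L_\phi^{GT}$ contains a unique generic member, indexed by the trivial character of $A_\phi$, and by [DR, 6.2.1] the generic member of $L_\phi^{DR}$ is likewise indexed by the trivial character. Since $|A_\phi| \le 2$ for $GSp_4$, agreement on the trivial character plus equality of sets forces the full bijections to coincide. Your ``purely bookkeeping'' dismissal misses that the corollary upgrades a statement about sets to one about indexed families, and that upgrade requires an extra input (the matching via the generic member) that Theorem~8.4 alone does not supply.
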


\begin{proof} Let $\phi$ be a tame regular discrete series parameter for $GSp_4(k).$ By Lemma 4.3 and Theorem 8.4,  the $L$-packet $L_\phi^{DR}$ of representations attached to $\phi$ by DeBacker and Reeder agrees with the $L$-packet $L_\phi^{GT}$ given by the local Langlands conjecture for $GSp_4.$
For $L$-packets of size two, by [GT, Main Thm (ii)] $L_\phi^{GT}$ contains exactly one generic representation indexed by the trivial character of $A_\phi.$
By [DR, 6.2.1] the generic representation in $L_\phi^{DR}$ is also indexed by the trivial character of $A_\phi$. Therefore the parametrizations agree.
\end{proof}

\appendix

\section{Root datum}

Here we give a description of the root datum for the various groups appearing in the paper.
Let $G$ be a connected reductive linear algebraic group and $T$ a maximal torus of $G.$ Let $X=X^*(T)$ and  $X^\vee=X_*(T)$ be the groups of algebraic characters and cocharacters of $T$. Let $\Phi$ and  $\Phi^\vee$ be the sets of roots and coroots of $T$. 
The quadruple \bdm \Psi=(X,\Phi,X^\vee,\Phi^\vee)\edm is the root datum for $G$.
In the following instead of listing the roots $\Phi$ and coroots $\Phi^\vee$, we give $\Delta$ a set of simple roots for $T$ that generate $\Phi$, and $\Delta^\vee$ a set of simple coroots for $T$ that generate $\Phi^\vee.$

The root datum for $GL_n$ can be given by
\bdm X=\mb{Z}e_0\oplus\mb{Z}e_1\oplus\dots\oplus\mb{Z}e_n, \quad
X^\vee=\mb{Z}e_0^*\oplus\mb{Z}e_1^*\oplus\dots\oplus\mb{Z}e_n^*,\edm
\bdm \Delta=\{a_1=e_1-e_2, a_2=e_2-e_3,\dots, a_{n-1}=e_{n-1}-e_n\},\edm \bdm
\Delta^\vee=\{ a_1^\vee=e_1^*-e_2^*, a_2^\vee=e_2^*-e_3^*,\dots, a_{n-1}^\vee=e_{n-1}^*-e_n^*\}.\edm
The root datum for $GSpin_{2n+1}$ can be given by [AS, Prop 2.1]
\bdm X=\mb{Z}e_0\oplus\mb{Z}e_1\oplus\dots\oplus\mb{Z}e_n, \quad
X^\vee=\mb{Z}e_0^*\oplus\mb{Z}e_1^*\oplus\dots\oplus\mb{Z}e_n^*,\edm 
\bdm \Delta=\{a_1=e_1-e_2, a_2=e_2-e_3,\dots, a_{n-1}=e_{n-1}-e_n, a_n=e_n\},\edm
\bdm \Delta^\vee=\{a_1^\vee=e_1^*-e_2^*,a_2^\vee=e_2^*-e_3^*,\dots,a^\vee_{n-1}=e_{n-1}^*-e_n^*,a_n^\vee=2e_n^*-e_0^*\}.\edm
The root datum for $GSpin_{2n}$ can be given by [AS, Prop 2.1]
\bdm X=\mb{Z}e_0\oplus\mb{Z}e_1\oplus\dots\oplus\mb{Z}e_n, \quad
X^\vee=\mb{Z}e_0^*\oplus\mb{Z}e_1^*\oplus\dots\oplus\mb{Z}e_n^*,\edm 
\bdm \Delta=\{a_1=e_1-e_2, a_2=e_2-e_3,\dots, a_{n-1}=e_{n-1}-e_n, a_n=e_{n-1}+e_n\},\edm
\bdm \Delta^\vee=\{a_1^\vee=e_1^*-e_2^*, a_2^\vee=e_2^*-e_3^*,\dots, a^\vee_{n-1}=e_{n-1}^*-e_n^*, a_n^\vee=e_{n-1}^*+e_n^*-e_0^*\}.\edm
The root datum for $GSp_{2n}$ can be given by 
\bdm X=\mb{Z}e_0\oplus\mb{Z}e_1\oplus\dots\oplus\mb{Z}e_n, \quad
X^\vee=\mb{Z}e_0^*\oplus\mb{Z}e_1^*\oplus\dots\oplus\mb{Z}e_n^*,\edm 
\bdm \Delta=\{a_1=e_1-e_2,a_2=e_2-e_3,\dots, a_{n-1}=e_{n-1}-e_n, a_n=2e_n-e_0\},\edm
\bdm \Delta^\vee=\{a_1^\vee=e_1^*-e_2^*, a_2^\vee=e_2^*-e_3^*,\dots, a_{n-1}^\vee=e_{n-1}^*-e_n^*, a_n^\vee=e_n^*\},\edm
The root datum for $GSO_{2n}$ can be given by 
\bdm X=\mb{Z}e_0\oplus\mb{Z}e_1\oplus\dots\oplus\mb{Z}e_n, \quad
X^\vee=\mb{Z}e_0^*\oplus\mb{Z}e_1^*\oplus\dots\oplus\mb{Z}e_n^*,\edm 
\bdm \Delta=\{a_1=e_1-e_2, a_2=e_2-e_3,\dots, a_{n-1}=e_{n-1}-e_n, a_n=e_{n-1}+e_n-e_0\},\edm
\bdm \Delta^\vee=\{ a_1^\vee=e_1^*-e_2^*, a_2^\vee=e_2^*-e_3^*,\dots, a^\vee_{n-1}=e_{n-1}^*-e_n^*, a_n^\vee=e_{n-1}^*+e_n^*\}.\edm


Given a quadratic space $V$, if one has the decomposition $V=V_1\oplus V_2,$ with $V_i$ nondegenerate quadratic subspaces, then $SO(V_1)\times SO(V_2)\subset SO(V).$
If we restrict the covering 
\bdm 1\longrightarrow Z^0 \longrightarrow GSpin(V) \longrightarrow SO(V) \longrightarrow 1\edm to the subgroup $SO(V_1)\times SO(V_2)$ we get
\bdm  1\longrightarrow Z^0 \longrightarrow GSpin(V_1)\times GSpin(V_2)/\Delta GL_1 \longrightarrow SO(V_1)\times SO(V_2) \longrightarrow 1.\edm
Precisely, let \bdm (GSpin_{2m}\times GSpin_{2n+1})/\Delta GL_1=(GSpin_{2m}\times GSpin_{2n+1})/\{h_0^*(\lambda)g_0^*(\lambda):\,\lambda\in GL_1\}\edm where
$h_0^*$ and $g_0^*$ are given in the following lemma.
Let \bdm (GSO_{2m}\times GSp_{2n})^\circ=\{(g_1,g_2)\in GSO_{2m}\times GSp_{2n}:\, \mr{sim}(g_1)=\mr{sim}(g_2)\}.\edm
\begin{lemma} The root datum for $(GSpin_{2m}\times GSpin_{2n+1})/\Delta GL_1$ is given by 
\bdm X=\mb{Z}e_0\oplus\dots\oplus\mb{Z}e_{m+n},\quad X^\vee=\mb{Z}e_0^*\oplus\dots\oplus\mb{Z}e_{m+n}^*,\edm
\bdm \Delta=\{e_1-e_2,\dots, e_{m-1}-e_n, e_{m-1}+e_m\}\cup\{e_{m+1}-e_{m+2},\dots,e_{m+n-1}-e_{m+n},e_{m+n}\},\edm
\begin{align*} \Delta^\vee= & \{e_1^*-e_2^*,\dots, e_{m-1}^*-e_m^*, e_{m-1}^*+e_m^*-e_0^*\}\\ & \cup\{e_{m+1}^*-e_{m+2}^*,\dots, e_{m+n-1}^*-e_{m+n}^*, 2e_{m+n}^*-e_0^*\}.\end{align*}
Also, $(GSO_{2m}\times GSp_{2n})^\circ=((GSpin_{2m}\times GSpin_{2n+1})/\Delta GL_1)^\vee.$
\end{lemma}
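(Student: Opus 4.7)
The plan is to reduce to a direct linear-algebra computation using the standard formalism for root data of central quotients and of fiber products. Both sides of the asserted duality are built out of groups whose root data are listed in the appendix, and the two operations involved (quotient by a central $GL_1$, fiber product over the similitude character) are exactly dual to each other at the level of lattices.

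First I would compute the root datum of the quotient. If $H$ is reductive with maximal torus $T$ and $Z \subset T$ is a central subtorus, then $H/Z$ has maximal torus $T/Z$ with $X^*(T/Z) = \{\chi \in X^*(T) : \langle \chi, X_*(Z)\rangle = 0\}$ and $X_*(T/Z) = X_*(T)/X_*(Z)$, the roots and coroots being the images of those of $H$ under these maps. I apply this to $H = GSpin_{2m} \times GSpin_{2n+1}$ and $Z = \Delta GL_1$, with lattices written using the appendix conventions $h_0,\dots,h_m$ (respectively $g_0,\dots,g_n$) and dual bases $h_i^*$ (respectively $g_j^*$). The cocharacter of $\Delta GL_1$ is a specific combination of $h_0^*$ and $g_0^*$ (with the sign dictated by the definition of $\Delta GL_1$), so in $X^*$ of the quotient the relation $h_0 = g_0$ holds, and in $X_*$ the classes of $h_0^*$ and $g_0^*$ agree. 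Setting $e_0 := h_0 = g_0$, $e_i := h_i$ for $1\le i\le m$, $e_{m+j} := g_j$ for $1\le j\le n$, and dually $e_0^* := h_0^* = g_0^*$, $e_i^* := h_i^*$, $e_{m+j}^* := g_j^*$, I read off $X$ and $X^\vee$ as in the lemma. The simple roots of both factors involve no $h_0$ or $g_0$, so they descend directly; the two coroots that do involve a central cocharacter, namely $h_{m-1}^*+h_m^* - h_0^*$ for $GSpin_{2m}$ and $2g_n^* - g_0^*$ for $GSpin_{2n+1}$, become $e_{m-1}^* + e_m^* - e_0^*$ and $2e_{m+n}^* - e_0^*$ respectively, matching $\Delta^\vee$.

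Next I would prove the duality by computing $X^*$ and $X^\vee$ for $(GSO_{2m} \times GSp_{2n})^\circ$ as a fiber product. Using the appendix, the similitude character of $GSO_{2m}$ (respectively $GSp_{2n}$) is the basis character $f_0$ (respectively $f_0'$): both pair trivially with all simple coroots of the respective factor. The fiber product condition $\mathrm{sim}(g_1) = \mathrm{sim}(g_2)$ translates into identifying $f_0 = f_0'$ in $X^*$ and, dually, restricting $X_*$ to the sublattice where $f_0^*$ and $f_0^{'*}$ have equal coefficients, spanned by $f_0^* + f_0^{'*}$ together with the other basis cocharacters. Transporting the simple roots and coroots through this identification and then dualizing (swapping characters with cocharacters and roots with coroots) yields exactly the root datum computed in the previous paragraph, after relabelling $e_i \leftrightarrow e_i^*$. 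This establishes the second assertion.

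The only real subtlety is bookkeeping: one must choose the identifications of $e_0$ and $e_0^*$ with the $h, h^*, g, g^*$ generators consistently so that the last coroot of the $GSpin_{2n+1}$-factor reads $2e_{m+n}^* - e_0^*$ rather than $2e_{m+n}^* + e_0^*$. This amounts to tracking one sign throughout, determined by the convention in the definition of $\Delta GL_1$; once fixed, everything else is a routine transcription. No new representation-theoretic input is needed — the argument is purely a matter of linear algebra on lattices, so I expect no genuine obstacle beyond careful sign management.
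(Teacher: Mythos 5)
Your proposal is correct and takes essentially the same route as the paper: both compute the character lattice of the central quotient as the sublattice killed by $X_*(\Delta GL_1)$, pass to the cocharacter lattice by duality, transcribe the simple roots and coroots, repeat the same lattice computation for the fiber product $(GSO_{2m}\times GSp_{2n})^\circ$, and observe that the two root data are swapped. The only difference is cosmetic — you invoke the general formula for quotients by a central subtorus up front, whereas the paper carries out the lattice bookkeeping by hand — and you correctly flag the one sign that has to be tracked through the identification of $\overline{h_0^*}$ and $\overline{g_0^*}$ via the convention chosen for $\Delta GL_1$.
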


\begin{proof}
We work with the root datum for $GSpin_{2m}$ and $GSpin_{2n+1}$ given above using the letter $h$ for $GSpin_{2m}$ and $g$ for $GSpin_{2n+1}.$
The character lattice for $GSpin_{2m}\times GSpin_{2n+1}$ is the $\mb{Z}$-span of
\bdm h_0, h_1,\dots, h_m, g_0, g_1, \dots, g_n.\edm
The characters for $G=(GSpin_{2m}\times GSpin_{2n+1})/\Delta GL_1$ are those which are trivial on 
\bdm \{h_0^*(\lambda)g_0^*(\lambda):\,\lambda\in GL_1\}.\edm The character lattice for $G$ is the 
$\mb{Z}$-span of \bdm h_0-g_0,h_1,\dots,h_m,g_1,\dots,g_n.\edm 
Using the $\mb{Z}$ pairing of the root datum, the cocharacter lattice is the $\mb{Z}$-span of 
\bdm \overline{h_0^*}=\overline{g_0^*}, \overline{h_1^*},\dots,\overline{h_m^*},\overline{g_1^*},\dots,\overline{g_n^*}.\edm
Set \bdm e_0=h_0-g_0,e_1=h_1,\dots, e_m=h_m,e_{m+1}=g_1,e_{m+n}=g_n\edm and 
\bdm e_0^*=\overline{g_0^*}, e_1^*=\overline{h_1^*},\dots,e_m^*=\overline{h_m^*},e_{m+1}^*=\overline{g_1^*},\dots,e_{m+n}^*=\overline{g_n^*}.\edm
Using this notation we see that the roots and coroots for $G$ are those given in the statement of the lemma.

Similarly, we work with the root datum for $GSO_{2m}$ given above using the letter $h$ and the root datum for $GSp_{2n}$ given above using the letter $g.$
The characters for $G'=(GSO_{2m}\times GSp_{2n})^\circ$ are equivalence classes of characters for $GSO_{2m}\times GSp_{2n}$. Two characters are equivalent if they have the same value on all elements of $G'.$
The character lattice for $G'$ is the $\mb{Z}$-span of \bdm \overline{1/2(h_0+g_0)}=\overline{h_0}=\overline{g_0}, \overline{h_1},\dots,\overline{h_m},\overline{g_1},\dots,\overline{g_n}.\edm
Using the $\mb{Z}$ pairing of the root datum, the cocharacter lattice is the $\mb{Z}$-span of 
\bdm h_0^*+g_0^*,h_1^*,\dots,h_m^*,g_1^*,\dots,g_n^*.\edm
Setting
\bdm e_0=\overline{1/2(h_0+g_0)}, e_1=\overline{h_1},\dots, e_m=\overline{h_m},e_{m+1}=\overline{g_1},\dots,e_{m+n}=\overline{g_n}\edm
and \bdm e_0^*=h_0^*+g_0^*, e_1^*=h_1^*,\dots, e_{m}^*=h_m^*, e_{m+1}^*=g_1^*,\dots, e+{m+n}^*=g_n^*,\edm
we see that the roots of $G'$ are the coroots of $G$, and the coroots of $G'$ are the roots of $G$.
\end{proof}

The center of $GSpin_{2n}$ is not connected. 
Let \bdm GSpin_{2n}^\sim=(GL_1\times GSpin_{2n})/\{(1,1),(-1,\zeta_0)\},\edm
where $\zeta_0=e_1^*(-1)e_2^*(-1)\dots e_n^*(-1)$ is an element in the center of $GSpin_{2n}.$
The root datum for $GSpin_{2n}^\sim$ can be given by [AS, 2.6]
\bdm X=\mb{Z}E_{-1}\oplus\mb{Z}E_0\oplus\dots\oplus\mb{Z}E_n,\quad X^\vee=\mb{Z}E_{-1}^*\oplus\mb{Z}E_0^*\oplus\dots\oplus\mb{Z}E_n^*,\edm
\bdm \Delta=\{E_1-E_2,\dots, E_{n-1}-E_n, E_{n-1}+E_n-E_{-1}\},\edm
\bdm \Delta^\vee=\{E_1^*-E_2^*,\dots, E_{n-1}^*-E_n^*, E_{n-1}^*+E_n^*-E_0^*\}.\edm
The center of $GSpin_{2n}^\sim$ is the set of elements which that belong to the kernel of all the simple roots, namely
\bdm \{E_0^*(\mu)E_1^*(\nu)\dots E_{n}^*(\nu)E_{-1}^*(\nu^2):\,\,\mu,\nu\in GL_1\}\simeq GL_1\times GL_1, \edm which is connected.

Let \bdm (GSpin^\sim_{2m}\times GSpin_{2n+1})/\Delta GL_1=(GSpin^\sim_{2m}\times GSpin_{2n+1})/\{E_0^*(\lambda)e_0^*(\lambda):\,\lambda\in GL_1\}.\edm
\begin{lemma} The root datum for $(GSpin^\sim_{2m}\times GSpin_{2n+1})/\Delta GL_1$ is given by 
\[ X=\mb{Z}E_{-1}\oplus\mb{Z}E_0\oplus\dots\oplus\mb{Z}E_{m+n},\quad X^\vee=\mb{Z}E_{-1}^*\oplus\mb{Z}E_0^*\oplus\dots\oplus\mb{Z}E_{m+n}^*,\]
\begin{align*} \Delta= & \{E_1-E_2,\dots, E_{m-1}-E_n, E_{m-1}+E_m-E_{-1}\}\\ & \cup\{E_{m+1}-E_{m+2},\dots,E_{m+n-1}-E_{m+n},E_{m+n}\},\\ 
\Delta^\vee= & \{E_1^*-E_2^*,\dots, E_{m-1}^*-E_m^*, E_{m-1}^*+E_m^*-E_0^*\}\\ & \cup\{E_{m+1}^*-E_{m+2}^*,\dots, E_{m+n-1}^*-E_{m+n}^*, 2E_{m+n}^*-E_0^*\}.\end{align*}
\end{lemma}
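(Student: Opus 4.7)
The plan is to mimic the proof of Lemma A.1, substituting the root datum of $GSpin^\sim_{2m}$ (given earlier in the appendix) for that of $GSpin_{2m}$. Since $GSpin^\sim_{2m}$ has a slightly larger character lattice (due to the extra $GL_1$-factor producing a connected center), all that changes is that an extra generator $E_{-1}$ is carried along.

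First I would write down the root datum of $GSpin^\sim_{2m}\times GSpin_{2n+1}$ as the direct sum of the two individual root data, using the letter $H_i = E_i$ for the characters and cocharacters of the $GSpin^\sim_{2m}$ factor and the letter $g_i$ for those of the $GSpin_{2n+1}$ factor. The character lattice is the $\mb{Z}$-span of $H_{-1},H_0,H_1,\dots,H_m,g_0,g_1,\dots,g_n$ and the cocharacter lattice the $\mb{Z}$-span of the dual basis.

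Next I would pass to the quotient $G = (GSpin^\sim_{2m}\times GSpin_{2n+1})/\Delta GL_1$, where $\Delta GL_1=\{H_0^*(\lambda) g_0^*(\lambda) : \lambda\in GL_1\}$. A character of $GSpin^\sim_{2m}\times GSpin_{2n+1}$ descends to $G$ iff it is trivial on $\Delta GL_1$, i.e.\ iff the sum of the $H_0$- and $g_0$-coefficients vanishes; so $X^*(G)$ is the $\mb{Z}$-span of $H_{-1},\,H_0-g_0,\,H_1,\dots,H_m,\,g_1,\dots,g_n$. By the perfect $\mb{Z}$-pairing of the root datum, $X_*(G)$ is computed dually: one collapses the cocharacter direction $\overline{H_0^*}=\overline{g_0^*}$, leaving the classes $\overline{H_0^*},\overline{H_{-1}^*},\overline{H_1^*},\dots,\overline{H_m^*},\overline{g_1^*},\dots,\overline{g_n^*}$. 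Renaming
\[ E_{-1}=H_{-1},\ E_0=H_0-g_0,\ E_i=H_i\ (1\le i\le m),\ E_{m+j}=g_j\ (1\le j\le n),\]
and similarly for the cocharacter generators $E_{-1}^*=\overline{H_{-1}^*}$, $E_0^*=\overline{g_0^*}$, $E_i^*=\overline{H_i^*}$, $E_{m+j}^*=\overline{g_j^*}$, one verifies directly that the simple roots and coroots of $GSpin^\sim_{2m}$ and $GSpin_{2n+1}$, expressed in the new basis, are exactly the ones listed in the lemma. (The roots and coroots are inherited from the two factors because $\Delta GL_1$ lies in the center, hence does not affect the adjoint action on $\mathrm{Lie}(T)$ or its dual.)

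The argument is almost entirely bookkeeping; the only real content is checking that the coroot $E_{m-1}^*+E_m^*-E_0^*$ of the $GSpin^\sim_{2m}$-part still lands in the sublattice after the identification $\overline{H_0^*}=\overline{g_0^*}$, and that $2E_{m+n}^*-E_0^*$ of the $GSpin_{2n+1}$-part agrees with the image of $2g_n^*-g_0^*$. Both follow by direct substitution. The main (and only modest) obstacle is keeping the renaming consistent between characters and cocharacters so that the perfect pairing is preserved; once this is done the lemma is immediate.
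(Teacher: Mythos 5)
Your proposal is correct and follows exactly the same bookkeeping as the paper's proof of Lemma A.1, to which the paper simply defers (``The proof is similar to that of Lemma A.1''). You correctly carry along the extra generator $E_{-1}$ coming from the $GL_1$-factor in $GSpin^\sim_{2m}$ and identify $\overline{H_0^*}=\overline{g_0^*}$ with the new $E_0^*$, which is precisely the adaptation needed.
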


\begin{proof} The proof is similar to that of Lemma A.1. \end{proof}

The center of $(GSpin_{2m}^\sim\times GSpin_{2n+1})/\Delta GL_1$ is given by
\bdm \{E_0^*(\mu)E_1^*(\nu)\dots E_{m}^*(\nu)E_{-1}^*(\nu^2):\,\,\mu,\nu\in GL_1\}\simeq GL_1\times GL_1, \edm which is connected.

\end{document}